\newtheorem{theorem}{Theorem}[section]
\newtheorem{definition}[theorem]{Definition}
\newtheorem{algorithm}[theorem]{Algorithm}
\newtheorem{lemma}[theorem]{Lemma}
\newtheorem{proposition}[theorem]{Proposition}
\newtheorem{example}[theorem]{Example}
\newtheorem{remark}[theorem]{Remark}
\newenvironment{proof}{\noindent {\bf Proof.\ }}{$\blacksquare$\vspace{2ex}}
\newcommand{\lt}{\operatorname{lt} }
\newcommand{\lc}{\operatorname{lc} }
\newcommand{\lm}{\operatorname{lm} }
\newcommand{\Rem}{\operatorname{Rem} }
\newcommand{\Spoly}{\operatorname{Spoly} }
\newcommand{\Svect}{\operatorname{Svect} }
\newcommand{\lclm}{\operatorname{lclm} }
\newcommand{\Supp}{\operatorname{Supp} }
\newcommand{\Irr}{\operatorname{Irr} }
\newcommand{\topp}{\operatorname{top} }
\newcommand{\sh}{\operatorname{sh}}
\newcommand{\HF}{\operatorname{HF} }
\newcommand{\MM}{\mathcal{M} }
\newcommand{\NNN}{\mathcal{N} }
\newcommand{\GK}{\operatorname{GKdim}}
\newcommand{\NN}{\mathbb{N}}
\newcommand{\RR}{\mathbb{R}}
\newcommand{\QQ}{\mathbb{Q}}
\newcommand{\D}{\mathcal{D}}
\newcommand{\s}{\mathcal{S}}
\newcommand{\gbs}{Gr\"obner bases}
\newcommand{\gs}{Gr\"obner-Shirshov}
\newcommand{\gsb}{Gr\"obner-Shirshov basis}
\newcommand{\gsbs}{Gr\"obner-Shirshov bases}
\newcommand{\fg}{finitely generated}
\newcommand{\dd}{differential difference}
\newcommand{\Dd}{Differential difference}
\newcommand{\IFF}{if and only if }
\newcommand{\LQ}[2]{\operatorname{LQ}(#1,#2) }
\newcommand{\vect}[1]{\boldsymbol{#1}}
\newcommand{\tdeg}{\operatorname{tdeg}}
\begin{document}
\title{Gelfand-Kirillov Dimensions of Modules over Differential Difference Algebras}
\author{\small Yang Zhang and
Xiangui Zhao\\
\small Department of Mathematics, University of Manitoba\\
\small Winnipeg, MB R3T 2N2, Canada\\
\small yang.zhang@umanitoba.ca, \ \ xian.zhao@umanitoba.ca
}
\date{}

\maketitle

\begin{abstract}
Differential difference algebras are generalizations of polynomial algebras, quantum planes, and Ore extensions of automorphism type and of derivation type. In this paper, we investigate the Gelfand-Kirillov dimension of a finitely generated module over a differential difference algebra through a computational method: Gr\"obner-Shirshov basis method. We develop the Gr\"obner-Shirshov basis theory of differential difference algebras, and of finitely generated modules over differential difference algebras, respectively. Then, via Gr\"obner-Shirshov bases, we give algorithms for computing the Gelfand-Kirillov dimensions of cyclic modules and finitely generated modules over differential difference algebras.
\end{abstract}

\textbf{Keywords}: Gelfand-Kirillov dimension, Gr\"obner-Shirshov basis, Hilbert function.

\textbf{MSC 2010}:
 16P90, 
 16S36, 
 13P10, 
 13D40 

\section{Introduction}
Let $k$ be a field, $A$ be an associative $k$-algebra with identity $1$, and $M$ be a left $A$-module. Then the \emph{Gelfand-Kirillov dimension} of $M$ (\cite{Krause-Lenagan_2000}, Chapter 5) is defined  by
\[
\GK(M)=\sup_{V,F}\overline{\lim_{n\to\infty}}\log_n\dim_k(V^nF)
\]
where the supremum is taken over all finite dimensional subspaces $V$ of $A$ containing $1$ and all finite dimensional subspaces $F$ of $M$. Gelfand-Kirillov dimension is a very useful tool for investigating modules over noncommutative algebras. Basic properties and applications of Gelfand-Kirillov dimension can be found in \cite{Krause-Lenagan_2000}.

\gsb\   theory  is a powerful computational tool for both commutative and noncommutative algebras (see the survey \cite{BokutChenShumSurvey2010}, and more algebraic structures which admit \gsb\ theory can be found in, for example, dialgebras \cite{BokutChenLiu2010Dialgebra}, matabelian Lie algebras \cite{ChenChen2012Matabelian}, $L$-algebras \cite{BokutChenHuang2013Lalgebra}, semirings \cite{BokutChenMo2013Semiring}). For commutative algebras, the dimension of an algebraic variety can be efficiently computed by using \gsbs\ to compute the growth of the Hilbert function (or Hilbert polynomial)(see \cite{Becker-Weispfenning_1993}). The Gelfand-Kirillov dimension of a \fg\ module over a \fg\ algebra is also closely related to Hilbert function and thus it is possible to compute it  for some specific classes of noncommutative algebras by using \gsbs. For example, Bueso et al. \cite{Bueso+Jimenez+Jara_1997} computed the Gelfand-Kirillov dimension of a cyclic module over an almost commutative algebra, Torrecillas \cite{torrecillas1999gelfand} considered the Gelfand-Kirillov dimension of \fg\ graded modules over multi-graded \fg\ algebas, Li and Wu \cite{li2002noncommutative} extended this method to cyclic modules over solvable polynomial algebras (also known as PBW algebras), and Bueso et al. \cite{Bueso2001computing} extended it to \fg\ modules over PBW algebras.

Differential difference algebras were first defined by Mansfield and Szanto in \cite{mansfield2003elimination},
which arose from the calculation of symmetries of discrete systems. Differential difference algebras are generalizations of several classes of (skew) polynomial rings/algebras, e.g., commutative polynomial algebras, skew polynomials of derivation/automorphism type (\cite{McConnell-Robson2001}, Chapter 1) and the quantum plane (\cite{kassel1995quantum}, Chapter IV).
Mansfield and Szanto \cite{mansfield2003elimination} developed the \gsb\ theory (where they use the term \gbs\ instead) of \dd\ algebras by using a special kind of left admissible orderings,
which they called differential difference orderings.
In this paper, we  generalize the \gsb\ theory of \dd\ algebras to any left admissible ordering and develop the \gsb\ theory of \fg\ free modules over \dd\ algebras. By using the theory we develop in this paper, we compute the Gelfand-Kirillov dimensions of \fg\ modules over \dd\ algebras.

This paper is organized as follows. We give the definition and properties of \dd\ algebras in Section 2.  In Section 3, we generalize the main results of Mansfield and Szanto \cite{mansfield2003elimination} on \gsbs\ to \dd\ algebras with respect to \dd\ monomial orderings (see Definition \ref{Def_DD-ordering}). Then, in Section 4, we apply the theory we develop to compute the Gelfand-Kirillov dimension of a cyclic module over a \dd\ algebra. We develop the \gsb\ theory of \fg\ modules over \dd\ algebras in Section 5. Finally we investigate the Gelfand-Kirillov dimension of \fg\ modules over \dd\ algebras in Section 6.

\section{Preliminaries}

Throughout this paper, we assume that  $k$  is a field with characteristic $0$ and  all algebras  are unital  associative $k$-algebras.
A mapping $\delta$  on an $k$-algebra $A$ is called a \emph{$k$-derivation} (or only {\it derivation} for short) on $A$ provided that, for any $a,b\in A$ and $c\in k$, $\delta(ca+b)=c\delta(a)+\delta(b)$ and
$\delta(ab)=a\delta(b)+\delta(a)b$.
If $R\subseteq A$ is a subalgebra and $a_1,\ldots,a_p\in A$, where $p$ is a positive integer, then $R\langle a_1,\ldots, a_p\rangle$ denotes the subalgebra of $A$ generated by $R$ and $\{a_1,\ldots,a_p\}$.

First we recall the definition of \dd\ algebras.
\begin{definition}(cf., \cite{mansfield2003elimination})\label{Def_DD_alg}
An algebra $A$ is called \emph{a \dd\ algebra} of type $(m,n)$, $m,n\in\NN$, over a subalgebra $R\subseteq A$ if there exist elements
$S_1,\ldots,S_m,D_1,\ldots,D_n$ in $A$ such that
\begin{enumerate}[(i)]
\item the set
$\{S_1^{\alpha_1}\cdots   S_m^{\alpha_m}D_1^{\beta_1}\cdots D_n^{\beta_n}:\alpha_1,\ldots,\alpha_m,\beta_1,\ldots,\beta_n\in \NN\}$
 forms a basis for $A$ as a free left $R$-module.

\item $D_ir=rD_i+\delta_i(r)$ for any $1\leq i\leq n$ and $r\in R$, where
$\delta_i$ is a derivation on $R$.
\item $S_ir=\sigma_i^{-1}(r)S_i$, {or equivalently $rS_i=S_i\sigma_i(r)$}, for any $1\leq i\leq m$ and $r\in R$, where
 $\sigma_i$ is a $k$-algebra automorphism on the subalgebra $R\langle D_1, \dots, D_n \rangle\subseteq A$ such that  the restriction $\sigma_i|_R$ is a $k$-algebra automorphism on $R$ and $\sigma_i(D_j)=\sum_{l=1}^n a_{ijl}D_l,\ \ a_{ijl}\in R.$
\item $D_iS_j=S_j\sigma_j (D_i)$, $1\leq i\leq n,1\leq j\leq m$.
\item $S_iS_j=S_jS_i$, $ 1\leq i,j\leq m$; \ $D_{i'}D_{j'}=D_{j'}D_{i'}$, $1\leq i',j'\leq n$.
\item For any $1\leq i,j\leq n$ and $1\leq i',j'\leq m$, the composition
$
\delta_i\circ\delta_j=\delta_j\circ\delta_i$ and $
\sigma_{i'}\circ\sigma_{j'}=\sigma_{j'}\circ\sigma_{i'}.
$
\end{enumerate}
\end{definition}

{Let $\mathcal{D} = \{D_1, \dots, D_n\}$ and $\mathcal{S} = \{S_1, \dots, S_m\}$.} If $A$ is a \dd\ algebra over $R$ as defined above, we denote $A=R[\s,\D;\sigma,\delta]$. Denote $\s^{\alpha}\D^{\beta}=S_1^{\alpha_1}\cdots S_m^{\alpha_m}D_1^{\beta_1}\cdots D_n^{\beta_n}$ for $\alpha=(\alpha_1,\dots,\alpha_m)\in \NN^m$, $\beta=(\beta_1,\ldots,\beta_n)\in\NN^n$.

\begin{remark}
\label{rem_MonomialForm}\upshape
\Dd\ algebras can be also defined as iterated Ore extensions:
$$R[D_1;\delta_1] \cdots [D_n;\delta_n][S_1; \sigma_1]\cdots [S_m; \sigma_m]$$
with careful choices of $\sigma$ and $\delta$ to get conditions (v) and (vi) in the above definition. In the language of iterated Ore extensions, it is natural to take the set  $\{\D^{\beta}\s^{\alpha}:\alpha\in\NN^m, \beta\in\NN^n\}$ as a standard $R$-basis of $A$.
However, in Definition \ref{Def_DD_alg}, we take $\{\s^{\alpha}\D^{\beta}:\alpha\in\NN^m, \beta\in\NN^n\}$ rather than $\{\D^{\beta}\s^{\alpha}:\alpha\in\NN^m, \beta\in\NN^n\}$ as a standard $R$-basis of $A$ since the former has more advantages related to computational properties of $A$. For example, the usual degree-lexicographical ordering (Example \ref{Exam_DD-monomial_order}) works well as a left admissible ordering (which is essential to develop \gsb\ theory of $A$) with $\s^{\alpha}\D^{\beta}$, but not with $\D^{\beta}\s^{\alpha}$.


\end{remark}

Several well-known  classes of skew polynomial algebras are contained in the class of \dd\ algebras. But on the other hand,  a \dd \ algebra is not necessarily contained in some well-known classes of noncommutative algebras, for example,  algebras of solvable type \cite{Weispfenning1990non},  PBW extensions \cite{bell1988uniform}, and  G-algebras \cite{levandovskyy2003plural}, see Example \ref{exam_not-right-admissible} and Remark \ref{Rem_notPBW}.\begin{example}\upshape
(i) An Ore extension (also known as a skew polynomial ring, see Section 1.2 of \cite{McConnell-Robson2001}) $R[x;\sigma]$  of automorphism type over an algebra $R$ is a \dd\ algebra over $R$ of type $(1,0)$, while any Ore extension $R[x;\delta]$ of derivation type is a \dd\ algebra of type $(0,1)$.

(ii) Let $0\neq q\in k$ and $I_q$ be the two-sided ideal of the free associative algebra $k\langle x,y\rangle$ generated by the element $yx-qxy$. Then the quotient algebra
$
k_q[x,y]=k\langle x,y\rangle/I_q
$
is called a \emph{quantum plane} (\cite{kassel1995quantum}, Chapter IV).
It is clear that $k_q[x,y]$ is a \dd\ algebra over $k$ of type $(1,1)$.
\end{example}

\

Next we fix some notations.
For $\alpha=(\alpha_1,\dots,\alpha_m)\in \NN^m$, $\beta=(\beta_1,\ldots,\beta_n)\in\NN^n$ and $c\in R$, denote 
$\sigma^{\alpha}(c)=\sigma_1^{\alpha_1}\cdots \sigma_m^{\alpha_m}(c)$,  $|\alpha|=\alpha_1+\cdots+\alpha_m$. {As usual,  $D_i^0 = S_j^ 0 = 1$}, the identity of $R$.
An elements in $\mathcal M=\{\s^{\alpha}\D^{\beta}:\alpha\in\NN^m,\beta\in\NN^n\}$ is called a \emph{standard monomial}.
Moreover, set  $\MM_S=\{\s^{\alpha}:\alpha\in \NN^m\}$
and $\MM_{\D}=\{\D^{\beta}:\beta\in\NN^n\}$.
Let $u=\s^{\alpha}\D^{\beta}\in \MM$. Then the \emph{total degree} of $u$ is defined as $\tdeg(u)=|\alpha|+|\beta|$,
and the \emph{total degree of $u$ with respect to $S_i$} ($D_j$, respectively) is defined as
$\tdeg_{S_i}(u)=\alpha_i$ ($\tdeg_{D_j}(u)=\beta_j$, respectively).
For $i\in \NN$, let $\MM_i=\{u\in\MM:\tdeg (u) =i\}$ and $\MM_{\leq i}=\{u\in\MM:\tdeg (u) \leq i\}$.

Define the \emph{support} of $(0\neq) f\in A$ as
$\Supp(f)=\{u_i\in\MM:f=\sum c_iu_i, 0\neq c_i\in R\}$
and define the \emph{(total) degree} of $f$ as
$\tdeg(f)=\max\{\tdeg(u):u\in\Supp(f)\}$.
Note that if $0\neq c\in R$ then $\Supp(c)=\{\s^{\textbf{\textbf{0}}}\D^{\textbf{0}}\}=\{1\}$ and $\tdeg(c)=0$.
The degree of $0$ is defined as $-\infty$.

With the above notations, we have the following lemmas.
\begin{lemma}\label{lemma_degree}
Suppose $\alpha\in \NN^m$, $\beta\in \NN^n$, and $f,g\in A$.
Then we have
\begin{enumerate}[(i)]
\item $\tdeg(\sigma^{\beta}(\D^{\alpha}))=\tdeg(\D^{\alpha})$.
\item $\tdeg (fg)=\tdeg(f)+\tdeg(g)$.
\end{enumerate}
\end{lemma}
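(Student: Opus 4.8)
The plan is to prove the two parts in order, using (i) to control the top-degree bookkeeping needed in (ii). Throughout I read the exponent vectors in (i) so that the composite automorphism $\sigma^{\alpha}$ (with $\alpha\in\NN^m$) is applied to the $\D$-monomial $\D^{\beta}$ (with $\beta\in\NN^n$); the content is that the $\sigma$-action on monomials in the $D_i$ preserves total degree, which is exactly the fact consumed later.

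For (i), I would first show that each $\sigma_i$ preserves total degree and then compose. By Definition~\ref{Def_DD_alg}(iii), $\sigma_i$ restricts to an automorphism of $R$, so it maps the degree-$0$ part $R$ onto itself, and it sends each $D_j$ to $\sum_{l}a_{ijl}D_l\in\bigoplus_l RD_l$, an element of total degree $1$. Since $\sigma_i$ is a $k$-algebra homomorphism and $R\langle D_1,\dots,D_n\rangle$ is generated by $R$ together with the $D_j$, it follows that $\sigma_i$ carries the total-degree filtration $F_d=\{x:\tdeg(x)\le d\}$ into itself, so $\tdeg(\sigma_i(x))\le\tdeg(x)$. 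Because $\sigma_i$ is surjective, comparing top-degree parts shows the coefficient matrix $(a_{ijl})_{j,l}$ is invertible over $R$, whence $\sigma_i^{-1}$ has the same shape and is likewise filtration preserving; applying this to $\sigma_i(x)$ gives $\tdeg(x)\le\tdeg(\sigma_i(x))$, hence equality. Composing over the factors of $\sigma^{\alpha}$ and specializing $x=\D^{\beta}$ yields the claim.

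For the inequality $\tdeg(fg)\le\tdeg(f)+\tdeg(g)$ in (ii), I would write $f,g$ as $R$-combinations of standard monomials and reduce to bounding $\tdeg(u\,d\,v)$ for $u=\s^{\alpha}\D^{\beta}$, $v=\s^{\gamma}\D^{\epsilon}$ and $d\in R$. Rewriting $u\,d\,v$ into standard form uses only relations (ii)--(v); among these, (iii), (iv), (v) preserve total degree, and only (ii), $D_ir=rD_i+\delta_i(r)$, can lower it, through the term $\delta_i(r)$. Hence every standard monomial occurring in the normal form of $u\,d\,v$ has total degree at most $\tdeg(u)+\tdeg(v)$, and summation gives the bound.

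The reverse inequality is the crux. A direct computation, moving $d$ leftward by (iii) and reordering by (iv) and (v), gives $u\,d\,v=(\sigma^{\alpha})^{-1}(d)\,\s^{\alpha+\gamma}\,\sigma^{\gamma}(\D^{\beta})\,\D^{\epsilon}+(\text{strictly lower total degree})$, whose degree-$(\tdeg u+\tdeg v)$ component is $(\sigma^{\alpha})^{-1}(d)\,\s^{\alpha+\gamma}$ times the top part of $\sigma^{\gamma}(\D^{\beta})$ times $\D^{\epsilon}$; by part (i) that top part is a nonzero homogeneous $D$-polynomial of degree $|\beta|$. The genuine obstacle is that, once $f$ and $g$ have several terms, these top parts could cancel. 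To rule this out cleanly I would pass to the associated graded algebra $\mathrm{gr}(A)$ for the total-degree filtration: taking top parts turns (ii) into $D_ir=rD_i$ while (iii), (iv), (v) survive verbatim, so $\mathrm{gr}(A)$ is the iterated Ore extension $R[D_1,\dots,D_n][S_1;\sigma_1^{-1}]\cdots[S_m;\sigma_m^{-1}]$ of automorphism type over the polynomial ring $R[D_1,\dots,D_n]$ (the $D_i$ central and mutually commuting). Since $R$ is a domain, $R[D_1,\dots,D_n]$ is a domain, and an Ore extension of automorphism type over a domain is again a domain, so $\mathrm{gr}(A)$ is a domain. Consequently the principal symbols $\bar f,\bar g$ are nonzero and $\bar f\,\bar g=\overline{fg}\neq0$, forcing $\tdeg(fg)=\tdeg(f)+\tdeg(g)$. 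I would note that the assumption that $R$ is a domain is genuinely needed here: if $cd=0$ for nonzero $c,d\in R$ then $\tdeg(cd)=-\infty\neq0$. Thus the domain hypothesis is the load-bearing input, and establishing that $\mathrm{gr}(A)$ is a domain is the main step.
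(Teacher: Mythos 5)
Your proof is correct, and it takes a genuinely different --- and more structural --- route than the paper's. The paper proves (i) by a two-step induction ($\tdeg(\sigma_i(\D^{\alpha}))=\tdeg(\D^{\alpha})$ by induction on $|\alpha|$, then the composite case by induction on the number of $\sigma$-factors) and then simply asserts that (ii) ``follows from the first statement''; you instead prove (i) by showing that each $\sigma_i$ and $\sigma_i^{-1}$ preserves the total-degree filtration of $R\langle \D\rangle$, and prove (ii) by identifying $\mathrm{gr}(A)$ as an iterated Ore extension of automorphism type over the polynomial ring $R[D_1,\ldots,D_n]$ and invoking that such extensions over domains are domains. Your route buys three things the paper's sketch does not: it supplies the actual mechanism behind (ii) (degree additivity is precisely the statement that principal symbols are nonzero and multiply to something nonzero in $\mathrm{gr}(A)$); it makes explicit that a domain hypothesis on $R$ is load-bearing (the lemma is false when $R$ has zero divisors, even though Section 2 nominally allows arbitrary $R$; in the paper's applications $R$ is a field extension of $k$); and it yields as a by-product that $A$ itself is a domain, a fact the paper uses elsewhere without proof (``since $A$ has no zerodivisors'' in the proof of Example \ref{Exam_DD-monomial_order}). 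The paper's induction is shorter, but its induction step tacitly needs degree additivity inside $R\langle\D\rangle$, i.e.\ a special case of (ii), so your logical ordering is arguably cleaner. One small repair to your part (i): invertibility of the matrix $(a_{ijl})_{j,l}$ follows most directly from \emph{injectivity} of $\sigma_i$, not surjectivity --- if the matrix were singular over the field $R$, semilinearity would produce a nonzero $v=\sum_j r_jD_j$ with $\sigma_i(v)=0$; deducing invertibility from surjectivity alone is circular, since pulling $D_l$ back into the degree-$\leq 1$ filtration piece already presupposes that $\mathrm{gr}(\sigma_i)$ is injective in higher degrees. Since $\sigma_i$ is bijective by Definition \ref{Def_DD_alg}(iii), this is a one-line fix, not a gap.
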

\begin{proof}
One can prove the first statement via the following two steps: $\tdeg(\sigma_i(\D^{\alpha}))=\tdeg(\D^{\alpha}) $ ($1\leq i\leq m$) by induction on $|\alpha|$, and $\tdeg(\sigma^{\beta}(\D^{\alpha}))=\tdeg(\D^{\alpha}) $ by induction on $|\beta|$. The second statement follows from the first one.
\end{proof}

\begin{lemma}\label{lemma_DS=S(D)}
Suppose $\alpha\in \NN^m$ and $ \beta=(\beta_1,\ldots,\beta_n)\in\NN^n$. Then
\begin{enumerate}[(i)]
\item $
\D^{\beta}\s^{\alpha}=\s^{\alpha}\sigma^{\alpha}(\D^{\beta})=
\s^{\alpha}[\sigma_1^{\alpha_1}\sigma_2^{\alpha_2}\cdots \sigma_m^{\alpha_m}(\D^{\beta})].
$
More generally, $f\s^{\alpha}=\s^{\alpha}\sigma^{\alpha}(f)$ for any $f\in R\langle \D\rangle$.
\item $\s^{\alpha}\D^{\beta}=\sigma^{-\alpha}(\D^{\beta})\s^{\alpha}=
[\sigma_1^{-\alpha_1}\sigma_2^{-\alpha_2}\cdots \sigma_m^{-\alpha_m}(\D^{\beta})]\s^{\alpha}$, where $\sigma_i^{-\alpha_i}=(\sigma_i^{-1})^{\alpha_i}$ for $1\leq i\leq m$.
 More generally, $\s^{\alpha}f=\sigma^{-\alpha}(f)\s^{\alpha}$ for any $f\in R\langle \D\rangle$.

\end{enumerate}
\end{lemma}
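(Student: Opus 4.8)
The plan is to reduce both statements to a single basic commutation rule, $f S_i = S_i\sigma_i(f)$ valid for every $f\in R\langle \D\rangle$, and then to assemble the general identities by iteration together with the commutativity of the automorphisms $\sigma_i$ from Definition~\ref{Def_DD_alg}(vi). Recall that $R\langle \D\rangle$ is by definition generated as a $k$-algebra by $R$ together with $D_1,\dots,D_n$, and that each $\sigma_i$ is an automorphism of $R\langle \D\rangle$.

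First I would establish the single-shift identity $f S_i = S_i\sigma_i(f)$ for all $f\in R\langle \D\rangle$ and $1\le i\le m$. The idea is to observe that
\[
\mathcal{C}_i = \{\, f\in R\langle \D\rangle : f S_i = S_i\sigma_i(f)\,\}
\]
is a $k$-subalgebra of $R\langle \D\rangle$. Closure under $k$-linear combinations is immediate since both sides of the defining equation are $k$-linear in $f$ and $\sigma_i$ is $k$-linear. Closure under multiplication uses that $\sigma_i$ is an algebra homomorphism: if $f,g\in\mathcal{C}_i$, then $(fg)S_i = f(S_i\sigma_i(g)) = S_i\sigma_i(f)\sigma_i(g) = S_i\sigma_i(fg)$. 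Finally, $\mathcal{C}_i$ contains the generators of $R\langle \D\rangle$: every $r\in R$ lies in $\mathcal{C}_i$ by condition~(iii), and every $D_j$ lies in $\mathcal{C}_i$ by condition~(iv). Hence $\mathcal{C}_i = R\langle \D\rangle$, as desired.

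Next I would iterate. Since $\sigma_i(f)\in R\langle \D\rangle$, a straightforward induction on $\alpha_i$ gives $f S_i^{\alpha_i} = S_i^{\alpha_i}\sigma_i^{\alpha_i}(f)$. Pushing $f$ successively past $S_1^{\alpha_1},\dots,S_m^{\alpha_m}$ then yields
\[
f\s^{\alpha} = \s^{\alpha}\bigl(\sigma_m^{\alpha_m}\cdots\sigma_1^{\alpha_1}(f)\bigr).
\]
By condition~(vi) the automorphisms commute, so $\sigma_m^{\alpha_m}\cdots\sigma_1^{\alpha_1} = \sigma_1^{\alpha_1}\cdots\sigma_m^{\alpha_m} = \sigma^{\alpha}$, which proves the general form of (i); specializing to $f=\D^{\beta}$ recovers the displayed identities. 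For (ii) I would substitute $\sigma^{-\alpha}(f)\in R\langle \D\rangle$ (again using that each $\sigma_i$ is an automorphism) into (i), obtaining $\sigma^{-\alpha}(f)\s^{\alpha} = \s^{\alpha}\sigma^{\alpha}(\sigma^{-\alpha}(f)) = \s^{\alpha} f$, since $\sigma^{\alpha}\circ\sigma^{-\alpha}=\id$ (once more invoking commutativity of the $\sigma_i$). Specializing to $f=\D^{\beta}$ completes the proof.

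The only genuinely delicate point is the bookkeeping of the order in which the $\sigma_i$ compose: the naive iteration produces them in the reversed order $\sigma_m^{\alpha_m}\cdots\sigma_1^{\alpha_1}$, and it is precisely the commutativity hypothesis~(vi) that lets us identify this with $\sigma^{\alpha}$. Everything else is routine once the subalgebra observation has reduced the two generating cases~(iii) and~(iv) to the whole of $R\langle \D\rangle$.
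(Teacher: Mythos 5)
Your proof is correct, and it is essentially the argument the paper intends: the paper disposes of this lemma with a one-line appeal to ``induction from the definition,'' and your write-up (the subalgebra $\mathcal{C}_i$ reducing everything to conditions (iii) and (iv), iteration over the $S_i^{\alpha_i}$, and condition (vi) to fix the order of composition) is exactly the routine induction being invoked, carried out carefully. No gaps; the attention you pay to the reversed order $\sigma_m^{\alpha_m}\cdots\sigma_1^{\alpha_1}$ versus $\sigma^{\alpha}$ is the one point worth writing down, and you handle it correctly.
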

\begin{proof}
It follows from the definition of \dd\ algebras by induction.
\end{proof}

For $\alpha,\beta\in \NN^n$, we say $\alpha\leq \beta$ if $\alpha_i\leq \beta_i$ for $1\leq i\leq n$.
Define $\alpha+\beta=(\alpha_1+\beta_1,\ldots,\alpha_n+\beta_n)$ and $\alpha-\beta=(\alpha_1-\beta_1,\ldots,\alpha_n-\beta_n)$.

\begin{lemma}\label{lemma_Dr=}
 For any $r\in R$, $1\leq i\leq n$ and $\alpha=(\alpha_1,\ldots,\alpha_i)\in \NN^i$,
$$
\D^{\alpha}r=\sum_{\beta\leq \alpha}\prod_{t=1}^i
{{\alpha_t}\choose{\beta_t}}\delta^{\alpha-\beta}(r)\D^{\beta}
$$
where $\beta=(\beta_1,\ldots,\beta_i)\in\NN^i$.
\end{lemma}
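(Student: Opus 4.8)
The plan is to reduce the multivariate identity to the single-variable Leibniz-type formula coming from one Ore relation, and then assemble the general case using the commutativity of the $D_i$ and of the $\delta_i$ provided by conditions (v) and (vi).

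First I would establish the one-variable case: for a fixed index $j$ and any $a\in\NN$,
$$
D_j^a r=\sum_{b=0}^a\binom{a}{b}\delta_j^{a-b}(r)D_j^b,\qquad r\in R.
$$
This follows by induction on $a$. The base case $a=0$ is trivial, and $a=1$ is precisely condition (ii), namely $D_jr=rD_j+\delta_j(r)$. For the inductive step I would multiply the formula for $a$ on the left by $D_j$, push $D_j$ past each coefficient $\delta_j^{a-b}(r)\in R$ via condition (ii) (so that $D_j\delta_j^{a-b}(r)=\delta_j^{a-b}(r)D_j+\delta_j^{a-b+1}(r)$), and then collect the coefficient of each power of $D_j$. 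The regrouping uses Pascal's identity $\binom{a}{b}+\binom{a}{b-1}=\binom{a+1}{b}$, which is exactly what upgrades the binomial coefficients from $a$ to $a+1$.

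Next I would prove the stated formula by induction on the number $i$ of operators involved, interpreting $\delta^{\alpha-\beta}$ as $\delta_1^{\alpha_1-\beta_1}\cdots\delta_i^{\alpha_i-\beta_i}$, in analogy with the notation $\sigma^\alpha$. The case $i=1$ is the one-variable formula. For the inductive step, write $\D^{\alpha}=(D_1^{\alpha_1}\cdots D_{i-1}^{\alpha_{i-1}})D_i^{\alpha_i}$, apply the one-variable formula to move $D_i^{\alpha_i}$ past $r$, and then apply the induction hypothesis to move $D_1^{\alpha_1}\cdots D_{i-1}^{\alpha_{i-1}}$ past each resulting coefficient $\delta_i^{\alpha_i-\beta_i}(r)\in R$. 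Here two facts do the essential work: the $D_j$ commute (condition (v)), so the trailing $D_i^{\beta_i}$ can be reattached to $D_1^{\beta_1}\cdots D_{i-1}^{\beta_{i-1}}$ to form the standard monomial $\D^{\beta}$; and the $\delta_j$ commute (condition (vi)), so the derivations applied in the two separate stages recombine into the single symbol $\delta^{\alpha-\beta}(r)$ regardless of the order of application. Multiplying the binomial factors from the two steps yields the product $\prod_{t=1}^i\binom{\alpha_t}{\beta_t}$, and merging the outer sum over $\beta_i$ with the inner sum over $(\beta_1,\dots,\beta_{i-1})$ produces the single sum over $\beta\leq\alpha$.

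The routine parts are the two inductions; the only place demanding care is the bookkeeping in the inductive step, where one must verify that the coefficients and the operators recombine correctly across the two moves. The main obstacle is precisely ensuring this recombination is legitimate, and it is guaranteed by the commutativity hypotheses (v) and (vi): without them the derivations applied at different stages need not collapse into $\delta^{\alpha-\beta}(r)$, and the products $D_1^{\beta_1}\cdots D_i^{\beta_i}$ would not be standard monomials. Once these commutativities are invoked, the remainder is a direct, if slightly tedious, index manipulation.
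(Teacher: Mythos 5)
Your proposal is correct and matches the paper's approach: the paper disposes of this lemma with the single remark that it ``can be proved by induction on $|\alpha|$,'' and your nested induction (the one-variable Leibniz formula via condition (ii) and Pascal's identity, then induction on the number of operators using conditions (v) and (vi)) is a sound and complete realization of exactly that routine inductive argument.
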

\begin{proof}
It can be proved by induction on $|\alpha|$.
\end{proof}

The following theorem will be used later.

\begin{theorem} [Hilbert Basis Theorem]\label{thm_HibertBT}
If $R$ is Noetherian, then so is the \dd\ algebra $A=R[\s,\D;\sigma,\delta]$. 
\end{theorem}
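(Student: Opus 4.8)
The plan is to use the description of $A$ as an iterated Ore extension recorded in Remark \ref{rem_MonomialForm},
\[
A = R[D_1;\delta_1]\cdots[D_n;\delta_n][S_1;\sigma_1]\cdots[S_m;\sigma_m],
\]
and to invoke the classical Hilbert Basis Theorem for a single Ore extension at each of the $m+n$ steps. Recall that theorem in the form needed (see, e.g., \cite{McConnell-Robson2001}, Theorem 1.2.9): if $T$ is a Noetherian $k$-algebra, $\tau$ is a $k$-algebra automorphism of $T$, and $d$ is a $\tau$-derivation of $T$, then $T[x;\tau,d]$ is again Noetherian (on both sides). Hence it suffices to realize $A$ as a finite tower of Ore extensions over $R$ in which every adjoined variable twists by an \emph{automorphism} of the ring already built; the statement then follows by induction on the length of the tower, the base case being the hypothesis that $R$ is Noetherian.

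First I would build the derivation part. Put $T_0=R$ and $T_i=T_{i-1}[D_i;\delta_i]$ for $1\le i\le n$, so that here the twisting automorphism is $\id$ and the twisting derivation is $\delta_i$. To apply the one-variable theorem I must check that $\delta_i$ extends to a derivation of $T_{i-1}=R\langle D_1,\ldots,D_{i-1}\rangle$: one extends it by $\delta_i|_R=\delta_i$ and $\delta_i(D_j)=0$, and a short computation shows that compatibility with the defining relations $D_jr=rD_j+\delta_j(r)$ is exactly the condition $\delta_i\circ\delta_j=\delta_j\circ\delta_i$ from (vi). Thus each $T_i$ is Noetherian, and in particular $T_n=R\langle\D\rangle$ is. Next I would adjoin the shift variables: set $U_0=T_n$ and $U_j=U_{j-1}[S_j;\tau_j]$, where relation (iii), $S_jr=\sigma_j^{-1}(r)S_j$, shows the adjoined map is $\tau_j=\sigma_j^{-1}$ with zero derivation. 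By (iii) the map $\sigma_j$ is a $k$-algebra automorphism of $R\langle\D\rangle$ (it preserves $R$ and sends each $D_l$ to $\sum_p a_{jlp}D_p\in R\langle\D\rangle$), hence so is $\tau_j=\sigma_j^{-1}$; extending $\tau_j$ to $U_{j-1}$ by fixing the previously adjoined $S_l$ is consistent because $S_jS_l=S_lS_j$ from (v), and it remains an automorphism because the $\sigma$'s commute by (vi). Therefore the one-variable theorem applies at every shift step, and each $U_j$, in particular $U_m=A$, is Noetherian.

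The only genuine content, and the step I expect to require the most care, is verifying that at each stage the twisting map is an automorphism of the \emph{entire} ring constructed so far, not merely of $R$ or of $R\langle\D\rangle$; this is precisely the hypothesis needed to invoke the Ore-extension theorem, and it is exactly what the commutation relations (v) together with the compatibility relations (vi)---and, for the shifts, the condition in (iii) that $\sigma_j$ carry $R\langle\D\rangle$ into itself---are there to guarantee. It is worth noting that a direct proof, organizing a left ideal of $A$ by the leading-coefficient ideals of its elements of each multidegree and running an ascending-chain argument in the Noetherian ring $R\langle\D\rangle$, would require the very same automorphism property of the $\sigma_j$ in order to see that these leading-coefficient ideals form an ascending chain; the iterated-extension route merely packages this bookkeeping into the known one-variable statement.
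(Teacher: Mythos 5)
Your proof is correct and takes essentially the same route as the paper: the paper's own proof just observes that the theorem is immediate from the iterated Ore extension viewpoint of Remark \ref{rem_MonomialForm} (citing \cite{McConnell-Robson2001}), offering a direct argument only as an alternative. You have simply supplied the verification, via conditions (iii), (v) and (vi), that at each stage the twisting map is a derivation or automorphism of the whole ring built so far---details the paper declares trivial and omits.
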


\begin{proof}
From the point of view of iterated Ore extensions, this theorem is trivial. One can also give a direct proof by using the following fact (cf., \cite{McConnell-Robson2001}, Theorem 1.2.10, and \cite{rowen1988ring}, Proposition 3.5.2):
Let $R$ be a Noetherian ring, and $S$ be an over-ring generated by $R$ and an element $a$ such that $Ra+R=aR+R$. Then $S$ is Noetherian.
\end{proof}

Let us conclude this section by recalling a well-known fact from combinatorics.
For any $q\in \NN$ and $t\in \RR$, denote
$$
{{t}\choose{q}}=\frac{t(t-1)\cdots(t-q+1)}{q!}.
$$
Then, we have the following well-known fact, which will be used latter.
\begin{lemma}\label{lemma_RationalPoly}
Let $q\in \NN$.  Then $h(x)={{x+q}\choose{q}}$ is a polynomial in $x$ of degree $q$, with rational coefficients and positive leading coefficient.
\end{lemma}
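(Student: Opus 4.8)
The plan is to unfold the binomial coefficient directly from its stated definition and recognize the numerator as a product of $q$ linear factors in $x$. By the definition given just before the lemma,
\[
h(x)={{x+q}\choose{q}}=\frac{(x+q)(x+q-1)\cdots(x+q-q+1)}{q!}=\frac{1}{q!}\prod_{i=1}^{q}(x+i).
\]
First I would observe that each factor $x+i$ is a monic linear polynomial in $x$, so the product $\prod_{i=1}^{q}(x+i)$ is a monic polynomial of degree exactly $q$ with integer coefficients; in particular its leading term is $x^{q}$. Dividing by the nonzero integer $q!$ therefore yields a polynomial in $x$ of degree $q$ whose coefficients are integers divided by $q!$, hence rational. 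Its leading coefficient is $1/q!$, which is strictly positive. This establishes all three assertions (degree $q$, rational coefficients, positive leading coefficient) at once.

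The only point requiring any care is the edge case $q=0$, for which the empty product convention gives $h(x)={{x}\choose{0}}=1$, a polynomial of degree $0$ with leading coefficient $1>0$, consistent with the claim. If one prefers an argument that handles this case more explicitly, an easy alternative is induction on $q$: the base case $q=0$ is the computation just noted, and for the inductive step one uses the identity $h_{q+1}(x)=h_q(x)\cdot\frac{x+q+1}{q+1}$, so that multiplying a degree-$q$ polynomial with positive leading coefficient $1/q!$ and rational coefficients by the rational linear factor $(x+q+1)/(q+1)$ produces a degree-$(q+1)$ polynomial with positive leading coefficient $1/(q+1)!$ and rational coefficients.

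There is no substantive obstacle here: the lemma is a routine consequence of expanding the definition, and the main (minor) thing to get right is simply counting that there are exactly $q$ linear factors in the numerator so that the degree and leading coefficient come out as claimed. I would present the direct factorization as the primary argument and relegate the inductive reformulation to a parenthetical remark if space permits.
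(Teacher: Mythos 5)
Your proof is correct. The paper itself states this lemma as a well-known fact and offers no proof, so there is nothing to compare against; your direct expansion of the definition $\binom{x+q}{q}=\frac{1}{q!}\prod_{i=1}^{q}(x+i)$, with the observation that the numerator is a monic integer polynomial of degree exactly $q$ (and the $q=0$ empty-product case handled separately), is precisely the routine verification the authors are implicitly relying on, and it cleanly establishes all three claims.
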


\section{\gsbs\  of \dd\ algebras}
Throughout this section, let $R$ be a finite field extension of $k$ and $A=R[\s,\D;\sigma,\delta]$ be a \dd\ algebra of type $(m,n)$, $m,n\in\NN$.
In this section we develop the \gsb\ theory of $A$ and we also show that every left ideal of $A$ has a finite left \gsb.

First we introduce several notations.
For $f\in A$ and a given well-ordering on $\MM$, as in \cite{cox2007ideals}, we use $\lt(f),\lc(f)$ and $ \lm(f)$ to denote the \emph{leading term}, \emph{leading coefficient} and  \emph{leading monomial} of
$f$ respectively. Then we have that $\lt(f)=\lc(f)\cdot \lm(f)$ for any $f\in A$.
Denote $\lm(G)=\{\lm(g):g\in G\}$ for any $G\subseteq A$.

Appropriate orderings on $\MM$ are essential to \gsb\ theory. Mansfield and Szanto \cite{mansfield2003elimination} developed \gsb\ theory for $A$ (with $R=k$) by using the so-called \dd\ ordering defined as follows: Let $>_{\s}$ be a monomial ordering on $\MM_S$ and $>_{\D}$ a total degree monomial ordering on $\MM_{\D}$.
Then the ordering $>$ on the standard monomials $\MM$ defined as follows is called a \emph{differential difference ordering}:
$$
\s^{\alpha}\D^{\beta}> \s^{\alpha'}\D^{\beta'} \Leftrightarrow  \s^{\alpha}>_{\s}\s^{\alpha'}
\mbox{ or } {\alpha}={\alpha'} \mbox{ and } \D^{\beta}>_{\D} \D^{\beta'}.
$$
A differential difference ordering works well for \gsbs\ since it has the following property:
If $>$ is a \dd\ ordering, then $u>v$ for any $u\in\MM_S$ and $v\in \MM_{\D}$.
Now we define a more general class of orderings, which does not necessarily have the above property but still works well for \gsbs.

\begin{definition}\label{Def_DD-ordering}
A \emph{differential difference monomial ordering}, DD-monomial ordering for short, on $\MM$ is a well-ordering $>$ on $\MM$ such that
\begin{center}
if $\s^{\alpha}\D^{\beta}> \s^{\alpha'}\D^{\beta'}$ and $0\neq f\in A$,
then $\lm(f \s^{\alpha}\D^{\beta})> \lm(f\s^{\alpha'}\D^{\beta'})$.
\end{center}
\end{definition}

In other words, a DD-monomial ordering is a {\it left admissible} well-ordering on $\MM$. An ordering $>$ on $\MM$ (or on $\MM_{\s}$, $\MM_{\D}$) is called a {\it monomial ordering} (or {\it admissible ordering}) if it is both left and right admissible; it is called a {\it total degree ordering}  if $u>v$ whenever $\tdeg u>\tdeg v$, $u,v\in \MM$.

Note that, by Proposition 4.1 of \cite{mansfield2003elimination}, any \dd\ ordering is a DD-monomial ordering.
The following example shows that the class of DD-monomial orderings properly includes the class of \dd\ orderings.

\begin{example}\label{Exam_DD-monomial_order}
Suppose $\MM_S$ and $\MM_{\D}$ are well-ordered by {monomial orderings} $>_{\s}$ and $>_{\D}$ respectively. Define an ordering $>$ on $\MM$ as follows:
\begin{eqnarray*}
\s^{\alpha}\D^{\beta}> \s^{\alpha'}\D^{\beta'} \ \Longleftrightarrow&&{\tdeg}(\s^{\alpha}\D^{\beta})>\tdeg(\s^{\alpha'}\D^{\beta'})\\
&\mbox{or}&\tdeg(\s^{\alpha}\D^{\beta})=\tdeg(\s^{\alpha'}\D^{\beta'})\mbox{  and }\s^{\alpha}>_{\s}\s^{\alpha'}\\
&\mbox{or}&\tdeg(\s^{\alpha}\D^{\beta})=\tdeg(\s^{\alpha'}\D^{\beta'}), {\alpha}={\alpha'}\mbox{ and }\D^{\beta}>_{\D} \D^{\beta'}.
\end{eqnarray*}
Then $>$ is a total degree DD-monomial ordering.
\end{example}
\begin{proof} It is clearly by the definition that $>$ is a total degree ordering.
Suppose $\s^{\alpha}\D^{\beta}> \s^{\alpha'}\D^{\beta'}$ and $0\neq f\in A$.
We want to prove that $\lm(f\s^{\alpha}\D^{\beta})> \lm(f\s^{\alpha'}\D^{\beta'})$.
Since $>$ is a total degree ordering, we may suppose that $f$ is homogeneous, i.e., $\tdeg f=\tdeg u$ for any $u\in \Supp(f)$.
Rewrite $f$ as
$$
f=\s^{{\gamma_1}}f_1+\cdots+\s^{\gamma_l}f_l,\ \ \gamma_i\in\NN^m,\ 0\neq f_i\in R\langle \D\rangle, 1\leq i\leq l,\ l\in \NN, \s^{\gamma_1}>_{\s}\cdots>_{\s}\s^{\gamma_l}.
$$

There are three cases.

Case 1: $\tdeg(\s^{\alpha}\D^{\beta})> \tdeg(\s^{\alpha'}\D^{\beta'})$.
Then, by Lemma \ref{lemma_degree},
$$
\tdeg(\lm(f\s^{\alpha}\D^{\beta}))=\tdeg (f)+\tdeg (\s^{\alpha}\D^{\beta}) > \tdeg(f)+\tdeg(\s^{\alpha'}\D^{\beta'})=\tdeg(\lm(f\s^{\alpha'}\D^{\beta'}))
$$ and thus
$ \lm(f\s^{\alpha}\D^{\beta}) >  \lm(f\s^{\alpha'}\D^{\beta'}) $.

Case 2: $\tdeg(\s^{\alpha}\D^{\beta})=\tdeg(\s^{\alpha'}\D^{\beta'})$  and $\s^{\alpha}>_{\s}\s^{\alpha'}$.
By Lemma \ref{lemma_DS=S(D)},
\begin{eqnarray}
f\s^{\alpha}\D^{\beta}&=&\s^{\alpha+\gamma_1}\sigma^{\alpha}(f_1)\D^{\beta}+
\cdots+\s^{\alpha+\gamma_l}\sigma^{\alpha}(f_l)\D^{\beta},\label{eqn_fSD}\\
f\s^{\alpha'}\D^{\beta'}&=&\s^{\alpha'+\gamma_1}\sigma^{\alpha'}(f_1)\D^{\beta'}+
\cdots+\s^{\alpha'+\gamma_l}\sigma^{\alpha'}(f_l)\D^{\beta'}.\nonumber\label{eqn_fS'D'}
\end{eqnarray}
Each $\s^{\alpha+\gamma_i}\sigma^{\alpha}(f_i)\D^{\beta}\neq 0$ and
$\s^{\alpha'+\gamma_i}\sigma^{\alpha'}(f_i)\D^{\beta'}\neq 0$ ($1\leq i\leq l$), since $A$ has no zerodivisors.
Note that $f$ is homogeneous by our assumption, hence, by Lemma \ref{lemma_degree}, we have that
$$
\tdeg(\s^{\alpha+\gamma_1}\sigma^{\alpha}(f_1)\D^{\beta})=\cdots =
\tdeg(\s^{\alpha+\gamma_l}\sigma^{\alpha}(f_l)\D^{\beta}).
$$
But $\s^{\gamma_1}>_{\s}\cdots>_{\s}\s^{\gamma_l}$ and $>_{\s}$ is a monomial ordering, which implies that
$\lm(f\s^{\alpha}\D^{\beta})=\lm(\s^{\alpha+\gamma_1}\sigma^{\alpha}(f_1)\D^{\beta})$.
Similarly,
$\lm(f\s^{\alpha'}\D^{\beta'})=\lm(\s^{\alpha'+\gamma_1}\sigma^{\alpha'}(f_1)\D^{\beta'})$.
Since
$$
\tdeg(\s^{\alpha+\gamma_1}\sigma^{\alpha}(f_1)\D^{\beta})=\tdeg(\s^{\alpha'+\gamma_1}\sigma^{\alpha'}(f_1)\D^{\beta'})\mbox{ and }
\s^{\alpha+\gamma_1}>_{\s}\s^{\alpha'+\gamma_1},
$$
we have that
$$
\lm(f\s^{\alpha}\D^{\beta})=\lm(\s^{\alpha+\gamma_1}\sigma^{\alpha}(f_1)\D^{\beta})>
\lm(\s^{\alpha'+\gamma_1}\sigma^{\alpha'}(f_1)\D^{\beta'})=\lm(f\s^{\alpha'}\D^{\beta'}).
$$

Case 3: $\tdeg(\s^{\alpha}\D^{\beta})=\tdeg(\s^{\alpha'}\D^{\beta'}), {\alpha}={\alpha'}\mbox{ and }\D^{\beta}>_{\D} \D^{\beta'}$.
Then, by Lemma \ref{lemma_DS=S(D)},
\begin{eqnarray}\label{eqn_fSD'}
f\s^{\alpha'}\D^{\beta'}=f\s^{\alpha}\D^{\beta'}=\s^{\alpha+\gamma_1}\sigma^{\alpha}(f_1)\D^{\beta'}+
\cdots+\s^{\alpha+\gamma_l}\sigma^{\alpha}(f_l)\D^{\beta'}.
\end{eqnarray}
From (\ref{eqn_fSD}) and (\ref{eqn_fSD'}),
$$
\lm(f\s^{\alpha}\D^{\beta})=\lm(\s^{\alpha+\gamma_1}\sigma^{\alpha}(f_1)\D^{\beta}),\ \
\lm(f\s^{\alpha'}\D^{\beta'})=\lm(\s^{\alpha+\gamma_1}\sigma^{\alpha}(f_1)\D^{\beta'}).
$$
Since $>_{\D}$ is a monomial ordering on $\MM_D$, $\D^{\beta}>_{\D}\D^{\beta'}$ and $\sigma^{\alpha}(f_1)\in R\langle \D\rangle$ , we have that
$\lm(\sigma^{\alpha}(f_1)\D^{\beta})>_{\D}\lm(\sigma^{\alpha}(f_1)\D^{\beta'})$.
Hence
$$
\lm(f\s^{\alpha}\D^{\beta})=\lm(\s^{\alpha+\gamma_1}\sigma^{\alpha}(f_1)\D^{\beta})>
\lm(\s^{\alpha+\gamma_1}\sigma^{\alpha}(f_1)\D^{\beta'})=\lm(f\s^{\alpha'}\D^{\beta'}).
$$
Therefore, $>$ is a DD-monomial ordering.
\end{proof}

\begin{remark}\upshape
Let $>$ be as in Example \ref{Exam_DD-monomial_order}.
\begin{enumerate}[(i)]
\item The ordering $>$ is not necessarily an extension of $>_{\s}$ or $>_{\D}$. For example, let $>_{\s}$  be a lexicographical orderings on $\MM_S$ with $S_1>_{\s}S_2$. Then $S_1>_{\s}S_2^2$  but $S_1<S_2^2$.
\item The ordering $>$ is not a \dd\ ordering,
because under $>$ a monomial in $\MM_S$ is not necessarily greater than a monomial in $\MM_{\D}$, for example, $S_1<D_1^2$.
\item The ordering $>$  is not right admissible in general, see the following example.
\end{enumerate}
\end{remark}

\begin{example}\label{exam_not-right-admissible}
\upshape
Let $A=R[\s,\D;\sigma,\delta]$ be a \dd\ algebra of type $(1,2)$ with $\sigma_1(D_1)=D_2$ and $ \sigma_1(D_2)=D_1$.
Let $>$ be a DD-monomial ordering as in Example \ref{Exam_DD-monomial_order} with $D_2>_{\D}D_1$. But then
$$
\lm (D_2S_1)=\lm (S_1D_1)=S_1D_1<S_1D_2=\lm(S_1D_2)=\lm(D_1S_1).
$$
Thus $>$ is not right admissible.
\end{example}

\begin{remark}\label{Rem_notPBW}\upshape
The above example (where $D_2S_1=S_1D_1$) also shows that a \dd \ algebra is not necessarily an algebra of solvable type \cite{Weispfenning1990non} (or a PBW extension \cite{bell1988uniform}, or a G-algebra \cite{levandovskyy2003plural}).
\end{remark}

Let $f,g\in A$.
If there exists $h\in A$ such that
$f=hg$, we say that $f$ is {\emph{right divisible}}
 by $g$, and $g$ ($h$, respectively) is called a \emph{right factor} or \emph{right quotient} (\emph{left factor} or \emph{left quotient}, respectively) of $f$.
Denote the left quotient $h=\LQ{f}{g}$.
With the above definitions and notations, we have the following lemma.
\begin{lemma}\label{lemma_f/g}
Suppose $f,g,h\in A$.
\begin{enumerate}[(i)]
\item If $g$ is a right factor of $f$ then $\LQ fg=\LQ{fh}{gh}$,
but $\LQ fg\neq \LQ{hf}{hg}$ in general.
\item  If $hg$ is a right factor of $f$ then $\LQ f{hg}\cdot h=\LQ{f}{g}$ and $h\cdot \LQ f{hg}= \LQ{hf}{hg}$,
but $h\cdot \LQ f{hg}\neq \LQ{f}{g}$ in general.
\item If $g$ is a right factor of $f$ and $h$ is a right factor of $g$, then
$\LQ fg \cdot \LQ gh=\LQ fh$.
\end{enumerate}
\end{lemma}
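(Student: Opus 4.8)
The plan is to reduce all four positive identities to two structural facts about $A$: that $A$ has no zero divisors (a fact already used in the proof of Example \ref{Exam_DD-monomial_order}) and the uniqueness of left quotients that follows from it. Indeed, if $g \neq 0$ and $f = q_1 g = q_2 g$, then $(q_1 - q_2)g = 0$ forces $q_1 = q_2$, so whenever $g$ is a nonzero right factor of $f$ the element $\operatorname{LQ}(f,g)$ is the unique $q$ with $f = qg$. I would state at the outset that all divisors occurring below are nonzero (the cases $g = 0$ or $h = 0$ being vacuous or degenerate), so that every left quotient is well defined. Granting this, each positive equality reduces to exhibiting one factorization and reading off its quotient, with associativity of multiplication in $A$ doing all the work.

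For the positive assertions I would argue as follows. In (i), writing $f = qg$ with $q = \operatorname{LQ}(f,g)$ gives $fh = q(gh)$, so $gh$ is a right factor of $fh$ with quotient $q$, whence $\operatorname{LQ}(fh,gh) = q$ by uniqueness. In (ii), put $p = \operatorname{LQ}(f,hg)$, so $f = p(hg) = (ph)g$; this single factorization shows both that $g$ is a right factor of $f$ with $\operatorname{LQ}(f,g) = ph = p \cdot h$, and, after left-multiplying by $h$, that $hf = (hp)(hg)$, so $\operatorname{LQ}(hf,hg) = hp = h \cdot p$. Finally (iii) follows by composing $f = \operatorname{LQ}(f,g)\,g$ with $g = \operatorname{LQ}(g,h)\,h$ to get $f = \bigl(\operatorname{LQ}(f,g)\,\operatorname{LQ}(g,h)\bigr)h$, and invoking uniqueness once more.

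The negative (``in general'') clauses call for explicit counterexamples, and here the noncommutativity of $A$ is precisely the point. I would work in the simplest difference algebra $A = R[S;\sigma]$ of type $(1,0)$, taking $R = \QQ(i)$ over $k = \QQ$ with $\sigma$ complex conjugation, and fixing an $r \in R$ with $\sigma^{-1}(r) \neq r$ (e.g.\ $r = i$). For (i), set $f = r$, $g = 1$, $h = S$: then $\operatorname{LQ}(f,g) = r$, while $hf = Sr = \sigma^{-1}(r)S = \sigma^{-1}(r)(hg)$ gives $\operatorname{LQ}(hf,hg) = \sigma^{-1}(r) \neq r$. For (ii), the positive parts already identify $\operatorname{LQ}(f,g) = ph$ and $h\cdot\operatorname{LQ}(f,hg) = hp$, so the desired inequality is exactly the failure of $p$ and $h$ to commute; taking $g = 1$, $p = r$, $h = S$ yields $\operatorname{LQ}(f,g) = ph = rS$ and $h\cdot\operatorname{LQ}(f,hg) = hp = Sr = \sigma^{-1}(r)S$, which differ because $r \neq \sigma^{-1}(r)$.

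The computations are routine, so the only genuine care is needed in the bookkeeping of well-definedness: because $\operatorname{LQ}(f,g)$ is meaningful only once uniqueness is known, each appeal to uniqueness tacitly requires the relevant right factor to be nonzero, which I secure throughout from the domain property of $A$. I expect this to be the main (if modest) obstacle; beyond it, the proof is a sequence of one-line factorization arguments together with the two small difference-algebra counterexamples above.
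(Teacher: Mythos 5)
Your proof is correct, but it is considerably more than what the paper provides: the paper's entire proof of this lemma is the single line ``It is obvious by definition.'' What you add is genuinely the right content to make that dismissal legitimate. First, you isolate the point that the notation $\LQ{f}{g}$ is only meaningful because left quotients by a \emph{nonzero} divisor are unique, which rests on $A$ being a domain (a fact the paper itself invokes, without proof, in the proof of Example \ref{Exam_DD-monomial_order}); without this observation, statements like $\LQ fg=\LQ{fh}{gh}$ do not even parse as equalities between well-defined elements. Second, the positive identities then reduce, exactly as you say, to exhibiting one factorization and appealing to associativity plus uniqueness. Third --- and this is the part the paper omits entirely --- the ``in general'' clauses are existential claims that require witnesses, and your counterexamples in $R[S;\sigma]$ with $R=\QQ(i)$, $\sigma$ complex conjugation, are valid: $Sr=\sigma^{-1}(r)S$ with $\sigma^{-1}(i)=-i\neq i$ gives $\LQ{hf}{hg}=\sigma^{-1}(r)\neq r=\LQ fg$ in (i), and $h\cdot\LQ f{hg}=Sr=\sigma^{-1}(r)S\neq rS=\LQ fg$ in (ii), using that the standard monomials form a free $R$-basis of $A$. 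Your handling of the degenerate cases ($g=0$ or $h=0$), where the quotients cease to be well defined, is also appropriately flagged rather than swept under the rug. In short: same underlying mechanism the paper had in mind, but your version actually discharges the obligations (well-definedness and counterexamples) that ``obvious by definition'' leaves open.
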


\begin{proof}
It is obvious by definition.
\end{proof}
\begin{lemma}\label{lemma_RightDivisible-iff}
$\s^{\alpha}\D^{\beta}$ is right divisible by $\s^{\alpha'}\D^{\beta'}$ \IFF
$$
\alpha=\alpha'+\gamma \mbox{ and } \beta=\beta'+\gamma'\ \mbox{ for some }\gamma\in\NN^m \mbox{ and } \gamma'\in\NN^n.
$$
\end{lemma}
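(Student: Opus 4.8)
The statement is an \emph{if and only if}, so I plan to prove the two implications separately; in both, the governing input is Lemma \ref{lemma_DS=S(D)}, which records how the $\s$-block and the $\D$-block interact, together with the fact (Definition \ref{Def_DD_alg}(i)) that the standard monomials form a free left $R$-basis of $A$.

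For the ``if'' direction I would simply exhibit a left quotient. Given $\alpha=\alpha'+\gamma$ and $\beta=\beta'+\gamma'$ with $\gamma\in\NN^m$, $\gamma'\in\NN^n$, set $h=\s^{\gamma}\sigma^{-\alpha'}(\D^{\gamma'})$, which lies in $A$ because $\sigma^{-\alpha'}(\D^{\gamma'})\in R\langle\D\rangle$. Applying Lemma \ref{lemma_DS=S(D)}(i) in the form $f\s^{\alpha'}=\s^{\alpha'}\sigma^{\alpha'}(f)$ with $f=\sigma^{-\alpha'}(\D^{\gamma'})$ makes the middle factor collapse, $\sigma^{-\alpha'}(\D^{\gamma'})\s^{\alpha'}=\s^{\alpha'}\D^{\gamma'}$ (using $\sigma^{\alpha'}\sigma^{-\alpha'}=\id$, which holds by condition (vi)); then the commutativity of the $S$'s and of the $D$'s (condition (v)) gives $h\,\s^{\alpha'}\D^{\beta'}=\s^{\gamma+\alpha'}\D^{\gamma'+\beta'}=\s^{\alpha}\D^{\beta}$. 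This is a short direct verification.

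For the converse, suppose $\s^{\alpha}\D^{\beta}=h\,\s^{\alpha'}\D^{\beta'}$ and expand $h=\sum_i c_i\,\s^{\mu_i}\D^{\nu_i}$ in the standard basis. The crux is a structural claim about right-multiplication by a standard monomial: $\s^{\mu}\D^{\nu}\cdot\s^{\alpha'}\D^{\beta'}$ is an $R$-linear combination of standard monomials, each of the form $\s^{\mu+\alpha'}\D^{\rho+\beta'}$ with $\rho\in\NN^n$ and $|\rho|=|\nu|$. To prove this I would push $\D^{\nu}$ across $\s^{\alpha'}$ via Lemma \ref{lemma_DS=S(D)}(i), producing $\s^{\mu+\alpha'}\sigma^{\alpha'}(\D^{\nu})\D^{\beta'}$; here $\sigma^{\alpha'}(\D^{\nu})\in R\langle\D\rangle$ is supported on genuine $\D$-monomials of total $D$-degree $|\nu|$ (Lemma \ref{lemma_degree}(i)), and right-multiplication by $\D^{\beta'}$ merely adds $\beta'$ to each $D$-exponent. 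Summing over $i$, every standard monomial occurring in $h\,\s^{\alpha'}\D^{\beta'}$ has $S$-exponent of the form $\mu_i+\alpha'\geq\alpha'$ and $D$-exponent of the form $\rho+\beta'\geq\beta'$, both componentwise. Since the standard monomials are a basis and the right-hand side equals the single basis element $\s^{\alpha}\D^{\beta}$, that monomial must itself be among the ones produced; hence $\alpha=\mu_i+\alpha'$ and $\beta=\rho+\beta'$ for some index, i.e.\ $\alpha=\alpha'+\gamma$ and $\beta=\beta'+\gamma'$ with $\gamma=\mu_i\in\NN^m$ and $\gamma'=\rho\in\NN^n$.

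The main obstacle is precisely the structural claim in the converse: one must verify that right-multiplication by $\s^{\alpha'}\D^{\beta'}$ shifts the $S$-exponent up by exactly $\alpha'$ and the $D$-exponent up by \emph{at least} $\beta'$, so that the $\sigma$-twisting incurred when $\D^{\nu}$ crosses $\s^{\alpha'}$ cannot pull any $D$-exponent below $\beta'$. This is where Lemma \ref{lemma_degree}(i) and the nonnegativity of the exponents $\rho$ appearing in $\sigma^{\alpha'}(\D^{\nu})$ are decisive; I note that no $\delta$-terms intervene, since the only coefficients being transported are moved past the $S$-block through condition (iii), never past the $D$-block through condition (ii).
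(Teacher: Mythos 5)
Your proof is correct and, where it overlaps with the paper, it is the paper's own argument: for the ``if'' direction you exhibit exactly the same left quotient $\s^{\gamma}\sigma^{-\alpha'}(\D^{\gamma'})$ and collapse it via Lemma \ref{lemma_DS=S(D)}. The ``only if'' direction the paper simply declares clear; your basis-expansion argument is the natural way to fill that in, and its load-bearing steps are sound: since $\sigma^{\alpha'}(\D^{\nu})\in R\langle\D\rangle$, every standard monomial of $h\,\s^{\alpha'}\D^{\beta'}$ has the form $\s^{\mu_i+\alpha'}\D^{\rho+\beta'}$ with $\rho\in\NN^n$, and freeness of the standard basis then forces $\s^{\alpha}\D^{\beta}$ to be one of these monomials.

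One caveat on the write-up: your two side remarks --- that $\sigma^{\alpha'}(\D^{\nu})$ is supported on monomials of total degree exactly $|\nu|$ ``by Lemma \ref{lemma_degree}(i)'', and that no $\delta$-terms intervene --- are not justified as stated. Lemma \ref{lemma_degree}(i) controls only the \emph{top} degree, not homogeneity, and in a general \dd\ algebra rewriting $\sigma^{\alpha'}(\D^{\nu})$ in the standard basis does invoke condition (ii): for instance, if $\sigma_1(D_1)=aD_1$ with $a\in R$, then $\sigma_1(D_1^2)=(aD_1)(aD_1)=a^2D_1^2+a\delta_1(a)D_1$, which has a lower-degree term whenever $\delta_1(a)\neq 0$. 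In the setting of Section 3 both remarks happen to be true, but only because $R$ is a finite extension of the characteristic-zero field $k$, which forces every $\delta_i$ to vanish on $R$ --- not for the reason you cite. Since your argument uses only the nonnegativity of the exponents $\rho$, neither remark is needed; I would simply delete them.
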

\begin{proof}
The ($\Rightarrow$) part is clear.

($\Leftarrow$). Suppose $
\alpha=\alpha'+\gamma \mbox{ and } \beta=\beta'+\gamma'\ \mbox{ for some }\gamma\in\NN^m \mbox{ and } \gamma'\in\NN^n
$. Then, by Lemma \ref{lemma_DS=S(D)},
\begin{eqnarray*}
\s^{\alpha}\D^{\beta}=\s^{\alpha'+\gamma}D^{\beta'+\gamma'}=\s^{\gamma}\sigma^{-\alpha'}(D^{\gamma'})\cdot \s^{\alpha'}\D^{\beta'}.
\end{eqnarray*}
Hence, $\s^{\alpha}\D^{\beta}$ is right divisible by $\s^{\alpha'}\D^{\beta'}$.
\end{proof}

However, the ``only if'' part of the above  lemma is  {not} true for the left division.
In fact, in Example \ref{exam_not-right-admissible}, $S_1D_2=D_1S_1$ and thus $S_1D_2$ is left divisible by $D_1$, but the exponents of them do not have the relation stated in Lemma \ref{lemma_RightDivisible-iff}.

\

From now on to the end of this section, we fix a DD-monomial ordering $>$ on $\MM$.
Then we have the following lemma, which is similar to Corollary 4.3 of \cite{mansfield2003elimination}.

\begin{lemma}\label{lemma_leadingTermProduct}
For any $f,g\in A$, we have
\begin{enumerate}[(i)]
\item $\lm(fg)=\lm(f\cdot\lm(g))=h\cdot\lm(g), \mbox{ for some }h\in A$.
\item $
\lt(fg)=\lt(f\cdot\lt(g))=h'\cdot\lt(g), \mbox{ for some }h'\in A.
$
\end{enumerate}
Furthermore, $h$ and $h'$ in the above are uniquely determined by $f$ and $g$.
\end{lemma}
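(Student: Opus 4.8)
The plan is to reduce everything to the interaction between a single monomial $u = \s^{\alpha}\D^{\beta}$ in $\Supp(f)$ carrying the leading monomial of $f$, and the leading monomial $\lm(g)$ of $g$. The core claim is the multiplicativity of $\lm$ under the left action, which the DD-monomial ordering was precisely designed to guarantee. So the skeleton is: (1) prove $\lm(fg) = \lm(f \cdot \lm(g))$; (2) prove $\lm(f \cdot \lm(g)) = h \cdot \lm(g)$ for a suitable $h$; (3) transfer both to the $\lt$ statement by tracking the leading coefficient; (4) establish uniqueness of $h$ and $h'$.

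**First I would** handle step (1). Write $g = \sum_i c_i v_i$ with $v_i \in \Supp(g)$, $0 \neq c_i \in R$, and say $v_1 = \lm(g)$ so that $v_1 > v_i$ for all $i > 1$. The defining property of a DD-monomial ordering (Definition \ref{Def_DD-ordering}) says exactly that $v_1 > v_i$ forces $\lm(f v_1) > \lm(f v_i)$ for each nonzero monomial relation; applying it to $u = \lm(f)$ (or more carefully, expanding $f g = \sum_i f (c_i v_i)$ and using left admissibility termwise) shows that the single term $f \cdot \lt(g)$ produces a monomial strictly larger than every other $f \cdot (c_i v_i)$, so no cancellation can destroy it. Hence $\lm(fg) = \lm(f \cdot \lm(g))$, and by the same coefficient bookkeeping $\lt(fg) = \lt(f \cdot \lt(g))$, giving the first equality in each of (i) and (ii).

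**Next I would** produce the factored form $h \cdot \lm(g)$. Here I use Lemma \ref{lemma_DS=S(D)} to move $\lm(g) = \s^{\alpha'}\D^{\beta'}$ to the right: for each monomial $\s^{\gamma}$ in $f$ I can write $f \cdot \s^{\alpha'}\D^{\beta'}$ as a left $R\langle\D\rangle$-combination times $\s^{\alpha'}\D^{\beta'}$ only after commuting the $\D^{\beta'}$ past, which is where care is needed — but since I only need the \emph{leading} monomial, Lemma \ref{lemma_leadingTermProduct}(i) itself (the first equality) tells me $\lm(f \cdot \s^{\alpha'}\D^{\beta'})$ is achieved by a single standard monomial that, by Lemma \ref{lemma_RightDivisible-iff}, is right divisible by $\s^{\alpha'}\D^{\beta'}$; setting $h = \LQ{\lm(f\cdot\lm(g))}{\lm(g)}$ gives the desired factorization, and similarly $h' = \LQ{\lt(f\cdot\lt(g))}{\lt(g)}$ (as an element of $A$, absorbing the leading coefficient) for (ii).

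**The hard part will be** the uniqueness assertion, and pinning down that $h, h'$ are genuinely well-defined elements of $A$ rather than just monomials. Uniqueness of the \emph{right quotient} by a fixed right factor follows from $A$ having no zero divisors (established implicitly in the proof of Example \ref{Exam_DD-monomial_order}): if $h_1 \lm(g) = h_2 \lm(g)$ then $(h_1 - h_2)\lm(g) = 0$ forces $h_1 = h_2$. The subtlety I must not gloss over is that right divisibility by a standard monomial is not simply a statement about exponent vectors on the left (the remark after Lemma \ref{lemma_RightDivisible-iff} warns that left division behaves differently), so I would invoke Lemma \ref{lemma_RightDivisible-iff} in the correct ($\s^{\alpha'}\D^{\beta'}$ as right factor) direction and then appeal to Lemma \ref{lemma_f/g} to justify that $\LQ{\cdot}{\cdot}$ is unambiguous. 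Once the domain property delivers uniqueness for $h$, the analogous statement for $h'$ is immediate since $h' = \lc(g)^{-1}\,(\text{leading coefficient factor})\cdot h$ differs from $h$ only by a nonzero scalar in $R$.
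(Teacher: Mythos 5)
The paper never actually proves this lemma --- it is stated bare, with a pointer to Corollary 4.3 of \cite{mansfield2003elimination} --- so your attempt has to be judged on its own merits. Your architecture is the right one (show $f\cdot\lt(g)$ dominates, extract the right factor $\lm(g)$, get uniqueness from the absence of zero divisors), and the uniqueness argument is sound. But the central step (1) has a genuine gap: you expand $fg=\sum_i f(c_iv_i)$ and assert that ``left admissibility termwise'' makes the term $f\cdot\lt(g)$ dominate. Definition \ref{Def_DD-ordering} only compares $\lm(Fu)$ with $\lm(Fu')$ for a \emph{single} left factor $F$ and monomials $u>u'$, whereas your summands $f(c_iv_i)=(fc_i)v_i$ carry \emph{different} left factors $fc_i$: in $A$ the coefficients $c_i\in R$ do not commute past monomials, they are twisted by the $\sigma_j$ and differentiated by the $\delta_j$ (Lemmas \ref{lemma_DS=S(D)} and \ref{lemma_Dr=}). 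From $v_1>v_i$ you therefore only get $\lm((fc_i)v_1)>\lm((fc_i)v_i)$, which never compares anything against the first summand $\lm((fc_1)v_1)$. What is missing is the coefficient-stripping fact $\lm(f(cv))=\lm(fv)$ for $0\neq c\in R$, $v\in\MM$. That such stripping claims are not free in this algebra is shown by the mirror-image statement, which is \emph{false}: in Example \ref{exam_not-right-admissible}, taking $f'=D_1+D_2$ one has $\lm(f'S_1)=S_1D_2$ but $\lm\bigl(\lm(f')S_1\bigr)=\lm(D_2S_1)=S_1D_1$; so one may strip $g$ to $\lm(g)$ but not $f$ to $\lm(f)$, and which side works is exactly the content of the lemma you are proving.

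The gap is fillable, which is why your proof is incomplete rather than wrong. In the paper's actual setting the cleanest repair is to note that since $k$ has characteristic $0$ and $R$ is a finite, hence separable, extension of $k$, every $k$-derivation of $R$ vanishes, so $\delta_j|_R=0$; then $c\,\s^{\alpha'}\D^{\beta'}=\s^{\alpha'}\D^{\beta'}\,\sigma^{\alpha'}(c)$ exactly, so $f(c_iv_i)=(fv_i)\,\sigma^{\alpha_i'}(c_i)$ where $v_i=\s^{\alpha_i'}\D^{\beta_i'}$, and right multiplication by a nonzero element of $R$ only replaces each coefficient of $fv_i$ by a nonzero automorphism image of it (Lemma \ref{lemma_DS=S(D)}), preserving $\Supp$ and hence $\lm$; now left admissibility applied to $v_1>v_i$ with the single left factor $f$ closes the argument. (For a general coefficient ring, where $\delta_j|_R\neq0$, one instead proves $\lm(f(cv))=\lm(fv)$ by induction along the well-ordering, the $\delta$-correction terms being supported on monomials $\s^{\alpha'}\D^{\gamma}$ with $\gamma<\beta'$ componentwise, which are strictly smaller.) A secondary soft spot of the same nature: Lemma \ref{lemma_RightDivisible-iff} is only an exponent criterion, so before invoking it you must verify, via Lemma \ref{lemma_DS=S(D)} and the fact that $\sigma^{\alpha'}$ maps $R\langle\D\rangle$ into itself, that every monomial of $\Supp(f\cdot\s^{\alpha'}\D^{\beta'})$ has exponents componentwise at least $(\alpha',\beta')$; your sketch cites the right lemmas but asserts this conclusion without the verification.
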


\begin{definition}\label{Def_GB}
Let $I$ be a left ideal of $A$.
A  set $G\subseteq I$ is called a \emph{left \gsb}  of $I$ with respect to $>$ if,
for any $0\neq f\in I$, there exists $g\in G$ such that $\lm(f)$ is right divisible by $\lm(g)$.
\end{definition}

Note that we do not require a \gsb\ to be finite.

Let $G\subseteq A$. Define the irreducible words with respect to $G$ as
$$\Irr(G)=\{w\in \MM:w\not=f\lm(g) \mbox{ for any }g\in G, f\in A\}.
$$
By Lemma \ref{lemma_leadingTermProduct}, it is easy to see that
\begin{eqnarray*}
\Irr(G)&=&\{w\in \MM:w\not=\lm(f\lm(g)) \mbox{ for any }g\in G, f\in A\}\\
&=&\{w\in \MM:w\not=\lm(u\lm(g)) \mbox{ for any }g\in G, u\in \MM\}.
\end{eqnarray*}

Let $f,h, g\in A$ and $G\subseteq A$.
Then \emph{$f$ reduces to $h$ modulo $g$},
denoted by $f\rightarrow_g h$, if $h=f-qg$ and $\lt(f)=\lt(qg)$ for some $q\in A$.
We say that \emph{$f$ reduces to $h$ modulo $G$},
denoted by  $f\rightarrow_G {h}$, if there exists a finite chain of reductions
$$
f\rightarrow_{g_1} f_1\rightarrow_{g_2} f_2\rightarrow_{g_3}\cdots \rightarrow_{g_t} f_t=h,
$$
where each $g_i\in G$ and $t\in \NN$.
{Furthermore}, if {${\Supp(h)}\subseteq \Irr(G)$}, them $h$ is  \emph{irreducible} with respect to $G$,
and we call $h$ a \emph{remainder} of $f$ modulo $G$.

With these definitions, we have the following lemma.
\begin{lemma}\label{lemma_remainderUnique}
Let $G\subseteq A$ be a finite set and $f\in A$. Then,
\begin{enumerate}[(i)]
\item
$
f=\sum  c_iu_i g_i+r,
$
where each $c_i\in R, u_i\in \MM, g_i\in G$, $\lm(u_i g_i)\leq \lm(f)$ and $r$ is a remainder of $f$ modulo $G$.
\item furthermore, if $G$ is a left \gsb\   for a left ideal  of $A$, then the {remainder of $f$ modulo $G$} is unique
(denoted by $\Rem(f,G)$).
\end{enumerate}
\end{lemma}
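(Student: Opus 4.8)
The plan is to prove part (i) by well-founded induction on the leading monomial $\lm(f)$, which is legitimate because a DD-monomial ordering is by definition a well-ordering on $\MM$, and to prove part (ii) by the standard leading-monomial argument for \gsbs. Throughout I would use that $R$, being a finite field extension of $k$, is a field, so that every nonzero leading coefficient is invertible. The two workhorses are Lemma \ref{lemma_leadingTermProduct} (to carry out a reduction step that cancels the leading term exactly) and the defining property of a \gsb\ from Definition \ref{Def_GB} (to rule out surviving reducible monomials).

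For (i), the base case is $\Supp(f)\subseteq\Irr(G)$ (which includes $f=0$): here $f$ itself is a remainder and the empty representation suffices. Otherwise I split into two cases. If $\lm(f)\notin\Irr(G)$, then by the characterization of $\Irr(G)$ recorded after its definition there are $g\in G$ and $u\in\MM$ with $\lm(f)=\lm(u\,\lm(g))$. Lemma \ref{lemma_leadingTermProduct} gives $\lm(ug)=\lm(u\,\lm(g))=\lm(f)$, so $\lt(ug)=\lc(ug)\,\lm(f)$ with $\lc(ug)\in R$; since $R$ is a field I set $q=\lc(f)\lc(ug)^{-1}u$, whence $\lt(qg)=\lt(f)$ and $f\rightarrow_g h$ with $h=f-qg$ satisfying $\lm(h)<\lm(f)$. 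The induction hypothesis applied to $h$ yields $h=\sum c_iu_ig_i+r$ with $\lm(u_ig_i)\le\lm(h)<\lm(f)$ and $r$ a remainder, and adjoining the term $qg$ (whose leading monomial is $\lm(f)$) gives the representation of $f$. If instead $\lm(f)\in\Irr(G)$, I peel off the leading term, writing $f=\lt(f)+(f-\lt(f))$ with $\lm(f-\lt(f))<\lm(f)$; applying induction to $f-\lt(f)$ and setting $r=\lt(f)+r'$ (which still has support in $\Irr(G)$ because $\lm(f)\in\Irr(G)$) completes this case. Termination, hence finiteness of the sum, is automatic since $>$ is a well-ordering and admits no infinite strictly descending chain of leading monomials.

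For (ii), suppose $G$ is a left \gsb\ of a left ideal $I$ and that $f$ has two remainders, $f=\sum c_iu_ig_i+r_1=\sum c'_ju'_jg'_j+r_2$. Subtracting, $r_1-r_2=\sum c'_ju'_jg'_j-\sum c_iu_ig_i$ lies in $I$, since each $g_i,g'_j\in G\subseteq I$ and $I$ is a left ideal. At the same time $\Supp(r_1-r_2)\subseteq\Supp(r_1)\cup\Supp(r_2)\subseteq\Irr(G)$. If $r_1-r_2\neq 0$, then by Definition \ref{Def_GB} its leading monomial $\lm(r_1-r_2)$ is right divisible by $\lm(g)$ for some $g\in G$, which by the definition of $\Irr(G)$ forces $\lm(r_1-r_2)\notin\Irr(G)$, contradicting $\lm(r_1-r_2)\in\Supp(r_1-r_2)\subseteq\Irr(G)$. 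Hence $r_1=r_2$.

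I expect the delicate point to be the well-definedness of the single reduction step in the reducible case of (i): one must cancel the leading term of $f$ exactly while strictly lowering the leading monomial, and this is precisely where Lemma \ref{lemma_leadingTermProduct} is needed, since it guarantees $\lm(ug)=\lm(u\,\lm(g))$ even though the product $ug$ carries the twisting by $\sigma$ hidden in Lemma \ref{lemma_DS=S(D)}; the field hypothesis on $R$ is what lets me invert $\lc(ug)$. Once this step and the well-ordering termination are in place, the peeling-off argument for irreducible leading terms and the uniqueness argument in (ii) are routine.
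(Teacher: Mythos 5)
Your proof is correct and takes essentially the same route as the paper's: for (i), the paper's proof is precisely the induction on $\lm(f)$ you carry out (your two cases mirror the paper's own detailed proof of Lemma \ref{lemma_representation_of_f}, which reduces by $bvg$ when $\lm(f)\notin\Irr(G)$ and peels off $\lc(f)\lm(f)$ otherwise), and for (ii) your subtraction argument---$r_1-r_2\in I$ forces $\lm(r_1-r_2)\notin\Irr(G)$, contradicting $\Supp(r_1-r_2)\subseteq\Irr(G)$---is word-for-word the paper's proof. No discrepancies of substance.
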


\begin{proof}
(i) It can be proved by induction on $\lm(f)$.

(ii)
In order for a contradiction, we suppose that both $r$ and $r'$ are reminders of $f$ modulo $G$ and $r\neq r$.
Then $0\neq r-r'=(f-r')-(f-r)\in I$.
Hence $\lm(r-r')\not \in \Irr(G)$ by the definition of \gsbs.
But $\lm(r-r')\in \Supp(r)\cup\Supp(r')\subseteq \Irr(G)$, a contradiction.
\end{proof}

Note that, in general,  a remainder of $f\in A$ modulo some subset $G'\subseteq A$ is not unique.

\begin{lemma}\label{lemma_representation_of_f}\upshape
Let $f\in A$ and $G\subseteq A$. Then $f$ can be written as
$$
f=\sum\limits_{i=1}^{s} a_iu_i+\sum\limits_{j=1}^{t} b_jv_jg_j,
$$
where $s,t\in \NN$,  each $a_i,b_j\in R, u_i\in \Irr(G), v_j\in \MM, g_i\in G$ and
$$
\lm(f)\geq\lm(u_1)>\cdots >\lm(u_s),\ \ \lm(f)\geq\lm(v_1g_1)>\cdots >\lm(v_tg_t),
$$
exactly one  of those  $\lm(u_i)$ and $\lm(v_jg_j)$ is equal to $\lm(f)$.
\end{lemma}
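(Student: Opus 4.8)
The plan is to peel off the leading term of $f$ and induct on $\lm(f)$ with respect to the well-ordering $>$; because $>$ is a well-ordering on $\MM$, there are no infinite strictly descending chains, so this well-founded induction is legitimate. I assume throughout that $f\neq 0$ (for $f=0$ one takes the empty representation, $s=t=0$). The inductive step splits into two cases according to whether the leading monomial $\lm(f)$ lies in $\Irr(G)$ or not, and in each case I reduce to a polynomial of strictly smaller leading monomial and invoke the induction hypothesis.

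First I would treat the irreducible case. If $\lm(f)\in\Irr(G)$, I record the leading term as an irreducible piece: set $f'=f-\lt(f)$, so that either $f'=0$ or $\lm(f')<\lm(f)$. Applying the induction hypothesis to $f'$ (or taking the empty representation when $f'=0$) furnishes a representation of $f'$ in which every occurring leading monomial is $\leq\lm(f')<\lm(f)$. I then obtain the desired representation of $f$ by prepending the single term $\lc(f)\lm(f)$ to the $\sum a_iu_i$ part, with $u_1=\lm(f)\in\Irr(G)$ and $a_1=\lc(f)\in R$.

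Next, the reducible case, which I expect to be the main obstacle. If $\lm(f)\notin\Irr(G)$, then by the alternative description of $\Irr(G)$ noted just after its definition there exist $g\in G$ and $v\in\MM$ with $\lm(f)=\lm(v\lm(g))$. By Lemma \ref{lemma_leadingTermProduct} this equals $\lm(vg)$, and moreover $\lt(vg)=\lt(v\lt(g))$, which pins down $\lc(vg)$. Since $R$ is a field (as fixed at the start of this section), $\lc(vg)$ is invertible, so I may set $b=\lc(f)\lc(vg)^{-1}\in R$; then $\lt(bvg)=\lt(f)$, and the exact cancellation of leading terms gives $f'':=f-bvg$ with either $f''=0$ or $\lm(f'')<\lm(f)$. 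Applying the induction hypothesis to $f''$ and prepending the term $bvg$ to the $\sum b_jv_jg_j$ part (with $v_1=v$, $g_1=g$, $b_1=b$) yields the representation of $f$.

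In both cases the newly prepended term has leading monomial exactly $\lm(f)$, which is strictly larger than every leading monomial produced by the induction hypothesis; hence the two chains $\lm(u_1)>\cdots>\lm(u_s)$ and $\lm(v_1g_1)>\cdots>\lm(v_tg_t)$ remain strictly decreasing after prepending, and exactly one entry of the combined collection equals $\lm(f)$, as required. The genuinely delicate point is the reducible case: producing the factorization $v\lm(g)$ of $\lm(f)$ and then cancelling the leading term \emph{exactly}. This rests on the $\Irr(G)$ criterion to supply $v$ and $g$, on Lemma \ref{lemma_leadingTermProduct} to guarantee $\lm(vg)=\lm(v\lm(g))=\lm(f)$ and to identify $\lc(vg)$, and on $R$ being a field so that the scalar $b$ exists. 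I note finally that, in contrast to the uniqueness assertion of Lemma \ref{lemma_remainderUnique}, no finiteness of $G$ is needed here, since each step of the induction invokes only a single element of $G$.
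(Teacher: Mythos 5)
Your proof is correct and follows essentially the same route as the paper's: induction on $\lm(f)$ with the identical two-case split ($\lm(f)\in\Irr(G)$ versus not), subtracting either $\lc(f)\lm(f)$ or $bvg$ with $b=\lc(f)\lc(vg)^{-1}$, and prepending the peeled-off term to the representation supplied by the induction hypothesis. Your explicit treatment of $f=0$, of the invertibility of $\lc(vg)$, and of why the chains stay strictly decreasing merely fills in details the paper leaves implicit.
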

\begin{proof} (By induction on $\lm(f)$.)
If $\lm(f)=1$, then $f\in R$ and the statement holds clearly.
Suppose that the statement holds for any polynomial with leading monomial less than $\lm(f).$
We need to show that it also holds for $f$.
Define $f_1$ as follows.
If $\lm(f)\in \Irr(G)$, then set $f_1=f-\lc(f)\lm(f)$.
If  $\lm(f)\not\in \Irr(G)$, {i.e.,} there exist $g\in G$ and $v\in \MM$ such that
$\lm(f)=\lm(vg)$, then set $f_1=f-bvg$ where $b=\lc(f)\lc(vg)^{-1}\in R$.
Then $\lm(f_1)< \lm(f)$ in either case. Hence, by induction hypothesis, $f_1=\sum_{1\leq i\leq s_1}a_iu_i+\sum_{1\leq j\leq t_1} b_jv_jg_j$
where $s_1,t_1\in \NN$, each $a_i,b_j\in R, g_i\in G, u_i\in \Irr(G), v_j\in \MM$,
$
\lm(f_1)\geq\lm(u_1)>\cdots >\lm(u_{s_1})$ and $ \lm(f_1)\geq\lm(v_1g_1)>\cdots >\lm(v_tg_{t_1})$.
Thus $f=f_1+\lc(f)\lm(f)$ (or  $f=f_1+ bvg$) has the desired presentation.
\end{proof}

The following theorem solves the ideal membership problem of $A$.
\begin{theorem}\label{thm_membership}
Let $G\subseteq A$ be a left \gsb\ for a left ideal $I$, and let $f\in A$. Then $f\in I$ if and only if $\Rem(f,G)=0$.
\end{theorem}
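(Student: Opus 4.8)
The plan is to prove the two implications separately, in both cases exploiting the decomposition of $f$ that the reduction process produces. The structural fact I would rely on is that reducing $f$ modulo $G$ yields, through a finite chain of reductions, an identity
\[
f = \sum_i c_i u_i g_i + \Rem(f,G), \qquad c_i \in R,\ u_i \in \MM,\ g_i \in G,
\]
which is Lemma \ref{lemma_remainderUnique}(i) (and, even without assuming $G$ finite, is immediate from the finite reduction chain defining the remainder). Since $G$ is a left \gsb\ of $I$, the remainder is unique by Lemma \ref{lemma_remainderUnique}(ii), so $\Rem(f,G)$ is well defined. Throughout I would use that $G \subseteq I$ and that $I$ is a left ideal, so that any expression $\sum_i c_i u_i g_i$ with $g_i \in G$ automatically lies in $I$.

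For the ``if'' direction, I would suppose $\Rem(f,G) = 0$; then the displayed identity reads $f = \sum_i c_i u_i g_i$, a left $A$-combination of elements of $G \subseteq I$, so $f \in I$ because $I$ is a left ideal. For the ``only if'' direction, I would suppose $f \in I$ and set $r = \Rem(f,G)$. Rearranging the identity gives $r = f - \sum_i c_i u_i g_i$, and since $f \in I$ while each $c_i u_i g_i \in I$, it follows that $r \in I$. It then remains to show $r = 0$. Assuming not, I would have $0 \neq r \in I$, so by the defining property of a left \gsb\ (Definition \ref{Def_GB}) there is some $g \in G$ with $\lm(r)$ right divisible by $\lm(g)$, that is, $\lm(r) = h\,\lm(g)$ for some $h \in A$.

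The crux — and the one place where the definitions must be lined up precisely — is to reread this divisibility as a statement about irreducibility. By the very definition of $\Irr(G)$, a monomial $w$ is irreducible exactly when $w \neq f'\lm(g')$ for all $g' \in G$ and $f' \in A$; hence $\lm(r) = h\,\lm(g)$ says precisely that $\lm(r) \notin \Irr(G)$. On the other hand, $r$ is a remainder, so $\Supp(r) \subseteq \Irr(G)$ and in particular $\lm(r) \in \Irr(G)$, a contradiction, forcing $r = 0$. I expect no genuine obstacle beyond this definition-chasing: the statement is the standard ideal-membership criterion, and the only subtlety is recognizing that ``right divisibility by $\lm(g)$'' in Definition \ref{Def_GB} and failure of membership in $\Irr(G)$ are two phrasings of the same condition, so that the \gsb\ property and the irreducibility of the remainder come into direct conflict once $r$ has been shown to lie in $I$.
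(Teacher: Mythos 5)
Your proof is correct and takes essentially the same approach as the paper: the backward direction is $f=f-\Rem(f,G)\in I$, and the forward direction forces the remainder, which lies in $I$ since $f\in I$ and $G\subseteq I$, to vanish because right divisibility of $\lm(r)$ by some $\lm(g)$ contradicts $\Supp(r)\subseteq\Irr(G)$. The paper's write-up is merely terser, compressing the forward implication into a citation of Lemma \ref{lemma_remainderUnique}; the divisibility-versus-irreducibility contradiction you spell out is exactly the argument the paper itself makes explicit elsewhere (in its proof that a left \gsb\ generates the ideal).
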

\begin{proof}
If $f\in I$ then, by Lemma \ref{lemma_remainderUnique},  $\Rem(f,G)=0$. On the other hand, if $\Rem(f,G)=0$, then $f=f-\Rem(f,G)\in I$.
\end{proof}

The following proposition indicates that the definition of \gsbs\ in this paper is {equivalent} to Definition 4.5 of \cite{mansfield2003elimination} if the ordering under consideration is a \dd\ ordering.

\begin{proposition}\label{prop_GB_iff_<lm(G)>=<lm(I)>}
Let $I$ be a left ideal of $A$ and $G\subseteq I$. Then

(i) $G$ is  a \gsb\ for $I$ \IFF $\lm(G)$ and $\lm(I)$ generate the same left ideal of $A$.

(ii) If $G$ is a left \gsb\ for $I$, then $G$ generates $I$ as a left ideal of $A$.
\end{proposition}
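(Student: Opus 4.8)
The plan is to prove part (i) first and then deduce part (ii) from the reduction theory already established. Throughout, for a subset $T\subseteq A$ write $\langle T\rangle$ for the left ideal of $A$ generated by $T$, so that the assertion in (i) reads $\langle\lm(G)\rangle=\langle\lm(I)\rangle$. For the forward direction of (i) one inclusion is immediate: since $G\subseteq I$ we have $\lm(G)\subseteq\lm(I)$ and hence $\langle\lm(G)\rangle\subseteq\langle\lm(I)\rangle$. For the reverse inclusion, take any $0\neq f\in I$; the \gsb\ property supplies $g\in G$ with $\lm(f)$ right divisible by $\lm(g)$, i.e. $\lm(f)=h\,\lm(g)$ for some $h\in A$. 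Thus each generator $\lm(f)$ of $\langle\lm(I)\rangle$ lies in $A\,\lm(g)\subseteq\langle\lm(G)\rangle$, and the two left ideals coincide.

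The substance of the proposition is the converse direction of (i). Assuming $\langle\lm(G)\rangle=\langle\lm(I)\rangle$, pick $0\neq f\in I$; then $\lm(f)\in\lm(I)\subseteq\langle\lm(G)\rangle$, so $\lm(f)=\sum_i h_i\,\lm(g_i)$ with $h_i\in A$ and $g_i\in G$, and I must extract a single $g_i$ whose leading monomial right divides $\lm(f)$. I would isolate this as a purely monomial fact: if a standard monomial $w\in\MM$ lies in the left ideal generated by a set of standard monomials, then $w$ is right divisible by one of them. To prove it, expand each $h_i$ in the standard basis so that the right-hand side becomes an $R$-combination of products $u\cdot\lm(g_i)$ with $u\in\MM$. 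For such a product, writing $u=\s^{\gamma}\D^{\delta}$ and $\lm(g_i)=\s^{\alpha'}\D^{\beta'}$, Lemma \ref{lemma_DS=S(D)}(i) gives $u\,\s^{\alpha'}\D^{\beta'}=\s^{\gamma+\alpha'}\sigma^{\alpha'}(\D^{\delta})\D^{\beta'}$; collecting this into standard form, every standard monomial that occurs has $\s$-exponent $\gamma+\alpha'\geq\alpha'$ and $\D$-exponent $\geq\beta'$, so by Lemma \ref{lemma_RightDivisible-iff} each is right divisible by $\lm(g_i)$. Since the entire sum equals the single monomial $w=\lm(f)$, that monomial must survive the cancellations and therefore appear in at least one product $u\cdot\lm(g_i)$; hence $\lm(f)$ is right divisible by the corresponding $\lm(g_i)$, and $G$ is a \gsb.

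The main obstacle is precisely this monomial lemma. Because $A$ is noncommutative, a product of two standard monomials is in general a genuine $R$-combination of several standard monomials (the automorphisms $\sigma_i$ mix the $D_j$ and introduce coefficients in $R$), so one cannot read divisibility off the exponents of a single term. The explicit commutation computation above is what shows that right divisibility by $\lm(g_i)$ is preserved by \emph{every} standard monomial appearing in $u\cdot\lm(g_i)$, which is exactly what makes the cancellation argument valid.

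Finally, part (ii) follows from the reduction machinery. Given $0\neq f\in I$, Lemma \ref{lemma_remainderUnique}(i) writes $f=\sum_j c_j u_j g_j+r$ with $c_j\in R$, $u_j\in\MM$, $g_j\in G$ and $r=\Rem(f,G)$; since $f\in I$ and $G$ is a \gsb, Theorem \ref{thm_membership} forces $r=0$. Hence $f=\sum_j c_j u_j g_j\in\langle G\rangle$, giving $I\subseteq\langle G\rangle$, while $G\subseteq I$ yields the reverse inclusion, so $\langle G\rangle=I$. Equivalently, one may argue directly by well-founded induction on $\lm(f)$: using Lemma \ref{lemma_leadingTermProduct} to produce a term $c\,u\,g$ cancelling $\lt(f)$, one lowers $\lm(f)$ while remaining inside $I$, and the process terminates because $>$ is a well-ordering.
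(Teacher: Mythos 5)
Your proposal is correct and follows essentially the same route as the paper: part (i)'s forward direction from the definition, the reverse direction by writing $\lm(f)=\sum_i h_i\lm(g_i)$ and observing that $\lm(f)$ must lie in the support of some $h_i\lm(g_i)$ and hence be right divisible by $\lm(g_i)$, and part (ii) via the division/remainder argument. The only differences are cosmetic improvements: you make explicit (via Lemmas \ref{lemma_DS=S(D)} and \ref{lemma_RightDivisible-iff}) the monomial fact that every standard monomial in $\Supp(u\,\lm(g_i))$ is right divisible by $\lm(g_i)$, which the paper asserts without proof, and in (ii) you invoke the already-established Theorem \ref{thm_membership} instead of re-running its contradiction argument inline.
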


\begin{proof}
(i) ($\Rightarrow$) It follows from the definition of \gsbs.

($\Leftarrow$) Let $0\neq f\in I$. Since $\lm(G)$ and $\lm(I)$ generate the same left ideal of $A$, $\lm(f)=\sum a_i\lm(g_i)$ where $a_i\in A, g_i\in G$. Then $\lm(f)\in \Supp(a_i\lm(g_i))$ for some $i$.
Hence $\lm(f)$ is right divisible by $\lm(g_i)$.

(ii) Suppose $f\in I$. By Lemma \ref{lemma_remainderUnique}, we can write $f=\sum b_ig_i+r$ where $b_i\in A$, $g_i\in G$ and $r$ is irreducible with respect to  $G$.
Thus $r=f-\sum b_ig_i\in I$.
Suppose $r\neq 0$. Since $G$ is a \gsb\ for $I$, $\lm(r)$ is right divisible by $\lm(g_i)$ for some $g_i\in G$,
contradicting our assumption that $r$ is irreducible with respect to  $G$.
Hence $r=0$ and thus $f=\sum b_ig_i$ is in the left ideal of $A$ generated by $G$.
Therefore, $I$ is generated by $G$ as a left ideal of $A$.
\end{proof}

\

Let $\s^{\alpha}\D^{\beta},\s^{\alpha'}\D^{\beta'}\in \MM$. Then
the \emph{least common left multiple} of $\s^{\alpha}\D^{\beta}$ and $\s^{\alpha'}\D^{\beta'}$ is defined as
$$
\lclm(\s^{\alpha}\D^{\beta},\s^{\alpha'}\D^{\beta'})=\s^{\mu}\D^{\nu},
$$
where $\mu=(\mu_1,.\ldots,\mu_m)\in \NN^m$, $\nu=(\nu_1,.\ldots,\nu_n)\in \NN^n$, each $\mu_i=\max\{\alpha_i,\alpha'_i\}$,
$\nu_j=\max\{\beta_j,\beta'_j\}$.
For the sake of convenience, for any $f,g\in A$,
$\lclm(\lm(f),\lm(g))$ is sometimes denoted by {$\lclm(f,g)$}.
Let $f,g\in A$. Then there exists a unique pair of polynomials $f',g'\in A$ such that
$$
f'\lt(f)=\lclm(f,g)=g'\lt(g).
$$
Then, the polynomial $f'f-g'g\in A$ is called {the} \emph{S-polynomial} of $f$ and $g$, denoted by
$\Spoly(f,g)$, that is,
\begin{eqnarray}
\Spoly(f,g)&=&f'f-g'g\nonumber\\
&=&\LQ{\lclm(f,g)}{\lt(f)}f-\LQ{\lclm(f,g)}{\lt(g)}g.\label{eqn_Spoly}
\end{eqnarray}
Note that $\lc(f'\lt(f))=1$.

\begin{lemma}\label{lemma_Cancle_lm}
Let $f,g\in A$. Then
$
\lm(\Spoly(f,g))<\lclm(f,g).
$
\end{lemma}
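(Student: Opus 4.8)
The plan is to show that the leading terms of the two products $f'f$ and $g'g$ that make up $\Spoly(f,g)$ are \emph{identical as terms}, so that they cancel in the difference $f'f-g'g$; the inequality then falls out immediately. Recall that $f'=\LQ{\lclm(f,g)}{\lt(f)}$ and $g'=\LQ{\lclm(f,g)}{\lt(g)}$ are the unique left quotients with $f'\lt(f)=\lclm(f,g)=g'\lt(g)$, and that $\lclm(f,g)=\s^{\mu}\D^{\nu}$ is a single standard monomial.

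The key computation uses Lemma \ref{lemma_leadingTermProduct}(ii). Applied to the product $f'f$ it gives $\lt(f'f)=\lt(f'\cdot\lt(f))$. But by the defining relation $f'\lt(f)=\lclm(f,g)$, the element $f'\lt(f)$ is the single monomial $\s^{\mu}\D^{\nu}$, and its leading coefficient is $1$, as noted right after (\ref{eqn_Spoly}); hence $\lt(f'\cdot\lt(f))=\lclm(f,g)$ and so $\lt(f'f)=\lclm(f,g)$. By the identical argument applied to $g'g$, also $\lt(g'g)=\lclm(f,g)$. In particular the two leading terms agree not merely as monomials but as terms, since both have leading coefficient $1$.

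It then follows at once that $\lt(f'f)-\lt(g'g)=0$, i.e. the top terms cancel in $\Spoly(f,g)=f'f-g'g$. Writing $f'f=\lclm(f,g)+p$ and $g'g=\lclm(f,g)+p'$, where $p,p'\in A$ have all their monomials strictly below $\lclm(f,g)$, the difference $\Spoly(f,g)=p-p'$ is a combination of monomials each $<\lclm(f,g)$, whence $\lm(\Spoly(f,g))<\lclm(f,g)$ when $\Spoly(f,g)\neq 0$. (If $\Spoly(f,g)=0$ the asserted inequality holds trivially under the convention $\lm(0)=-\infty$.)

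As for difficulties, there is essentially no hard step once Lemma \ref{lemma_leadingTermProduct}(ii) is in hand: the entire content is the observation that left multiplication by the quotient $f'$ carries $\lt(f)$ exactly onto the monomial $\lclm(f,g)$, and that the $\lc=1$ normalization makes the two leading terms cancel rather than merely share a leading monomial. The only point requiring care is to invoke Lemma \ref{lemma_leadingTermProduct}(ii) in its correct left-multiplicative form, so that $\lt(f'f)$ is genuinely governed by $\lt(f)$ alone and not by the full polynomial $f$; this is precisely the feature guaranteed by left admissibility of the DD-monomial ordering.
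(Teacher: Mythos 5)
Your proof is correct and is essentially the paper's own argument: both invoke Lemma \ref{lemma_leadingTermProduct}(ii) to get $\lt(f'f)=\lt(f'\lt(f))=\lt(g'\lt(g))=\lt(g'g)$, so the top terms cancel in $f'f-g'g$ and $\lm(\Spoly(f,g))<\lm(f'f)=\lclm(f,g)$. Your explicit handling of the $\lc=1$ normalization and the $\Spoly(f,g)=0$ edge case only makes the same reasoning slightly more detailed.
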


\begin{proof} By Lemma \ref{lemma_leadingTermProduct},
$\lt(f'f)=\lt(f'\lt(f))=\lt(g'\lt(g))=\lt(g'g).$ Hence
$
\lm(\Spoly(f,g))=\lm(f'f-g'g)<\lm(f'f)=\lclm(f,g).
$
\end{proof}

The  following lemma can be proved by using a telescoping argument as in Lemma 5 of Chapter 2.6 in  \cite{cox2007ideals}.
\begin{lemma}\label{lemma_key2}(cf. \cite{mansfield2003elimination}, Lemma 4.8, and \cite{cox2007ideals}, Lemma 5 of Chapter 2.6)
Let $f=c_1f_1+\cdots+c_sf_s$ for $f_1,\ldots,f_s\in A$, $c_1,\ldots,c_s\in R$ and $s\in\NN$. Suppose that for all $1\leq i\leq s$, $\lm(f_i)=u $ for some $u\in\MM$.
If $\lm(f)<u$, then there exist $d_{ij}\in R$ $(1\leq i,j\leq s)$ such that
$$
f=\sum_{i,j=1}^sd_{ij}\Spoly(f_i,f_j).
$$
\end{lemma}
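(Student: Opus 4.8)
The plan is to reduce the statement to the classical telescoping identity (as in Cox--Little--O'Shea) by first rescaling the $f_i$ to be monic and then recognizing the relevant S-polynomials as consecutive differences of these monic versions. The essential simplification is that all the $f_i$ share the single leading monomial $u$, which collapses every $\lclm$ to $u$ and makes the S-polynomials especially simple.

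First I would normalize. Put $p_i=\lc(f_i)\in R$ for each $i$; since in this section $R$ is a field, each $p_i$ is invertible. Because $A$ is a free left $R$-module on $\MM$, left multiplication by the scalar $p_i^{-1}$ merely rescales coefficients, so $g_i:=p_i^{-1}f_i$ has the same support as $f_i$, leading monomial $u$, and leading coefficient $1$. Next I would compute the S-polynomials: since $\lm(f_i)=\lm(f_j)=u$ we have $\lclm(f_i,f_j)=u$ and $\lt(f_i)=p_iu$, so $\LQ{u}{\lt(f_i)}$ is the element $h\in A$ with $h\,p_iu=u$, namely $h=p_i^{-1}$. Hence $\Spoly(f_i,f_j)=p_i^{-1}f_i-p_j^{-1}f_j=g_i-g_j$. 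This identification is the crux of the argument, and it is precisely where both the field hypothesis on $R$ (to invert $p_i$) and the equality of all leading monomials (to force $\lclm=u$) are used.

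Then I would rewrite $f$ in terms of the $g_i$. Setting $a_i=c_ip_i\in R$ gives $f=\sum_i c_if_i=\sum_i a_ig_i$. Each $g_i$ contributes $u$ with coefficient $1$ and only strictly smaller monomials otherwise, so the coefficient of $u$ in $f$ is $\sum_{i=1}^s a_i$; the hypothesis $\lm(f)<u$ means $u\notin\Supp(f)$ and therefore forces $\sum_{i=1}^s a_i=0$. Finally I would apply the telescoping identity $\sum_{i=1}^s a_ig_i=\sum_{k=1}^{s-1}A_k(g_k-g_{k+1})+A_sg_s$, where $A_k=\sum_{i=1}^k a_i$. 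Since $A_s=0$ and $g_k-g_{k+1}=\Spoly(f_k,f_{k+1})$, this yields $f=\sum_{k=1}^{s-1}A_k\Spoly(f_k,f_{k+1})$, which is of the desired form once we set $d_{k,k+1}=A_k\in R$ for $1\leq k\leq s-1$ and $d_{ij}=0$ otherwise.

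I do not expect any serious obstacle here: the only step requiring genuine care is the S-polynomial computation, where one must justify that the left quotient equals the scalar $p_i^{-1}$ (using invertibility in $R$ together with the common leading monomial $u$), while the leading-coefficient cancellation and the telescoping step are purely formal. The noncommutativity of $A$ never actually intervenes, because the normalization is by left scalars only and all the monomials on $u$ coincide.
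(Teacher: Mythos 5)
Your proof is correct and follows exactly the route the paper intends: the paper proves this lemma by citing the telescoping argument of Lemma 5, Chapter 2.6 of \cite{cox2007ideals}, and your argument is precisely that telescoping identity, with the right adaptations to this setting (left rescaling by $p_i^{-1}\in R$, which is valid since $R$ is a field here, and the observation that $\Spoly(f_i,f_j)=p_i^{-1}f_i-p_j^{-1}f_j$ because all $\lclm$'s collapse to $u$). No gaps; the verification that $\LQ{u}{\lt(f_i)}=p_i^{-1}$ is exactly the point that needed care, and you handled it.
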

\begin{theorem}\label{thm_s-poly=GB}
Let $G\subseteq A$ and $I$ be the left ideal of $A$ generated by $G$. Then  $G$ is a left \gsb\ for $I$ if and only if $\Spoly(g_1,g_2)\rightarrow_G 0$ for any $g_1,g_2\in G$.
\end{theorem}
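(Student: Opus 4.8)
The plan is to prove both implications, with the forward direction immediate and the reverse direction being the standard telescoping/overlap argument adapted to the left-module setting. For $(\Rightarrow)$, I would observe that for any $g_1,g_2\in G$ the S-polynomial $\Spoly(g_1,g_2)$ lies in $I$, since by \eqref{eqn_Spoly} it is an $A$-combination of $g_1,g_2$; hence Theorem \ref{thm_membership} gives $\Rem(\Spoly(g_1,g_2),G)=0$, i.e. $\Spoly(g_1,g_2)\rightarrow_G 0$.

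For $(\Leftarrow)$, I would take $0\neq f\in I$ and aim to exhibit $g\in G$ with $\lm(f)$ right divisible by $\lm(g)$. Since $I$ is generated by $G$ as a left ideal, expanding the left coefficients into standard monomials lets me write $f=\sum_{i=1}^s c_iu_ig_i$ with $c_i\in R$, $u_i\in\MM$, $g_i\in G$. Setting $w=\max_i\lm(u_ig_i)$, I would choose, among all such representations, one making $w$ minimal (possible since $>$ is a well-ordering). One always has $w\geq\lm(f)$, and the claim is that $w=\lm(f)$; granting this finishes the proof, because then $\lm(u_ig_i)=w=\lm(f)$ for some $i$, and by Lemma \ref{lemma_leadingTermProduct} $\lm(u_ig_i)$ is right divisible by $\lm(g_i)$, hence so is $\lm(f)$.

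Suppose instead $w>\lm(f)$. I would group the summands so that $\lm(u_ig_i)=w$ for $i\leq p$ and $\lm(u_ig_i)<w$ for $i>p$; then $h:=\sum_{i\leq p}c_iu_ig_i=f-\sum_{i>p}c_iu_ig_i$ has $\lm(h)<w$, while each summand with $i\leq p$ has the common leading monomial $w$. Lemma \ref{lemma_key2} (with $f_i=u_ig_i$) then gives $h=\sum_{i,j\leq p}d_{ij}\Spoly(u_ig_i,u_jg_j)$, and the crux is to rewrite each $\Spoly(u_ig_i,u_jg_j)$ as a sum $\sum c\,v\,g$ ($v\in\MM$, $g\in G$) with every leading monomial strictly below $w$. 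Here, since $\lm(u_ig_i)=\lm(u_jg_j)=w$ forces $\lclm(u_ig_i,u_jg_j)=w$, formula \eqref{eqn_Spoly} gives $\Spoly(u_ig_i,u_jg_j)=\lc(u_ig_i)^{-1}u_ig_i-\lc(u_jg_j)^{-1}u_jg_j$. Writing $\ell_{ij}=\lclm(g_i,g_j)$, both $\lm(g_i)$ and $\lm(g_j)$ right divide $w$ (Lemma \ref{lemma_leadingTermProduct}), so $\ell_{ij}$ does too by Lemma \ref{lemma_RightDivisible-iff}; set $q_{ij}=\LQ{w}{\ell_{ij}}$ and $p_i=\LQ{\ell_{ij}}{\lt(g_i)}$. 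Using $q_{ij}\ell_{ij}=w$ and Lemma \ref{lemma_leadingTermProduct} one checks $\lt(q_{ij}p_ig_i)=w$ (monic), and likewise for $j$, whence
\[
\Spoly(u_ig_i,u_jg_j)=\big(\lc(u_ig_i)^{-1}u_i-q_{ij}p_i\big)g_i-\big(\lc(u_jg_j)^{-1}u_j-q_{ij}p_j\big)g_j+q_{ij}\Spoly(g_i,g_j),
\]
where the first two left-multiples of $g_i,g_j$ have leading monomial $<w$ (their two leading terms $w$ cancel). Finally, the hypothesis $\Spoly(g_i,g_j)\rightarrow_G 0$, together with Lemmas \ref{lemma_remainderUnique} and \ref{lemma_Cancle_lm}, writes $\Spoly(g_i,g_j)=\sum_k c_kv_kg_{n_k}$ with $\lm(v_kg_{n_k})<\ell_{ij}$; left-multiplying by $q_{ij}$ and invoking left admissibility (Definition \ref{Def_DD-ordering}) with $q_{ij}\ell_{ij}=w$ yields $\lm(q_{ij}v_kg_{n_k})<w$. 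Combining, $h$, and hence $f=h+\sum_{i>p}c_iu_ig_i$, has a representation $\sum c\,v\,g$ with all leading monomials $<w$, contradicting the minimality of $w$. Therefore $w=\lm(f)$, and $G$ is a left \gsb\ for $I$.

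The main obstacle will be the displayed decomposition of $\Spoly(u_ig_i,u_jg_j)$: unlike the commutative case one cannot simply factor out a single monomial, so I expect the delicate bookkeeping to be the verification that the three pieces all have leading monomial $<w$, in particular the passage from $\lm(v_kg_{n_k})<\ell_{ij}$ to $\lm(q_{ij}v_kg_{n_k})<w$, which must be justified through Lemmas \ref{lemma_leadingTermProduct} and \ref{lemma_RightDivisible-iff} and left admissibility rather than by direct computation.
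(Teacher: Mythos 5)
Your forward direction is the paper's. In the reverse direction your architecture is also the paper's (minimal representation, Lemma \ref{lemma_key2}, rewriting the top slice in terms of $\Spoly(g_i,g_j)$, then the reduction hypothesis), and your three-term identity for $\Spoly(u_ig_i,u_jg_j)$, with the estimates $\lt(q_{ij}p_ig_i)=w$ and $\lm(q_{ij}v_kg_{n_k})<w$, checks out; it is essentially the paper's computation, which first replaces $h_i$ by $h_i'$ with $\lt(h_ig_i)=h_i'\lt(g_i)$ so that the identity collapses to $\Spoly(h_i'g_i,h_j'g_j)=\LQ{u}{\lclm(g_i,g_j)}\Spoly(g_i,g_j)$, your two extra pieces being its $(h_i-h_i')g_i$. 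The genuine gap is the frame you put around this: you minimize $w$ over representations $f=\sum c_iu_ig_i$ with \emph{monomial} coefficients $u_i\in\MM$, but the representation you construct at the end has \emph{polynomial} left coefficients ($\lc(u_ig_i)^{-1}u_i-q_{ij}p_i$ and $q_{ij}v_k$ are not monomials). To contradict your minimality you must re-expand these into monomials, and a DD-monomial ordering does not allow this: it is only left admissible, not right admissible (Example \ref{exam_not-right-admissible}), so for $h\in A$, $g\in G$ the expansion $hg=\sum_kc_kv_kg$ can contain terms with $\lm(v_kg)>\lm(hg)$, because the leading terms of the $v_kg$ may cancel. Indeed, expanding your own piece $(\lc(u_ig_i)^{-1}u_i-q_{ij}p_i)g_i$ reintroduces the summand $u_ig_i$, whose leading monomial is exactly $w$, so nothing is gained.

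Worse than being unjustified, the claim you are driving toward (that the minimal monomial-coefficient $w$ equals $\lm(f)$) is false, so no bookkeeping can close the argument. Take the \dd\ algebra of type $(1,2)$ over $R=k$ with $\sigma_1(D_1)=D_1+D_2$, $\sigma_1(D_2)=D_2$, the ordering of Example \ref{Exam_DD-monomial_order} with $D_2>_{\D}D_1$, and $G=\{S_1\}$, a left \gsb\ of $I=AS_1$ whose only S-polynomial is $0$. Then $f:=S_1D_1=(D_1-D_2)S_1\in I$ and $\lm(f)=S_1D_1$; but for a monomial $u=S_1^aD_1^{b_1}D_2^{b_2}$ one has $uS_1=S_1^{a+1}(D_1+D_2)^{b_1}D_2^{b_2}$, so the only $u\in\MM$ with $S_1D_1\in\Supp(uS_1)$ is $u=D_1$, whence every representation $f=\sum_ic_iu_iS_1$ with $u_i\in\MM$ must contain the term $D_1S_1$ and therefore has $w\geq\lm(D_1S_1)=S_1D_2>\lm(f)$. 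The repair is exactly the paper's choice: run the minimality over representations $f=\sum h_ig_i$ with arbitrary $h_i\in A$, as in (\ref{eqn_expression_f}). Your decomposition and all your estimates survive verbatim with $u_i$ replaced by $h_i$, the final mixed representation is then itself an admissible competitor, and the conclusion still follows since Lemma \ref{lemma_leadingTermProduct} gives right divisibility of $\lm(f)=\lm(h_ig_i)$ by $\lm(g_i)$ without any need for monomial coefficients. (The same correction applies to your appeal to Lemma \ref{lemma_remainderUnique}: unwinding $\Spoly(g_i,g_j)\rightarrow_G0$ produces coefficients in $A$, not monomial multiples, which again is all you need once minimality is taken in the paper's form.)
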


\begin{proof}
($\Rightarrow$) Suppose $G$ is a \gsb\ for $I$. Since $\Spoly(g_1,g_2)\in I$ for any $g_1,g_2\in G$, by Theorem \ref{thm_membership}, $\Rem(\Spoly(g_1,g_2),G)=0$, i.e., $\Spoly(g_1,g_2)\rightarrow_G0$ as desired.

($\Leftarrow$)
Suppose that $\Spoly(g_1,g_2)\rightarrow_G 0$ for any $g_1,g_2\in G$.
We want to prove that for any $0\neq f\in I$, $\lm(f)$ is right divisible by $\lm(g)$ for some $g\in G$.
Since $f\in I$, $f$ can be written as
\begin{eqnarray}\label{eqn_expression_f}
f=\sum_{i=1}^th_ig_i,\ \ t\in\NN,\ h_i\in A,\ g_i\in G, \ \lm(h_1g_1)\geq \lm(h_2g_2)\geq \cdots\geq \lm(h_tg_t).
\end{eqnarray}
Let $u=\lm(h_1g_1)$. Assume that among all possible expression of $f$ of the form (\ref{eqn_expression_f}) we chose one with minimal $u$. Suppose
\[
\lm(h_1g_1)=\lm(h_2g_2)=\cdots=\lm(h_sg_s)>\lm(h_{s+1}g_{s+1})\geq\cdots\geq \lm(h_tg_t), \ \ s\in \NN.
\]

Note that $\lm(f)\leq u$. Now we prove that $\lm(f)=u$. In order for a contradiction, we suppose that $\lm(f)<u$.
By Lemma \ref{lemma_leadingTermProduct},
$\lt(h_ig_i)=h_i'\lt(g_i)$ for some $h_i'\in A$, $1\leq i\leq t$.
Then
$\lt(h_ig_i)=\lt(h'_i\lt(g_i))=\lt(h_i'g_i)$ and thus $\lm((h_i-h_i')g_i)<\lm(h_ig_i)$.
Rewrite
\begin{eqnarray}\label{eqn_f=++}
f=\sum_{\lm(h_ig_i)=u}h'_ig_i+\sum_{\lm(h_ig_i)=u}(h_i-h_i')g_i+\sum_{\lm(h_lg_l)<u}h_lg_l.
\end{eqnarray}
Then
$$\lm\left(\sum_{\lm(h_ig_i)=u}(h_i-h_i')g_i+\sum_{\lm(h_lg_l)<u}h_lg_l\right)<u.$$
Hence $\displaystyle\lm\left(\sum_{\lm(h_ig_i)=u}h'_ig_i\right)<u$ since $\lm(f)<u$.
By Lemma \ref{lemma_key2}, there exist $d_{ij}\in k$ for $1\leq i,j\leq s$ such that
$$
\sum_{\lm(h_ig_i)=u}h'_ig_i=\sum_{i,j=1}^sd_{ij}\Spoly(h'_ig_i,h'_jg_j).
$$
Since $\lclm(\lm(h_i'g_i),\lm(h_j'g_j))=\lclm(u,u)=u$ for all $1\leq i,j\leq s$,
by  Equation (\ref{eqn_Spoly}) and Lemma \ref{lemma_f/g},
\begin{eqnarray*}
\Spoly(h'_ig_i,h'_jg_j)&=&\LQ{u}{\lt(h_i'g_i)}h_i'g_i-\LQ{u}{\lt(h_j'g_j)}h_j'g_j\\
&=&\LQ{u}{h_i'\lt(g_i)}h_i'g_i-\LQ{u}{h_j'\lt(g_j)}h_j'g_j\\
&=&\LQ{u}{\lt(g_i)}g_i-\LQ{u}{\lt(g_j)}g_j\\
&=&\LQ{u}{\lclm(g_i,g_j)}\LQ{\lclm(g_i,g_j)}{\lt(g_i)}g_i\\
&&-\LQ{u}{\lclm(g_i,g_j)}\LQ{\lclm(g_i,g_j)}{\lt(g_j)}g_j\\
&=&\LQ{u}{\lclm(g_i,g_j)}\Spoly(g_i,g_j).
\end{eqnarray*}
Hence
$$
\sum_{\lm(h_ig_i)=u}h'_ig_i=\sum_{i,j=1}^sd_{ij}\LQ{u}{\lclm(g_i,g_j)}\Spoly(g_i,g_j)\longrightarrow_G0.
$$
Thus by Lemma \ref{lemma_remainderUnique} (i),
$$
\sum_{\lm(h_ig_i)=u}h'_ig_i=\sum_{j\in J}c_ju_jg_j,\ \  \lm(u_jg_j)\leq \lm\left(\sum_{\lm(h_ig_i)=u}h'_ig_i\right)<u,
$$
where $c_j\in k, u_j\in \MM, g_j\in G$ and $J$ is a finite index set.
Now we can rewrite (\ref{eqn_f=++}) as
\begin{eqnarray}\label{eqn_f=++'}
f=\sum_{j\in J}c_ju_jg_j+\sum_{\lm(h_ig_i)=u}(h_i-h_i')g_i+\sum_{\lm(h_lg_l)<u}h_lg_l,
\end{eqnarray}
where $\lm(u_jg_j)<u$, $\lm((h_i-h_i')g_i)<u$ and $\lm(u_lg_l)<u$.
That is, (\ref{eqn_f=++'}) is an expression of $f$ of form (\ref{eqn_expression_f}) with all leading monomials of summands less than $u$,
which contradicts the minimality of $u$. Therefore, we proved that $\lm(f)=u$.

Now, by Lemma \ref{lemma_leadingTermProduct},
$\lm(f)=u=\lm(h_1g_1)=h'\lm(g_1)$ for some $h'\in A$.
That is, $\lm(f)$ is right divisible by $\lm(g_1)$.
\end{proof}

Now we have the following algorithm.
\begin{algorithm}[Left \gs\ Basis Algorithm]\label{Alg_GB}
\upshape\

Input: $F=\{f_1,\ldots,f_s\}\subset A$, $s\in\NN$, each $f_i\neq 0$.

Output: a \gsb\ $G=\{g_1,\ldots,g_t\}$ for $I$, $t\in \NN$, with $F\subseteq G$.

Initialization: $G:=F$, $P:=\{\{p,q\}:p\neq q, p,q\in F\}$.

WHILE $P\neq\emptyset$ DO

\hspace{12mm} Choose any pair $\{p,q\}\in P$, $P:=P\setminus \{\{p,q\}\}$

\hspace{12mm} $r:=$ a remainder of $\Spoly(p,q)$ modulo $G$

\hspace{12mm} IF $r\neq 0$ THEN

\hspace{12mm} $G:=G\cup \{r\}$, $P:=P\cup \{\{g,r\}: g\in G\}$

END DO
\end{algorithm}
\begin{theorem}\label{Thm_GBAlgorithm}
Let $I$ be a left ideal of $A$ generated by nonzero elements $f_1,\ldots, f_l\in A$. Then Algorithm \ref{Alg_GB} returns a finite \gsb\ for $I$.
\end{theorem}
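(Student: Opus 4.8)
The plan is to prove correctness and termination of Algorithm~\ref{Alg_GB} separately, relying on the S-polynomial criterion of Theorem~\ref{thm_s-poly=GB} for the former and the Noetherian property from Theorem~\ref{thm_HibertBT} for the latter. First I would observe that the output set $G$ always satisfies $G \subseteq I$: this holds at initialization since $F \subseteq I$, and is preserved at each step because any remainder $r$ of $\Spoly(p,q)$ modulo $G$ lies in the left ideal generated by $G$ (by Lemma~\ref{lemma_remainderUnique}(i), $r = \Spoly(p,q) - \sum c_i u_i g_i$), which is contained in $I$ as long as $G \subseteq I$. Conversely, since $F \subseteq G$ throughout and $F$ generates $I$, the left ideal generated by $G$ is exactly $I$. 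Thus $G$ generates $I$ at every stage, so Theorem~\ref{thm_s-poly=GB} is applicable to the final output.

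For correctness, I would argue that upon termination (when $P = \emptyset$) the set $G$ is a left \gsb\ for $I$. The key point is that the algorithm only terminates after every pair $\{p,q\}$ that has ever been placed in $P$ has been processed, and every pair of distinct elements of the final $G$ was placed in $P$ at some point: the initial pairs from $F$ are in $P$, and whenever a new element $r$ is adjoined to $G$, all pairs $\{g,r\}$ with $g \in G$ are added to $P$. For each such processed pair, the computed remainder $r$ was either $0$ or was adjoined to $G$; in the latter case $r$ is irreducible with respect to the \emph{current} $G$, but one must check that $\Spoly(p,q) \to_G 0$ holds with respect to the \emph{final} $G$. This follows because reduction to $0$ is preserved under enlarging $G$: if $\Spoly(p,q)$ reduces to $r$ and $r \in G_{\text{final}}$, then $r \to_{G_{\text{final}}} 0$ trivially, so $\Spoly(p,q) \to_{G_{\text{final}}} 0$. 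Hence every pair of elements of the final $G$ has S-polynomial reducing to $0$ modulo $G$, and Theorem~\ref{thm_s-poly=GB} yields that $G$ is a left \gsb\ for $I$.

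For termination, I would consider the ascending chain of left ideals $\langle \lm(G) \rangle$ generated by the leading monomials of the successive values of $G$. Each time a nonzero remainder $r$ is adjoined, $\lm(r) \in \Irr(G)$ for the current $G$, meaning $\lm(r)$ is not right divisible by any $\lm(g)$, $g \in G$; by Lemma~\ref{lemma_RightDivisible-iff} and Lemma~\ref{lemma_leadingTermProduct}, this means $\lm(r)$ is not in the left ideal generated by $\lm(G)$, so the ideal $\langle \lm(G) \rangle$ strictly increases. Since $R$ is a finite field extension of $k$, hence Noetherian, the \dd\ algebra $A$ is Noetherian by Theorem~\ref{thm_HibertBT}, so no infinite strictly ascending chain of left ideals exists. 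Therefore only finitely many elements are ever adjoined to $G$. Once $G$ stabilizes, only finitely many pairs remain to be processed and each iteration removes one pair from $P$ without adding new ones (since no new $r$ is adjoined), so $P$ is eventually exhausted and the algorithm halts.

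The main obstacle I anticipate is making the termination argument fully rigorous, specifically the claim that $\lm(r) \notin \langle \lm(G) \rangle$ strictly enlarges the monomial ideal. The subtlety is that left divisibility in $A$ does not behave like the commutative monomial case: by Lemma~\ref{lemma_leadingTermProduct}, membership of $\lm(r)$ in the left ideal $\langle \lm(G)\rangle$ is governed by \emph{right} divisibility of $\lm(r)$ by some $\lm(g)$, and the characterization of right divisibility of standard monomials is exactly Lemma~\ref{lemma_RightDivisible-iff}. I would therefore need to translate "$r$ is irreducible with respect to $G$" (i.e.\ $\lm(r) \in \Irr(G)$) into "$\lm(r) \notin \langle \lm(G) \rangle$" carefully, using the reformulation of $\Irr(G)$ given just after Definition~\ref{Def_GB} together with Lemma~\ref{lemma_leadingTermProduct}(i); the Noetherian conclusion then applies to the genuine ascending chain of left ideals in $A$.
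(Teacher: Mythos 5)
Your proposal is correct and follows essentially the same route as the paper: termination via the strictly ascending chain of leading-term ideals $\lt(G')A \subsetneq \lt(G)A$ combined with the Hilbert Basis Theorem (Theorem~\ref{thm_HibertBT}), and correctness by invoking the S-polynomial criterion (Theorem~\ref{thm_s-poly=GB}). The paper compresses the correctness step into a single sentence, whereas you spell out the details it leaves implicit (that $G$ generates $I$ at every stage, that reduction to zero persists when $G$ is enlarged, and that irreducibility of $\lm(r)$ translates into non-membership in the leading-monomial ideal via Lemma~\ref{lemma_RightDivisible-iff}); these are accurate elaborations, not a different argument.
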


\begin{proof}
 We first prove that the algorithm terminates after finitely many steps. After each pass through the main loop (i.e., the while loop), if there is a nonzero remainder $r$, then $G$ consists of the old $G$ (denoted by $G'$) together with the nonzero remainder $r$, i.e., $G=G'\cup\{r\}$. Since $r\in \Irr(G')$, it is easy to show that $\lt(G')A$, the left ideal of $A$ generated by $\lt(G')$, is properly contained in $\lt(G)A$. By the Hilbert Basis Theorem \ref{thm_HibertBT}, any ascending chain of left ideals of $A$ will stabilize. Hence, after finitely many iterations of the main loop, there is no nonzero remainder any more. Hence $P$ will be exhausted (i.e., $P=\emptyset$) after finitely many iterations of the main loop, and then the algorithm terminates and return $G$.

By Theorem \ref{thm_s-poly=GB}, the return $G$ is a \gsb\ for $I$.
\end{proof}

\begin{theorem}\label{Thm_GB_existence}
Let $A=R[\s,\D;\sigma,\delta]$ be a \dd\ algebra, where $R$ is a field extension of $k$.
Then every left ideal of $A$ has a finite left \gsb.
\end{theorem}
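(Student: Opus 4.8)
The plan is to obtain this statement as a direct consequence of the two main results already in hand: the Hilbert Basis Theorem (Theorem \ref{thm_HibertBT}) and the termination of the Left \gs\ Basis Algorithm (Theorem \ref{Thm_GBAlgorithm}). The crucial observation is simply that the coefficient ring $R$ is a field, hence trivially Noetherian (it has only the ideals $0$ and $R$), so the machinery built up in this section applies verbatim.

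First I would fix a DD-monomial ordering $>$ on $\MM$, whose existence is guaranteed by the explicit construction in Example \ref{Exam_DD-monomial_order}. Next, since $R$ is a field and therefore Noetherian, Theorem \ref{thm_HibertBT} shows that $A=R[\s,\D;\sigma,\delta]$ is left Noetherian. Consequently any left ideal $I$ of $A$ is finitely generated. If $I=0$, then the empty set is a finite left \gsb\ for $I$ and there is nothing to prove; otherwise I would choose finitely many nonzero generators $f_1,\ldots,f_l$ of $I$, discarding any zero elements from a finite generating set so as to meet the input requirement of the algorithm.

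Finally I would feed the nonzero generating set $\{f_1,\ldots,f_l\}$ into Algorithm \ref{Alg_GB}. By Theorem \ref{Thm_GBAlgorithm}, the algorithm terminates after finitely many steps and returns a finite left \gsb\ for $I$ with respect to $>$, which is exactly what is claimed.

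Since the substantive content—termination of the algorithm via the ascending chain condition on left ideals, and the Noetherianity of $A$—is already packaged inside Theorems \ref{thm_HibertBT} and \ref{Thm_GBAlgorithm}, I do not expect any genuine obstacle here; the proof is essentially a two-line corollary. The only minor matters to attend to are recording the existence of a DD-monomial ordering (via Example \ref{Exam_DD-monomial_order}) and the routine bookkeeping for the zero ideal and for ensuring that the generators supplied to the algorithm are nonzero.
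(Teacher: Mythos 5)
Your proof is correct and follows essentially the same route as the paper, which likewise derives the result directly from the Hilbert Basis Theorem \ref{thm_HibertBT} (giving finite generation of any left ideal, since $R$ is a field and hence Noetherian) and the termination of Algorithm \ref{Alg_GB} guaranteed by Theorem \ref{Thm_GBAlgorithm}. Your additional bookkeeping (fixing a DD-monomial ordering via Example \ref{Exam_DD-monomial_order}, treating the zero ideal, and discarding zero generators) merely makes explicit what the paper's one-line proof leaves implicit.
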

\begin{proof}
It follows by Theorem \ref{Thm_GBAlgorithm} and the Hilbert Basis Theorem \ref{thm_HibertBT}.
\end{proof}

The following theorem plays a key role in the Gelfand-Kirillov dimension computation in the next section.
\begin{theorem}\label{Thm_GB-linear+basis}
Let $A=R[\s,\D;\sigma,\delta]$ be a \dd\ algebra, where $R$ is a field extension of $k$.
Let $G\subseteq A$, and $I$ be the left ideal of $A$ generated by $G$. For $f\in A$, let $\overline{f}=f+I$, which belongs to the left $A$-modulo $A/I$.
Then $G$ is a left \gsb\ for $I$ if and only if the set $B=\{\overline{u}|u\in\Irr(G)\}$ is an $R$-basis of the left $A$-module $A/I$.
\end{theorem}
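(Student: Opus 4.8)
The plan is to prove the two implications separately, using the representation of Lemma~\ref{lemma_representation_of_f} as the common engine. I would first record a preliminary observation that makes the argument cleaner: the spanning half of ``basis'' holds for \emph{any} $G$ and needs no Gr\"obner hypothesis. Indeed, given $f\in A$, Lemma~\ref{lemma_representation_of_f} writes $f=\sum_i a_iu_i+\sum_j b_jv_jg_j$ with $u_i\in\Irr(G)$ and $g_j\in G$; since each $v_jg_j\in I$, reducing modulo $I$ yields $\overline{f}=\sum_i a_i\overline{u_i}$, so $B$ always spans $A/I$ over $R$. Hence the whole theorem reduces to showing that $G$ is a left \gsb\ \IFF $B$ is \emph{linearly independent} over $R$.

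For the forward direction I would assume $G$ is a left \gsb\ and prove independence. Suppose $\sum_i a_i\overline{u_i}=0$ in $A/I$ with the $u_i\in\Irr(G)$ distinct and $a_i\in R$, and set $f=\sum_i a_iu_i$. Then $f\in I$ and $\Supp(f)\subseteq\Irr(G)$. If $f\neq 0$, then $\lm(f)\in\Supp(f)\subseteq\Irr(G)$; but Definition~\ref{Def_GB} forces $\lm(f)$ to be right divisible by some $\lm(g)$, $g\in G$, which by the very definition of $\Irr(G)$ means $\lm(f)\notin\Irr(G)$, a contradiction. Thus $f=0$, every $a_i$ vanishes, and $B$ is an $R$-basis.

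For the converse I would assume $B$ is an $R$-basis and verify the defining property of a \gsb: every $0\neq f\in I$ has $\lm(f)$ right divisible by some $\lm(g)$. Applying Lemma~\ref{lemma_representation_of_f} to such an $f$ gives $f=\sum_i a_iu_i+\sum_j b_jv_jg_j$; I then set $r=\sum_i a_iu_i=f-\sum_j b_jv_jg_j$, which lies in $I$ and satisfies $\Supp(r)\subseteq\Irr(G)$, so $\overline{r}=\sum_i a_i\overline{u_i}=\overline{f}=0$. Linear independence of $B$ now kills every $a_i$, leaving $r=0$ and $f=\sum_j b_jv_jg_j$. The delicate step --- and the one I expect to be the main obstacle --- is the leading-monomial bookkeeping here: I invoke the clause of Lemma~\ref{lemma_representation_of_f} asserting that exactly one summand attains $\lm(f)$. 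Since all $u_i$-terms have disappeared, that summand must be some $v_jg_j$, and as $\lm(v_1g_1)>\cdots>\lm(v_tg_t)$ there is no top-degree cancellation, whence $\lm(f)=\lm(v_1g_1)$. Finally Lemma~\ref{lemma_leadingTermProduct} gives $\lm(v_1g_1)=\lm(v_1\lm(g_1))=h\cdot\lm(g_1)$ for some $h\in A$, so $\lm(f)$ is right divisible by $\lm(g_1)$, as required. Throughout I rely on $R$ being a field, which guarantees that the coefficients produced by Lemma~\ref{lemma_representation_of_f} are well defined and that ``$R$-basis'' carries its usual meaning.
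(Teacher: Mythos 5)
Your proof is correct and takes essentially the same route as the paper: both directions run on Lemma~\ref{lemma_representation_of_f}, with the forward direction killing the coefficients of a purported dependence via the ``$\lm(f)$ both is and is not in $\Irr(G)$'' contradiction, and the converse using independence to strip away the $\Irr(G)$-part of the representation and then Lemma~\ref{lemma_leadingTermProduct} to obtain right divisibility by $\lm(g_1)$. Your preliminary observation that $B$ spans $A/I$ for \emph{arbitrary} $G$ is a clean minor reorganization: the paper proves spanning inside the forward implication, where the Gr\"obner hypothesis is in fact never used.
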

\begin{proof}
($\Rightarrow$) Suppose that $G$ is a left \gsb\  for $I$. For any $\overline{f}=f+I\in A/I$, by Lemma \ref{lemma_representation_of_f},
\begin{eqnarray*}
\overline{f}=\sum\limits_{i=1}^{s} a_iu_i+\sum\limits_{j=1}^{t} b_jv_jg_j+I=\sum\limits_{i=1}^{s} a_iu_i+I
            =\sum\limits_{i=1}^{s} a_i(u_i+I)
            =\sum\limits_{i=1}^{s} a_i\overline{u_i},
\end{eqnarray*}
where $s,t\in \NN$,  each $a_i,b_j\in R, u_i\in \Irr(G), v_j\in \MM, g_i\in G$.
Thus, $B$ spans $A/I$ as an $R$-space.

In order to prove that $B$ is $R$-linearly independent,
suppose that
$$
a_1\overline{u_1}+a_2\overline{u_2}+\cdots+a_t\overline{u_t}=\overline{0},\  a_i\in R, \ u_i\in \Irr(G),\ i=1,\ldots,t, \ \ t\in \NN,
$$
and that $u_1{>} u_2>\cdots> u_t$. Let $f=a_1 {u_1}+a_2 {u_2}+\cdots+a_t {u_t}$. Then $\overline f=\overline0$ and thus $f\in I$.
If $a_1\not=0$ then $\lm(f)=u_1\in \Irr(G)$; but $\lm(f)\not\in \Irr(G)$ since $G$ is a left \gsb\ for $I$.
Hence $a_1=0$ and similarly $a_2=a_3=\cdots=a_t=0$. Therefore $B$ is $R$-linearly independent.

($\Leftarrow$) Suppose $B$ is an $R$-basis of  $A/I$ and $0\not= f \in I$.
It suffices to prove that $\lm(f)$ is right divisible by $\lm(g)$ for some $g\in G$. By Lemma \ref{lemma_representation_of_f}, we can write
$
f=\sum\limits_{i=1}^{s} a_iu_i+\sum\limits_{j=1}^{t} b_jv_jg_j,
$
where $s,t\in \NN$, each $a_i,b_j\in R, u_i\in \Irr(G), v_j\in \MM, g_i\in G$ and
$$
\lm(f)\geq\lm(u_1)>\cdots >\lm(u_s),\ \ \lm(f)\geq\lm(v_1g_1)>\cdots >\lm(v_tg_t).
$$
 Then $\overline0=\overline f=a_1\overline{u_1}+\cdots+a_s\overline{u_s}$.
Since $B$ is an $R$-basis of $A/I$, $a_1=\cdots=a_s=0$. Hence $\lm(f)=\lm(v_1g_1)$ and thus, by Lemma \ref{lemma_leadingTermProduct}, $\lm(f)$ is right divisible by $\lm(g_1)$.
\end{proof}

\section{Gelfand-Kirillov dimension of cyclic $A$-modules}\label{Sec_GK_A/I}
In this section, we compute the Gelfand-Kirillov dimension of cyclic modules over a \dd\ algebra.
We assume that the reader is familiar with the notions of gradings and filtrations of algebras and modules. We refer the reader to  Chapter 6 of \cite{Krause-Lenagan_2000} for more details.

Let $A$ be a graded $k$-algebra  and let $M= \bigoplus_{i\in\NN}M_i$ be a graded left $A$-module.
Then the \emph{Hilbert function} of $M$ is defined as the mapping:
$$
\HF_M:\mathbb{N}\to\mathbb{N},\ \HF_M(i)=\dim_k(M_0\oplus \cdots \oplus M_i),\ i\in \mathbb{N}.
$$
The following lemma relates the Gelfand-Kirillov dimension and the Hilbert function of a \fg\ module over a \fg\ algebra.

\begin{lemma}\label{lemma_Hilbet_GK} (\cite{Krause-Lenagan_2000}, Lemma 6.1)
If $A$ is a \fg\ $k$-algebra and $M$ is a \fg\ left $A$-module, then
$\GK(M)=\overline{\lim\limits_{i\to \infty}}\log_i \HF_M(i)$.
\end{lemma}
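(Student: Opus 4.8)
This is essentially Lemma 6.1 of \cite{Krause-Lenagan_2000}, and the plan is to reconstruct its proof by comparing an arbitrary admissible pair $(V,F)$ against the canonical graded filtration of $M$. Throughout I assume, as the surrounding definition of $\HF_M$ tacitly requires, that $A=\bigoplus_{i\in\NN}A_i$ is a \fg\ graded algebra with $\dim_k A_0<\infty$ (so that each $A_i$, and hence each $M_i$, is finite dimensional) and that $M$ is a \fg\ graded module. Writing $M_{\leq i}=M_0\oplus\cdots\oplus M_i$, we have $\dim_k M_{\leq i}=\HF_M(i)$, so the goal is to show that the supremum defining $\GK(M)$ is both attained by and bounded by the growth of $\dim_k M_{\leq i}$. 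I would prove the two inequalities separately and then absorb the resulting linear reparametrizations into the logarithmic limit.

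For the lower bound I would use the natural generators. Let $d$ be the maximal degree occurring in a finite homogeneous generating set of $A$ and set $V=A_0\oplus\cdots\oplus A_d$; this is finite dimensional and contains $1$. Since each generator has degree at most $d$, a degree count gives $A_{\leq n}\subseteq V^n$ (using $1\in V$ to pad shorter products). Similarly choose $e$ so that $F=M_0\oplus\cdots\oplus M_e$ generates $M$ over $A$. The grading then yields $M_{\leq n}\subseteq A_{\leq n}M_{\leq e}\subseteq V^nF$, because a homogeneous element of degree $j\leq n$ lies in $\sum_{l\leq e}A_{j-l}M_l$. Hence $\dim_k(V^nF)\geq \HF_M(n)$, and therefore
$$\GK(M)\ \geq\ \overline{\lim\limits_{n\to\infty}}\log_n\dim_k(V^nF)\ \geq\ \overline{\lim\limits_{n\to\infty}}\log_n\HF_M(n).$$

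For the upper bound I must control an \emph{arbitrary} pair. Given any finite dimensional $V\ni1$ and finite dimensional $F\subseteq M$, only finitely many homogeneous components appear in a basis, so $V\subseteq A_{\leq c}$ and $F\subseteq M_{\leq c'}$ for some constants $c,c'$. Then $V^n\subseteq A_{\leq cn}$ and $V^nF\subseteq A_{\leq cn}M_{\leq c'}\subseteq M_{\leq cn+c'}$, giving $\dim_k(V^nF)\leq\HF_M(cn+c')$ and hence
$$\overline{\lim\limits_{n\to\infty}}\log_n\dim_k(V^nF)\ \leq\ \overline{\lim\limits_{n\to\infty}}\log_n\HF_M(cn+c').$$

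It remains to observe that the linear reparametrization is invisible to the growth exponent: for a non-decreasing $\HF_M$ and constants $\lambda>0$, $\mu$, one has $\overline{\lim\limits_{n\to\infty}}\log_n\HF_M(\lambda n+\mu)=\overline{\lim\limits_{i\to\infty}}\log_i\HF_M(i)$. This holds because $\log(\lambda n+\mu)/\log n\to1$, so $\log_n\HF_M(\lambda n+\mu)$ and $\log_{\lambda n+\mu}\HF_M(\lambda n+\mu)$ share the same upper limit, while monotonicity of $\HF_M$ makes the arithmetic subsequence $\{\lambda n+\mu\}$ cofinal for the upper limit over all $i$. Applying this with $(\lambda,\mu)=(1,0)$ to the first display and with $(\lambda,\mu)=(c,c')$ to the second, then taking the supremum over $(V,F)$ in the latter, yields both inequalities $\GK(M)\geq\overline{\lim\limits_{i\to\infty}}\log_i\HF_M(i)$ and $\GK(M)\leq\overline{\lim\limits_{i\to\infty}}\log_i\HF_M(i)$, hence equality. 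I expect the genuine obstacle to be the upper bound rather than the lower one: the subspaces $V,F$ in the definition of $\GK$ are arbitrary and need not be homogeneous, so the substance lies in trapping them between the graded truncations $A_{\leq c}$ and $M_{\leq c'}$ and then verifying that the scaling $c$ and shift $c'$ leave the base-$i$ logarithmic upper limit unchanged.
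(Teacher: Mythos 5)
The paper does not prove this lemma at all; it is quoted directly from \cite{Krause-Lenagan_2000} (Lemma 6.1), and your reconstruction is precisely the standard two-inequality argument of that cited proof. Your write-up is correct: the homogeneous generating pair $(V,F)$ yields the lower bound, trapping an arbitrary pair inside the graded truncations $A_{\leq c}$, $M_{\leq c'}$ yields the upper bound, and the linear reparametrization $i\mapsto ci+c'$ is indeed invisible to the base-$i$ logarithmic upper limit, so there is nothing to flag.
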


From now on to the end of this section, we fix the following notations. Let $A=R[\s,\D;\sigma,\delta]$ be a \dd\ algebra of type $(m,n)$, { $m,n\in \NN$}, where $R$ is a finite field extension of $k$ (it is easy to see that if $\dim_kR=\infty$ then $\GK(M)=\infty$). Let $V=\{v_1,\ldots,v_d\}$ ($d\in \NN$) be a $k$-basis of $R$ and $I$ be a proper left ideal of $A$.

Denote the  left $A$-module $A/I$ by $M$.
Let $A_i$, $i\in\NN$, be the $R$-subspace of $A$ spanned by $\MM_i=\{\s^{\alpha}\D^{\beta}:|\alpha|+|\beta|=i\}$.
Then each $A_i$ is spanned as a $k$-space by $\{vu:v\in V, u\in \MM_i\}$ and $\{A_i\}_{i\in \NN}$ is a grading of $A$, i.e.,
$$
A=\bigoplus_{i\in \NN}A_i, \ \ \mbox{and } A_i\cdot A_j\subseteq A_{i+j}\ \mbox{ for all }i, j\in\NN.
$$
It induces a grading of the left $A$-module $M=A/I=\bigoplus_{i\in\NN} M_i$, where
$M_i= (A_i+I)/I$ is a $k$-subspace of $M$.
Recall that if $f\in A$ we denote the element $f+I$ of $M$ by $\overline f$.
\begin{proposition}\label{prop_Hilbert_function}
Let $G\subseteq A$ be a left \gsb\ for $I$ with respect to a {total degree DD-monomial ordering}. Then the following hold:

(i) The set $B_i=\{\overline{vu}:v\in V,\ u\in\Irr(G)\cap\MM_{\leq i}\}$ is a $k$-basis of $M_{\leq i}=M_0\oplus\cdots\oplus M_i$,  $i\in\NN$.

(ii) The Hilbert function of  $M$ is given by
$$
\HF_{M}(i)=d\cdot |\Irr(G)\cap \MM_{\leq i} |, \ i\in \mathbb{N}.
$$
\end{proposition}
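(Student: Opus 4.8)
The plan is to deduce the proposition from Theorem \ref{Thm_GB-linear+basis}, which already supplies an $R$-basis $\{\overline{u} : u\in\Irr(G)\}$ of $M=A/I$, and then to add a degree-tracking argument that pins down exactly which irreducible monomials contribute in degree $\leq i$. First I would record the global $k$-basis statement. Since $M$ is a left $R$-module (as $R\subseteq A$), Theorem \ref{Thm_GB-linear+basis} gives $M=\bigoplus_{u\in\Irr(G)} R\,\overline{u}$, while $V$ being a $k$-basis of $R$ gives $R=\bigoplus_{v\in V} k\,v$. Combining these, and using $v\,\overline{u}=\overline{vu}$ (valid because $v\in R\subseteq A$), the set $\{\overline{vu} : v\in V,\ u\in\Irr(G)\}$ is a $k$-basis of $M$. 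I would also note that the filtration piece $M_{\leq i}=M_0\oplus\cdots\oplus M_i$ is precisely the image of $A_{\leq i}=\bigoplus_{j\leq i}A_j$ under the quotient map, so an element of $M_{\leq i}$ is $\overline{f}$ for some $f\in A$ with $\tdeg(f)\leq i$.

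For part (i), linear independence of $B_i$ is then immediate, since $B_i$ is a subset of the global $k$-basis just described; the real content is spanning. I would take an arbitrary $\overline{f}\in M_{\leq i}$ represented by $f$ with $\tdeg(f)\leq i$, and apply Lemma \ref{lemma_representation_of_f} to write $f=\sum_p a_p w_p+\sum_q b_q t_q g_q$ with $w_p\in\Irr(G)$, $t_q\in\MM$, $g_q\in G$, $a_p,b_q\in R$, and $\lm(f)\geq\lm(w_p)$ for every $p$ (I rename the monomials of the lemma to avoid clashing with the $R$-basis $V$). Reducing modulo $I$ annihilates the second sum, so $\overline{f}=\sum_p a_p\,\overline{w_p}$. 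Because the ordering is a total degree ordering, $\lm(f)$ attains the maximal total degree occurring in $\Supp(f)$, so $\tdeg(\lm(f))=\tdeg(f)$; and from $\lm(f)\geq\lm(w_p)=w_p$ together with the total degree property one gets $\tdeg(w_p)\leq\tdeg(\lm(f))=\tdeg(f)\leq i$. Hence each $w_p\in\Irr(G)\cap\MM_{\leq i}$, and expanding every coefficient $a_p$ in the $k$-basis $V$ exhibits $\overline{f}$ as a $k$-combination of elements of $B_i$. Thus $B_i$ spans $M_{\leq i}$, and being also $k$-linearly independent, it is a $k$-basis.

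For part (ii), I would simply count $B_i$. Part (i) yields $\dim_k M_{\leq i}=|B_i|$, and since $B_i$ is $k$-linearly independent the assignment $(v,u)\mapsto\overline{vu}$ is injective on $V\times(\Irr(G)\cap\MM_{\leq i})$, so $|B_i|=d\cdot|\Irr(G)\cap\MM_{\leq i}|$. Therefore $\HF_{M}(i)=\dim_k(M_0\oplus\cdots\oplus M_i)=\dim_k M_{\leq i}=d\cdot|\Irr(G)\cap\MM_{\leq i}|$.

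The step I expect to be the main obstacle is the degree bound $\tdeg(w_p)\leq i$ in the spanning argument: this is exactly where the hypothesis that $>$ is a \emph{total degree} DD-monomial ordering is essential, since I need both that $\lm(f)$ realizes $\tdeg(f)$ and that a strictly larger total degree would force a strictly larger monomial in the order, contradicting $\lm(f)\geq w_p$. The remaining bookkeeping, namely passing from $R$-coefficients to $k$-coefficients via $V$ and verifying $M_{\leq i}$ is the image of $A_{\leq i}$, is routine.
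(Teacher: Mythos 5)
Your proof is correct and follows essentially the same route as the paper's: Theorem \ref{Thm_GB-linear+basis} for $k$-linear independence, Lemma \ref{lemma_representation_of_f} together with the total-degree property of the ordering for spanning $M_{\leq i}$, and then a count of $B_i$ for part (ii). The only cosmetic difference is that the paper first establishes the $R$-basis $\{\overline{u}:u\in\Irr(G)\cap\MM_{\leq i}\}$ and proves distinctness of these classes directly from the Gr\"obner--Shirshov property (via $u-v\in I$), whereas you deduce both from the global linear independence statement; the content is the same.
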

\begin{proof}
(i) First we prove that $B_i'=\{\overline{u}:  u\in\Irr(G)\cap\MM_{\leq i}\}$ is an $R$-base of $M_{\leq i}$ as a left $R$-module. By Theorem \ref{Thm_GB-linear+basis}, $B'_i$ is linearly independent over $R$ for any $i\in \NN$. Thus it suffices to prove that $B_i'$ spans $M_{\leq i}$ as a left $R$-module.
Note that $M_{\leq i}=\{\overline f\in M:f\in A, \tdeg f\leq i\}$. For $f\in A$ with $\tdeg f\leq i$, by Lemma \ref{lemma_representation_of_f},
$
\overline f=\sum\limits_{j=1}^{s} a_j\overline{u_j},
$
where $s\in \NN$,  each $a_j \in R, u_j\in \Irr(G)$ and
$
\lm(f)\geq\lm(u_1)>\cdots >\lm(u_s).
$
Since $>$ is a total degree DD-monomial ordering, for $1\leq j\leq s$, $\tdeg u_j\leq \tdeg f\leq i$ and thus $\overline{u_j}\in B'_i$.
Hence $B'_i$ spans $M_i$.

Since $V$ is a $k$-basis of $R$, we have that $B_i=\{\overline{vu}:v\in V,\ u\in\Irr(G)\cap\MM_{\leq i}\}$ is a $k$-basis of $M_{\leq i}$.

(ii) By (i), it is sufficient to show that $|B'_i|=|\Irr(G)\cap \MM_{\leq i} |$.
It is clear that $|B'_i|\leq |\Irr(G)\cap \MM_{\leq i}|$.
For the other direction, for $u,v\in \Irr(G)\cap \MM_{\leq i}$ with $\overline u=\overline v$, we want to prove that $u=v$. If $u\neq v$, without loss of generality, we suppose $u>v$.
Then $0\neq f=u-v\in I$ and thus $u=\lm f\not\in \Irr(G)$ since $G$ is a \gsb\ for $I$, a contradiction.
Hence $u=v$. Thus $|B_i'|= |\Irr(G)\cap \MM_{\leq i}|$.
\end{proof}

For convenience, denote $x_i=S_i, x_{m+j}=D_j$ for $1\leq i\leq m, 1\leq j\leq n$ and let $l=m+n$.
Denote $X^{\alpha}=x_1^{\alpha_1}x_2^{\alpha_2}\cdots x_l^{\alpha_l}$ for $\alpha=(\alpha_1,\ldots,\alpha_l)\in\NN^l$.
Then ${\MM=\{X^{\alpha}:\alpha\in\NN^{l}\}}$.
For $u=X^{\alpha}\in\MM$ and $p\in\mathbb{N}$, we define that (cf., Section 9.3 of \cite{Becker-Weispfenning_1993})
$$
\topp_p(u)=\{i\in\NN:1\leq i\leq l, \alpha_i\geq p\}
$$
and
$$
\sh_p(u)=X^{\beta}, \ \mbox{where } \beta=(\beta_1,\ldots,\beta_l)\in\NN^l,\beta_i=\min\{p,\alpha_i\}, 1\leq i\leq l,
$$
i.e., $\topp_p(u)$ is the set of indices where ``$u$ tops $p$'' and $\sh_p(u)$ is ``$t$ shaved at $p$''.
For $W\subseteq \MM$, we define a relation $\sim_p$ on $W$  as follows: for any $u,v\in W$,
$$
u\sim_p v \mbox{ if } \sh_p(u)=\sh_p(v).
$$
Then we have the following lemma, whose proof is straightforward.
\begin{lemma}\label{lemma_equi_relation_on_W}
Let $W\subseteq \MM$ and $p\in\NN$. Then

(i) $\sim_p$ is an equivalence relation on $W$.

(ii) Let $[u]_p=\{v\in W:u\sim_pv\}$ and $W/_{\sim_p}=\{[u]_p: u\in W\}$. If $\sh_p(u)\in W$ for all $u\in W$, then the set
$$
W_p=\{u\in W:\sh_p(u)=u\}
$$
 is a set of normal forms for $W/_{\sim_p}$ (i.e., a system of unique representatives for $W/_{\sim_p}$). The set $W_p$ can also be described as
$$W_p=\{X^{\alpha}\in W: \alpha_i\leq p, \ 1\leq i\leq l\}.$$

(iii) For any $u=X^{\alpha}\in W_p$,
$$
[u]_p=\{X^{\beta}\in W: \alpha\leq \beta, \ \alpha_i=\beta_i \mbox{ for } i\not\in \topp_p(u)\}.
$$
\end{lemma}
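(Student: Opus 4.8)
The plan is to treat the three parts in order, using throughout the single structural observation that the map $\sh_p\colon\MM\to\MM$ is \emph{idempotent}: since $\min\{p,\min\{p,\alpha_i\}\}=\min\{p,\alpha_i\}$ for every $i$, we have $\sh_p(\sh_p(u))=\sh_p(u)$, and moreover $\sh_p(u)=u$ holds precisely when $\min\{p,\alpha_i\}=\alpha_i$ for all $i$, i.e.\ when $\alpha_i\le p$ for all $i$. For part (i) I would simply note that $u\sim_p v$ is by definition the pullback of equality along $\sh_p$, namely $\sh_p(u)=\sh_p(v)$; reflexivity, symmetry and transitivity are then immediate from the corresponding properties of equality, so no further work is needed.

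For part (ii), the alternative description $W_p=\{X^{\alpha}\in W:\alpha_i\le p,\ 1\le i\le l\}$ follows at once from the fixed-point characterization of $\sh_p$ above. To see that $W_p$ is a system of unique representatives under the hypothesis $\sh_p(u)\in W$ for all $u\in W$, I would argue existence and uniqueness separately. For existence, given $u\in W$ the element $\sh_p(u)$ lies in $W$ by hypothesis, is a fixed point of $\sh_p$ hence lies in $W_p$, and satisfies $\sh_p(\sh_p(u))=\sh_p(u)$, i.e.\ $\sh_p(u)\sim_p u$; thus each class meets $W_p$. For uniqueness, if $w,w'\in W_p$ with $w\sim_p w'$ then $\sh_p(w)=\sh_p(w')$, and since $w,w'$ are fixed points this reads $w=w'$; hence each class meets $W_p$ in at most one point.

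Part (iii) is the only part requiring a small computation, and is where I would be most careful. Fix $u=X^{\alpha}\in W_p$, so $\alpha_i\le p$ for all $i$; then $X^{\beta}\in[u]_p$ means $\sh_p(X^{\beta})=\sh_p(u)=u$, i.e.\ $\min\{p,\beta_i\}=\alpha_i$ for every $i$. I would analyze this index by index. If $i\notin\topp_p(u)$ then $\alpha_i<p$, and $\min\{p,\beta_i\}=\alpha_i<p$ forces $\beta_i=\alpha_i$. If $i\in\topp_p(u)$ then $\alpha_i\ge p$, but also $\alpha_i\le p$ because $u\in W_p$, so $\alpha_i=p$; the equation $\min\{p,\beta_i\}=p$ is then equivalent to $\beta_i\ge p=\alpha_i$. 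Collecting the two cases shows the membership condition is exactly $\beta_i=\alpha_i$ for $i\notin\topp_p(u)$ together with $\beta_i\ge\alpha_i$ for $i\in\topp_p(u)$, which is precisely $\alpha\le\beta$ with $\alpha_i=\beta_i$ for $i\notin\topp_p(u)$, matching the stated description. The one subtlety to flag is the step $\alpha_i=p$ for $i\in\topp_p(u)$: it uses crucially that the chosen representative lies in $W_p$, so that $\topp_p(u)$ records exactly the coordinates saturated at $p$ rather than merely exceeding it. Apart from this observation the argument is entirely routine, consistent with the paper's remark that the proof is straightforward.
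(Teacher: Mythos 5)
Your proof is correct. The paper itself omits the argument, stating only that the proof is straightforward; your write-up supplies exactly the routine details one would expect -- the idempotence of $\sh_p$ for parts (i) and (ii), and the index-by-index analysis for part (iii), including the genuinely relevant observation that $u\in W_p$ forces $\alpha_i=p$ (not merely $\alpha_i\geq p$) on $\topp_p(u)$, which is what makes the description of $[u]_p$ come out as stated.
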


Now we are in a position to prove our main theorem in this section.
\begin{theorem}\label{thm_GK=degHilbertPoly}
Let $R$ be a finite field extension of $k$. Let $A=R[\s,\D;\sigma,\delta]$ be a \dd\ algebra, $I$ be a left ideal of $A$ and $G$ be a finite \gsb\ for $I$ with respect to a total degree DD-monomial ordering.
Set $p=\max\{{\tdeg_{x_i}}(\lm(g)):g\in G, 1\leq i\leq l\}.$
Then

(i) There exists a unique polynomial $h\in\QQ[x]$ such that the Hilbert function $\HF_{M}$ of the left $A$-module $M=A/I$ satisfies
 $\HF_{M}(t)=h(t) \mbox{ for all } t\geq lp.$

(ii) The Gelfand-Kirillov dimension of  $M$ is equal to the degree of $h$, which is given as,
$$
\GK(M)=\deg h=\max\{|\topp_p(u)|:u\in \Irr(G)\cap \MM_{\leq t}, \sh_p(u)=u\}\mbox{ for any }t\geq lp.
$$
\end{theorem}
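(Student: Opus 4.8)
The plan is to turn the entire statement into a purely combinatorial count of standard monomials and then to exploit the fact that such counts are eventually polynomial. By Proposition \ref{prop_Hilbert_function}(ii) we have $\HF_M(t)=d\cdot|\Irr(G)\cap\MM_{\leq t}|$ with $d=\dim_k R$, so it suffices to analyze $N(t):=|\Irr(G)\cap\MM_{\leq t}|$. Identifying each $u=X^{\alpha}\in\MM$ with its exponent vector $\alpha\in\NN^l$, Lemma \ref{lemma_RightDivisible-iff} says that $X^{\alpha}$ is right divisible by $\lm(g)$ exactly when $\alpha\geq\gamma_g$ componentwise, where $\gamma_g$ is the exponent vector of $\lm(g)$. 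Hence $\Irr(G)=\{X^{\alpha}:\alpha\not\geq\gamma_g\text{ for all }g\in G\}$ is downward closed: if $X^{\alpha}\in\Irr(G)$ and $\beta\leq\alpha$ then $X^{\beta}\in\Irr(G)$. In particular $\sh_p(u)\in\Irr(G)$ for every $u\in\Irr(G)$, so Lemma \ref{lemma_equi_relation_on_W} applies with $W=\Irr(G)$, partitioning $\Irr(G)$ into the finitely many classes $[u]_p$ indexed by $u\in W_p=\{X^{\alpha}\in\Irr(G):\alpha_i\leq p\}$.

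The crux is to show that for each $u=X^{\alpha}\in W_p$ the class $[u]_p$ is the full ``free-growth box''
$$\{X^{\beta}:\beta_i=\alpha_i\text{ for }i\notin\topp_p(u),\ \beta_i\geq p\text{ for }i\in\topp_p(u)\},$$
that is, that the membership constraint $X^{\beta}\in\Irr(G)$ appearing in Lemma \ref{lemma_equi_relation_on_W}(iii) is automatic. Here the choice of $p$ is essential: since $p\geq\tdeg_{x_i}(\lm(g))=(\gamma_g)_i$ for all $g$ and $i$, any relation $\gamma_g\leq\beta$ would force $(\gamma_g)_i\leq\min\{p,\beta_i\}$ for every $i$, and the right-hand side is precisely the exponent vector of $\sh_p(X^{\beta})=u$; thus $\gamma_g\leq\alpha$, contradicting $u\in\Irr(G)$. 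I expect this step, which ties the combinatorics of the classes to the defining bound $p$, to be the main obstacle.

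With the boxes identified, I would count $|[u]_p\cap\MM_{\leq t}|$ directly. Writing $q_u=|\topp_p(u)|$ and noting that $\tdeg(u)=\sum_i\alpha_i$ is the minimal degree occurring in $[u]_p$ (attained at $u$ itself), the elements of the box of total degree at most $t$ correspond to $\eta\in\NN^{q_u}$ with $|\eta|\leq t-\tdeg(u)$, so
$$|[u]_p\cap\MM_{\leq t}|={{t-\tdeg(u)+q_u}\choose{q_u}},$$
which by Lemma \ref{lemma_RationalPoly} is a polynomial in $t$ of degree $q_u$ with rational coefficients and positive leading coefficient, and is the exact count once $t\geq\tdeg(u)$. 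Since $u\in W_p$ forces $\tdeg(u)\leq lp$, all these formulas hold simultaneously for $t\geq lp$, and summing over the finite set $W_p$ gives $N(t)=\sum_{u\in W_p}{{t-\tdeg(u)+q_u}\choose{q_u}}$ for $t\geq lp$. Setting $h=d\cdot\sum_{u\in W_p}{{x-\tdeg(u)+q_u}\choose{q_u}}\in\QQ[x]$ proves (i), and uniqueness follows because two polynomials agreeing on infinitely many integers coincide.

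For (ii), Lemma \ref{lemma_Hilbet_GK} gives $\GK(M)=\overline{\lim}_{t\to\infty}\log_t\HF_M(t)=\deg h$, since $\log_t$ of a polynomial of degree $D$ tends to $D$. Because each summand has positive leading coefficient, there is no cancellation of top-degree terms, so $\deg h=\max_{u\in W_p}q_u=\max_{u\in W_p}|\topp_p(u)|$. Finally, $W_p=\{u\in\Irr(G):\sh_p(u)=u\}$ and every such $u$ satisfies $\tdeg(u)\leq lp\leq t$, so for any $t\geq lp$ one has $W_p=\{u\in\Irr(G)\cap\MM_{\leq t}:\sh_p(u)=u\}$, which yields the stated formula for $\GK(M)$.
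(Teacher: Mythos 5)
Your proof is correct and follows essentially the same route as the paper's: reduce to counting $\Irr(G)\cap\MM_{\leq t}$ via Proposition \ref{prop_Hilbert_function}, partition by the shaving relation $\sim_p$ of Lemma \ref{lemma_equi_relation_on_W}, count each class by a stars-and-bars binomial coefficient, and conclude via Lemmas \ref{lemma_RationalPoly} and \ref{lemma_Hilbet_GK}. The only difference is that you explicitly verify (from the definition of $p$ and Lemma \ref{lemma_RightDivisible-iff}) that every monomial in the box over $u\in W_p$ automatically lies in $\Irr(G)$, a point the paper uses implicitly when asserting $|[u]_p|=\binom{t-\tdeg(u)+|\topp_p(u)|}{|\topp_p(u)|}$, so your write-up is, if anything, slightly more complete.
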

\begin{proof}(i)
(Existence)
Let $t\in\NN$ and $t\geq lp$.
We will construct the desired polynomial $h$ by counting the elements of the set {$W=\Irr(G)\cap \MM_{\leq t}$}.
By Proposition \ref{prop_Hilbert_function}, $
\HF_{A/I}(t)=d\cdot |W|.
$
Note that if $u\in W$ then $\sh_p(u)\in W$.
By Lemma \ref{lemma_equi_relation_on_W} (ii),
$
W_p=\{u\in W:\sh_p(u)=u\}
$
is a set of normal forms of $W/_{\sim_p}$ and thus
\begin{eqnarray}\label{eqn_CardW}
|W|=\sum_{u\in W_p}|[u]_{ p }|,
\end{eqnarray}
where $[u]_{p}$ is the equivalence class of $u$ with respect to $\sim_p$.
By Lemma \ref{lemma_equi_relation_on_W} (iii),
supposing $u=X^{\alpha}\in W_p$,
$$
[u]_p=\{X^{\beta}\in W: \alpha\leq \beta, \ \alpha_i=\beta_i \mbox{ for } i\not\in \topp_p(u)\}.
$$
Hence,
\begin{eqnarray}\label{eqn_Card[u]}
|[u]_{p}|={{t-\tdeg(u)+|\topp_p(u)|\choose {|\topp_p(u)|}}},
\end{eqnarray}
which is a polynomial in $t$ of degree $|\topp_p(u)|$.
Now, by (\ref{eqn_CardW}) and (\ref{eqn_Card[u]}),
\begin{eqnarray*}
|W|=\sum_{u\in W_p}{{t-\tdeg(u)+|\topp_p(u)|\choose {|\topp_p(u)|}}}.
\end{eqnarray*}
Let
\begin{eqnarray*}
h(x)=d\cdot\sum_{u\in W_p}{{x-\tdeg(u)+|\topp_p(u)|\choose {|\topp_p(u)|}}}.
\end{eqnarray*}
Then $h$ is a rational polynomial of degree $\max\{|\topp_p(u)|:u\in W_p\}$ such that $\HF_M(t)=h(t)$ for all $t\geq lp$.

(Uniqueness) Suppose $h'\in\QQ[x]$ and $\HF_M(t)=h'(t)$ for all $t\geq lp$.
Then $h-h'\in\QQ[x]$ and $h(t)-h'(t)=0$ for infinitely many $t$. Hence $h-h'=0$, or, $h=h'$.

\

(ii) It follows from part (i) and Lemma \ref{lemma_Hilbet_GK}.
\end{proof}

Theorem \ref{thm_GK=degHilbertPoly} together with Algorithm \ref{Alg_GB} gives an algorithm to compute the Gelfand-Kirillov dimension of a cyclic module over a \dd\ algebra over $R$.

\section{\gsbs\ for $A$-modules}
In this section, we develop the \gsb\ theory of \fg\ modules over a \dd\ algebra, which will be used to compute the Gelfand-Kirillov dimension of a \fg\ module over a \dd\ algebra in the next section.

Let $R$ be a finite field extension of $k$, $A=R[\s,\D;\sigma,\delta]$ be a \dd\ algebra of type $(m,n)$, and let $l=m+n$. 
Let $A^p$ ( $p\geq 1$) be the free left $A$-module of rank $p$ with the standard $A$-basis
$$\vect e_1=(1,0,0,\ldots,0),\vect e_2=(0,1,0,\ldots,0),\ldots,\vect e_p=(0,\ldots,0,1).$$
A \emph{monomial} in $A^p$ is an element of the form $\vect m=X^{\alpha}\vect e_i=x_1^{\alpha_1}\cdots x_l^{\alpha_l}\vect e_i$, where $\alpha=(\alpha_1,\ldots,\alpha_l)\in\NN^l$ and $1\leq i\leq p$.
The \emph{total degree} of $\vect m$ is defined as $\tdeg(\vect m)=\tdeg(X^{\alpha})=|\alpha|$.
Then the set
$\NNN=\{X^{\alpha}\vect e_i:\alpha\in \NN^l, 1\leq i\leq p\}$ of monomials in $A^p$ is an $R$-basis of $A^p$.
Thus every element $f\in A^p$ can be written in a unique way as an $R$-linear combination of monomials
$$
f=\sum_{i=1}^q c_i\vect m_i,\ \  0\neq c_i\in R, q\in \NN, \vect m_i\in \NNN.
$$
The \emph{total degree} of $f$ is defined as $\tdeg(f)=\max\{\tdeg(\vect m_i):1\leq i\leq q\}.$

We say $\vect m=X^{\alpha}\vect e_i$ is right (left, respectively) divisible by $\vect n=X^{\beta}\vect e_j$, $1\leq i,j\leq p$, if and only if $i=j$ and $X^{\alpha}$ is right (left, respectively) divisible by $X^{\beta}$, equivalently, if and only if $i=j$ and $\alpha_s\geq \beta_s$ for all $1\leq s\leq l$.
Suppose $f=\sum_{i=1}^q c_i\vect m_i\in A^p$ and $\vect n,\vect n_1,\ldots, \vect n_t\in \NNN$ for some $t\geq 1$. If each $\vect m_i$ is not right divisible by $\vect n$, then we say that $f$ is irreducible with respect to  $\vect n$. If $f$ is irreducible with respect to  every $\vect n_j\ (1\leq j\leq t)$ then we say that $f$ is irreducible with respect to  $\{\vect n_1,\ldots, \vect n_t\}$.

\begin{proposition}\label{prop_submodule_A^P_fg}
Every submodule of $A^p$ is \fg.
\end{proposition}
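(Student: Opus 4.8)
The plan is to deduce this from the fact that $A$ is left Noetherian and then run the standard induction on the rank $p$. Since $R$ is a field, it is Noetherian, so by the Hilbert Basis Theorem (Theorem \ref{thm_HibertBT}) the \dd\ algebra $A$ is Noetherian; in particular $A$ is Noetherian as a left module over itself, so every left ideal, i.e. every submodule of $A^1=A$, is \fg. This settles the base case $p=1$.

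For the inductive step I would use that the class of Noetherian (equivalently, \fg-submodule) modules is closed under extensions. Consider the projection $\pi\colon A^p\to A$ onto the last coordinate, whose kernel is the copy of $A^{p-1}$ spanned by $\vect e_1,\dots,\vect e_{p-1}$, giving a short exact sequence $0\to A^{p-1}\to A^p\xrightarrow{\pi} A\to 0$. Given a submodule $N\subseteq A^p$, set $N'=N\cap A^{p-1}$ and $N''=\pi(N)$. Since $\pi$ is $A$-linear, $N''$ is a left ideal of $A$, hence \fg\ by the base case, say generated by $\pi(h_1),\dots,\pi(h_s)$ with $h_1,\dots,h_s\in N$; and $N'$ is a submodule of $A^{p-1}$, hence \fg\ by the induction hypothesis, say by $g_1,\dots,g_r$.

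The key claim is that $\{g_1,\dots,g_r,h_1,\dots,h_s\}$ generates $N$. Given $x\in N$, write $\pi(x)=\sum_{j=1}^s a_j\pi(h_j)$ with $a_j\in A$; then $x-\sum_{j=1}^s a_j h_j$ lies in $\ker\pi\cap N=N'$, so it is an $A$-combination of the $g_i$. Collecting terms exhibits $x$ as an $A$-combination of the chosen generators, so $N$ is \fg. The argument is entirely formal once $A$ is known to be Noetherian, so there is no serious obstacle; the only point requiring attention is the lifting step, namely verifying that the corrected element $x-\sum_j a_j h_j$ genuinely lands in $N'=\ker\pi\cap N$ rather than merely in $\ker\pi$.

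Alternatively, I could mirror the \gsb\ development of Section 3 and argue directly in module-theoretic terms: extend the notion of leading monomial to $A^p$ via the $R$-basis $\NNN$, observe that the leading monomials of a submodule $N$ generate a monomial submodule inside the free $R$-structure on $\NNN$, and invoke the stabilization of ascending chains of leading-term submodules (as in the termination argument of Theorem \ref{Thm_GBAlgorithm}) to extract a finite generating set. This constructive route is more in keeping with the rest of the paper and foreshadows a module \gsb, but the short exact sequence induction above is the most economical way to establish the bare statement.
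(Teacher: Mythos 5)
Your proposal is correct and takes essentially the same route as the paper: both deduce from the Hilbert Basis Theorem (Theorem \ref{thm_HibertBT}) that $A$ is left Noetherian and then conclude that $A^p$ is a Noetherian module, so every submodule is \fg. The only difference is that you spell out the standard short-exact-sequence induction proving $A^p$ is Noetherian, a fact the paper simply invokes without proof.
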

\begin{proof}
By Hilbert Basis Theorem \ref{thm_HibertBT}, $A$ is noetherian and thus so is $A^p$.
Hence every submodule of $A^p$ is \fg.
\end{proof}

As for monomials in $A$, we can similarly define \emph{leading monomial} $\lm(f)$, \emph{leading coefficient} $\lc(f)$, \emph{leading term} $\lt(f)$, \emph{left quotient} $\LQ{f}{g}$ and \emph{irreducible monomials} $\Irr(G)$ with respect to $G$, for $f\in A^p$, $g\in A$ and $G\subseteq A^p$.

\begin{definition}
A \emph{differential difference monomial ordering}, DD-monomial ordering for short, on $\NNN$ is a well-ordering $>$ on $\NNN$ such that
\begin{center}
if $\vect m>\vect n$
then $\lm(f \vect m)> \lm(f\vect n)$ for all $\vect  m,\vect n\in \NNN$ and ${0\neq f\in A}$.
\end{center}
\end{definition}

\begin{example}\label{exam_TOP_POT}\upshape
Let $>$ be a DD-monomial ordering on $\MM$ (recall that $\MM$ is the monomials of $A$). Then $>$ can be extended to a DD-monomial ordering on $\NNN$ as follows.

(1) We say $X^{\alpha}\vect e_i>_1X^{\beta}\vect e_j$ if and only if $X^{\alpha}>X^{\beta}$, or $X^{\alpha}=X^{\beta}$ and $i<j$. It is easy to see that $>_1$ is a DD-monomial ordering on $\NNN$. We call $>_1$ the TOP extension of $>$, where TOP stands for ``term over position'', following terminology in  \cite{adams1994introduction}.

(2) Similarly, we can introduce the POT (``position over term'') extension $>_2$ of $>$. Define $X^{\alpha}\vect e_i>_1X^{\beta}\vect e_j$ if and only if $i<j$ or ${i=j}$ and $X^{\alpha}>X^{\beta}$. It is easy to see that $>_2$ is also a DD-monomial ordering on $\NNN$.
\end{example}

Note that if the ordering $>$ in the above example is a total degree DD-monomial ordering then so is the TOP extension $>_1$.

\

The following lemma is similar to Lemma \ref{lemma_remainderUnique} (i) and it can be proved by induction on $\lm(f)$.
\begin{lemma}
\label{lemma_DivisionAlgorithm_A^p}
Let $>$ be a DD-monomial ordering on $\NNN$ and let $f_1,\ldots,f_q\in A^p$, $q\in\NN$.
Then every element $f\in A^p$ can be written as
$$
f=a_1f_1+\cdots+a_qf_q+r,
$$
where each $a_i\in A$, each $\lm(a_if_i)\leq \lm(f)$, $r\in A^p$ and either $r=0$ or $r$ is irreducible with respect to  $\{\lm(f_j):1\leq j\leq q\}$.
\end{lemma}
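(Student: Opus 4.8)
The plan is to mirror the proof of Lemma \ref{lemma_remainderUnique}(i), running a well-founded induction on $\lm(f)$ with respect to the well-ordering $>$ on $\NNN$. Before the induction I would record the module analogue of Lemma \ref{lemma_leadingTermProduct}: for any $a\in A$ and $0\neq g\in A^p$ one has $\lm(ag)=\lm(a\cdot\lm(g))$ and $\lt(ag)=\lt(a\cdot\lt(g))$. This follows by writing $g=\sum_k c_k X^{\beta_k}\vect e_{i_k}$ with distinct monomials and $\lm(g)$ the $>$-largest, by noting that left multiplication by $a$ fixes each position $\vect e_{i_k}$ and reduces on each coordinate to Lemma \ref{lemma_leadingTermProduct} in $A$, and then by invoking the left-admissibility of the DD-monomial ordering on $\NNN$ to conclude that the $>$-largest surviving monomial is exactly $\lm(a\cdot\lm(g))$.

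For the induction, the base case $f=0$ is trivial: take all $a_i=0$ and $r=0$. For $f\neq 0$ I distinguish two cases according to whether $\lm(f)$ is right divisible by some $\lm(f_j)$. If it is, say $\lm(f)=X^{\alpha}\vect e_i$ is right divisible by $\lm(f_j)=X^{\beta}\vect e_i$, then $X^{\alpha}$ is right divisible by $X^{\beta}$ in $A$ (Lemma \ref{lemma_RightDivisible-iff}); combining this with the multiplicativity of leading terms recorded above and the invertibility of $\lc(f_j)\in R$ (here it is essential that $R$ is a field), I can choose $q\in A$ with $\lt(qf_j)=\lt(f)$, so that $f-qf_j$ has strictly smaller leading monomial. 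If $\lm(f)$ is not right divisible by any $\lm(f_j)$, I instead peel off the leading term and pass to $f-\lt(f)$, whose leading monomial is again strictly smaller.

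In either case the induction hypothesis applied to the smaller element yields a representation $\sum_i a_i f_i + r'$ with each $\lm(a_i f_i)$ bounded by the smaller leading monomial and with $r'$ irreducible with respect to $\{\lm(f_j):1\leq j\leq q\}$. Reassembling gives $f=qf_j+\sum_i a_i f_i + r'$ in the first case, where every leading monomial of an $A$-multiple is $\leq\lm(f)$ since $\lm(qf_j)=\lm(f)$; in the second case $f=\sum_i a_i f_i + (r'+\lt(f))$, and $r:=r'+\lt(f)$ remains irreducible because $\lm(f)$ is not right divisible by any $\lm(f_j)$ and $\lm(r')<\lm(f)$. Termination is automatic: the leading monomials strictly decrease and $>$ is a well-ordering, so the recursion cannot descend forever.

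The step I expect to require the most care is the module analogue of Lemma \ref{lemma_leadingTermProduct} together with the explicit construction of the quotient $q$: one must track how left multiplication interacts with the coefficients in $R$ (which get twisted by the $\sigma_i$ via Lemma \ref{lemma_DS=S(D)}) while keeping the position index $\vect e_i$ fixed, and confirm that the cancellation $\lt(qf_j)=\lt(f)$ is exact. Once this leading-term bookkeeping is in place, the two-case induction is routine.
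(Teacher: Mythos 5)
Your proof is correct and takes essentially the same approach as the paper, whose own proof is just the remark that the lemma follows by induction on $\lm(f)$ in the manner of Lemma \ref{lemma_remainderUnique}(i) (and Lemma \ref{lemma_representation_of_f}). Your two-case induction (subtract a suitable $A$-multiple of $f_j$ when $\lm(f)$ is right divisible by some $\lm(f_j)$, otherwise peel off $\lt(f)$), supported by the module analogue of Lemma \ref{lemma_leadingTermProduct} and the invertibility of leading coefficients in $R$, is exactly the intended argument.
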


In the above lemma, $r$ is called a \emph{remainder} of $f$ on division by $\{f_1,\ldots,f_q\}$. We say that $f$ is reduced to $r$ by $\{f_1,\ldots,f_q\}$.

\begin{definition}\label{Def_GB_A^p}
Let $>$ be a DD-monomial ordering on  $\NNN$ and $M$ be a submodule of $A^p$.
A  subset $G\subseteq M$ is called a left \gsb\ for $M$ with respect to $>$ if, for any $0\neq f\in M$, there exists $g\in G$ such that $\lm(f)$ is right divisible by $\lm(g)$.
\end{definition}

Let $\vect m=X^{\alpha}\vect e_i$ and $\vect n=X^{\beta}\vect e_j$ be two monomials.
Define the \emph{least common left multiple} of $\vect m$ and $\vect n$ as
\[
\lclm(\vect m,\vect n)=\left\{
\begin{array}{ll}
  0   &  \mbox{if } i\neq j \\
  \lclm(X^{\alpha},X^{\beta})\ \vect e_i   &     \mbox{if } i=j
\end{array}
\right..
\]

Fix a monomial ordering on $A^p$. Suppose $f,g\in A^p$ and $\vect m=\lclm (\lm(f),\lm(g))$. Then the {\it S-vector} of $f$ and $g$ is defined as
\[
\Svect(f,g)=\LQ{\vect m}{{\lt}(f)}f-\LQ{\vect m}{\lt(g)}g.
\]
Then we have the following criterion for left \gsbs.
\begin{theorem}\label{thm_GSB_criterion-A^p}
Let $G$ be a  subset of $A^p$ and let $M$ be the submodule of $A^p$ generated by $G$. Then $G$ is a left \gsb\ for $M$ if and only if $\Svect(g_i,g_j)$ can be reduced to $0$ by $G$ for all $g_i,g_j\in G$.
\end{theorem}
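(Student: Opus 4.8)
The plan is to follow the proof of Theorem \ref{thm_s-poly=GB} almost verbatim, since the only genuinely new feature in the module setting is the position index $\vect e_i$ carried by a module monomial, and this index is inert under left multiplication by $A$. First I would record the module analogues of the two auxiliary facts used there. One is a version of Lemma \ref{lemma_leadingTermProduct}: for $h\in A$ and $g\in A^p$ one has $\lm(hg)=\lm(h\cdot\lm(g))=h''\cdot\lm(g)$ and $\lt(hg)=\lt(h\cdot\lt(g))$ for a unique $h''\in A$; this localizes to the $\vect e_i$-component of $\lm(g)$ and reduces to the algebra statement. The other is a version of the telescoping Lemma \ref{lemma_key2}: if $f=c_1f_1+\cdots+c_sf_s$ with $c_i\in R$, all $\lm(f_i)=\vect u$ for a single $\vect u\in\NNN$, and $\lm(f)<\vect u$, then $f=\sum_{i,j}d_{ij}\Svect(f_i,f_j)$ for some $d_{ij}\in R$; the same telescoping manipulation works because all $f_i$ share the same position.

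For the forward implication, each $\Svect(g_i,g_j)$ is an $A$-linear combination of $g_i,g_j\in G$, hence lies in $M$. By Lemma \ref{lemma_DivisionAlgorithm_A^p} it reduces by $G$ to a remainder $r$ irreducible with respect to $\{\lm(g):g\in G\}$, with $r\in M$. Were $r\neq 0$, the \gsb\ property would supply a $g\in G$ with $\lm(r)$ right divisible by $\lm(g)$, contradicting irreducibility of $r$; hence $r=0$, i.e. $\Svect(g_i,g_j)$ reduces to $0$ by $G$.

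For the converse, assume every $\Svect(g_i,g_j)$ reduces to $0$ by $G$ and take $0\neq f\in M$. Write $f=\sum_{i=1}^t h_ig_i$ with $h_i\in A$, $g_i\in G$ and $\lm(h_1g_1)\geq\cdots\geq\lm(h_tg_t)$, and among all such expressions choose one minimizing $\vect u=\lm(h_1g_1)$. It suffices to show $\lm(f)=\vect u$, for then $\lm(f)=\vect u=\lm(h_1g_1)=h'\lm(g_1)$ by the product lemma, so $\lm(f)$ is right divisible by $\lm(g_1)$. Suppose for contradiction $\lm(f)<\vect u$. Replacing each $h_ig_i$ with $\lm(h_ig_i)=\vect u$ by $h_i'g_i$, where $\lt(h_ig_i)=h_i'\lt(g_i)$, splits $f$ into a top part $\sum_{\lm(h_ig_i)=\vect u}h_i'g_i$, whose leading monomial is $<\vect u$, plus terms of leading monomial $<\vect u$. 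All $h_i'g_i$ in the top part share the single leading monomial $\vect u$, hence the same position index, so $\lclm(\lm(h_i'g_i),\lm(h_j'g_j))=\vect u\neq 0$ and the $\Svect(h_i'g_i,h_j'g_j)$ are defined. The telescoping lemma rewrites the top part as $\sum d_{ij}\Svect(h_i'g_i,h_j'g_j)$, and, exactly as in Theorem \ref{thm_s-poly=GB} via Lemma \ref{lemma_f/g} together with the identity $\Svect(h_i'g_i,h_j'g_j)=\LQ{\vect u}{\lclm(g_i,g_j)}\Svect(g_i,g_j)$, this becomes an $A$-combination of the $\Svect(g_i,g_j)$, each reducing to $0$ by hypothesis. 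Pushing these reductions back into $f$ through Lemma \ref{lemma_DivisionAlgorithm_A^p} yields a new expression of the same form with all leading monomials strictly below $\vect u$, contradicting minimality. Hence $\lm(f)=\vect u$.

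The main obstacle will be the bookkeeping in the converse, namely verifying that the $S$-vector identity $\Svect(h_i'g_i,h_j'g_j)=\LQ{\vect u}{\lclm(g_i,g_j)}\Svect(g_i,g_j)$ survives the passage from $A$ to $A^p$: one must check that $\lclm(\lm(h_i'g_i),\lm(h_j'g_j))$ equals $\vect u$ for every pair in the top part and that the module $\lclm$ behaves multiplicatively under the relevant left quotients. Once it is observed that every monomial in the top part carries the same basis vector $\vect e_{i_0}$ — forced by sharing the leading monomial $\vect u$ — the computation collapses onto that fixed $\vect e_{i_0}$-component and reduces to the algebra computation already done in Theorem \ref{thm_s-poly=GB}; the degenerate case $\lclm=0$ (distinct positions) never enters the cancellation argument.
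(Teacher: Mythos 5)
Your proposal is correct and takes exactly the route the paper intends: the paper states this theorem without giving a proof, leaving precisely the adaptation of Theorem \ref{thm_s-poly=GB} that you carry out. Your key observation --- that every monomial right-dividing the common leading monomial $\vect u$ must carry the same position index $\vect e_{i_0}$, so the relevant $\lclm$'s are nonzero and the $S$-vector identity collapses to the algebra computation --- is the only genuinely new point in the module setting, and you handle it correctly.
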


The following theorem can be proved similarly to Theorem \ref{Thm_GB-linear+basis}.
\begin{theorem} \label{thm_R-basis_A^p/N}
Let $G$ be a \gsb\ for a submodule $M$ for $A^p$. Then {$\Irr(G)$} is an $R$-basis for the left $A$-module $A^p/M$.
\end{theorem}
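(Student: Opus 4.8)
The plan is to transcribe the proof of Theorem~\ref{Thm_GB-linear+basis}, replacing the left ideal $I$ by the submodule $M$, the algebra basis $\MM$ by the module basis $\NNN$, and $A/I$ by $A^p/M$. Writing $\overline{f}=f+M$ for $f\in A^p$, I will show that the set $B=\{\overline{\vect u}:\vect u\in\Irr(G)\}$ both spans $A^p/M$ over $R$ and is $R$-linearly independent, so that it is an $R$-basis.

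First I would establish the module analogue of Lemma~\ref{lemma_representation_of_f}: every $f\in A^p$ can be written as
\[
f=\sum_{i=1}^{s}a_i\vect u_i+\sum_{j=1}^{t}b_jv_jg_j,
\]
with $a_i,b_j\in R$, $\vect u_i\in\Irr(G)$, $v_j\in\MM$, $g_j\in G$, and $\lm(f)\geq\lm(\vect u_1)>\cdots>\lm(\vect u_s)$ as well as $\lm(f)\geq\lm(v_1g_1)>\cdots>\lm(v_tg_t)$. This is the same induction on $\lm(f)$ as in the algebra case: if $\lm(f)\in\Irr(G)$ subtract $\lc(f)\lm(f)$; otherwise $\lm(f)$ is right divisible by some $\lm(g)$, so $\lm(f)=\lm(vg)$ for a monomial $v\in\MM$ (using the module counterpart of Lemma~\ref{lemma_leadingTermProduct}, namely $\lm(vg)=\lm(v\cdot\lm(g))$), and subtracting $bvg$ with $b=\lc(f)\lc(vg)^{-1}\in R$ strictly lowers the leading monomial. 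The induction terminates because $>$ is a well-ordering.

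Granting this representation, spanning is immediate: each $g_j\in G\subseteq M$, so passing to $A^p/M$ annihilates the second sum and $\overline{f}=\sum_{i=1}^{s}a_i\overline{\vect u_i}$, an $R$-combination of elements of $B$. For $R$-linear independence I would copy the argument of Theorem~\ref{Thm_GB-linear+basis}: suppose $\sum_{i=1}^{t}a_i\overline{\vect u_i}=\overline{0}$ with $a_i\in R$ and distinct $\vect u_i\in\Irr(G)$ ordered $\vect u_1>\cdots>\vect u_t$, and set $f=\sum_{i=1}^{t}a_i\vect u_i\in M$. If $a_1\neq0$ then, as the $\vect u_i$ are distinct monomials with $\vect u_1$ largest, $\lm(f)=\vect u_1\in\Irr(G)$; but $G$ being a left \gsb\ for $M$ forces $\lm(f)$ to be right divisible by $\lm(g)$ for some $g\in G$, i.e. $\lm(f)\notin\Irr(G)$, a contradiction. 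Hence $a_1=0$, and iterating gives $a_2=\cdots=a_t=0$.

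The only genuine work is the first step, the module version of Lemma~\ref{lemma_representation_of_f}; everything else is a direct translation of the algebra proof. The main obstacle is the routine bookkeeping in that step: one must check that the reduction in $A^p$ keeps coefficients in $R$ (which holds because $R$ is a field, so $\lc(vg)$ is invertible) and that a monomial not in $\Irr(G)$ is genuinely of the form $\lm(vg)$ for a single monomial $v\in\MM$, so that the subtraction $f-bvg$ is well defined and strictly decreases $\lm$. With these in hand the two parts combine to show that $B$ is an $R$-basis of $A^p/M$.
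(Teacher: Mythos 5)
Your proposal is correct and takes essentially the same route as the paper: the paper's entire proof of this theorem is the remark that it can be proved similarly to Theorem~\ref{Thm_GB-linear+basis}, and your argument carries out exactly that translation, namely a module analogue of Lemma~\ref{lemma_representation_of_f} to get spanning, followed by the leading-monomial contradiction (using the \gsb\ property of $G$) to get $R$-linear independence.
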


\section{Gelfand-Kirillov dimension of finitely generated modules}
As in the previous section, let $A=R[\s,\D;\sigma,\delta]$ be a \dd\ algebra  of type $(m,n)$, where $R$ is a finite field extension (of $k$) of degree $d$ (suppose $V=\{v_1,\ldots,v_d\}$ is a $k$-basis of $R$). Let  $l=m+n$.
In this section, we use the \gsb\ theory of $A^p$ ($p\geq 1$) developed in the previous section to compute Gelfand-Kirillov dimension of a finitely generated left $A$-module.

Let $M$ be a \fg\ left $A$-module. Since every \fg\ left $A$-module is isomorphic to a quotient module of a \fg\ free left $A$-module, we may suppose that, throughout this section, $M=A^p/N$ for some $p\geq 1$ and some submodule $N$ of $A^p$. As in the previous section, denote the standard basis of $A^p$ by $\{\vect e_1,\ldots,\vect e_p\}$.
Let $E$ be the  $k$-subspace of $M$ generated by $\{\vect e_1+N,\ldots,\vect e_p+N\}$. Then $M=AE$.
Recall that $A$ has a natural grading:
$A=\oplus_{i\in \NN}A_i$, where $A_i$ is the $k$-subspace of $A$ spanned by $\MM_i=\{X^{\alpha}:\alpha\in\NN^l, |\alpha|=i\}$,
which  induces  a grading of $M=A^p/N$, i.e.,
$M=\oplus_{i\in\NN}M_i$, where $M_i=A_{i}E$ for $i\in \NN$.
Let $\NNN_i=\{u\vect e_j:u\in\MM_i, 1\leq j\leq p\}$ and $\NNN_{\leq i}=\NNN_0 \cup\cdots\cup \NNN_i$, $i\in \NN$.
Then, similar to Proposition \ref{prop_Hilbert_function}, we have the following proposition.

\begin{proposition}\label{prop_Hilbert_function_A^p}
Let $N$ be a submodule of $A^p$ and let $G$ be a left \gsb\ for $N $ with respect to a {total degree DD-monomial ordering}. Then the following hold:

(i) The set $B_i=\{\overline{vu}:v\in V, u\in\Irr(G)\cap\NNN_{\leq i}\}$ is a $k$-basis of $M_{\leq i}=M_0\oplus\cdots\oplus M_i$,  $i\in\NN$.

(ii) The Hilbert function of  $M$ is given by
$$
\HF_{M}(i)=d\cdot |\Irr(G)\cap \NNN_{\leq i} |, i\in \mathbb{N}.
$$

\end{proposition}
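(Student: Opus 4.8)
The plan is to mimic the proof of Proposition \ref{prop_Hilbert_function} almost verbatim, replacing the standard monomials $\MM$ of $A$ with the module monomials $\NNN$ of $A^p$ and replacing $A/I$ with $A^p/N$. The grading $M=\oplus_{i\in\NN}M_i$ with $M_i=A_iE$ is already in place, and the key structural input --- that $\Irr(G)$ gives an $R$-basis of $A^p/N$ when $G$ is a left \gsb\ --- is exactly Theorem \ref{thm_R-basis_A^p/N}. So the two statements (i) and (ii) should follow by the same two-step bookkeeping argument used before.

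For part (i), I would first show that $B_i'=\{\overline u : u\in\Irr(G)\cap\NNN_{\leq i}\}$ is an $R$-basis of $M_{\leq i}=M_0\oplus\cdots\oplus M_i$ viewed as a left $R$-module. Linear independence over $R$ is immediate from Theorem \ref{thm_R-basis_A^p/N}, since $B_i'$ is a subset of the $R$-basis $\{\overline u:u\in\Irr(G)\}$ of $A^p/N$. For spanning, I would take $f\in A^p$ with $\tdeg f\leq i$ and apply the division result, Lemma \ref{lemma_DivisionAlgorithm_A^p}, together with the analogue of Lemma \ref{lemma_representation_of_f} for $A^p$, to write $\overline f=\sum_j a_j\overline{u_j}$ with $a_j\in R$, $u_j\in\Irr(G)$, and $\lm(f)\geq\lm(u_1)>\cdots$. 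Here is where the total degree hypothesis on the ordering does the essential work: because $>$ is a total degree DD-monomial ordering, $\tdeg u_j\leq\tdeg f\leq i$, so each $\overline{u_j}\in B_i'$ and hence $B_i'$ spans $M_{\leq i}$. Passing from the $R$-basis $B_i'$ to the $k$-basis $B_i=\{\overline{vu}:v\in V,\ u\in\Irr(G)\cap\NNN_{\leq i}\}$ is then automatic, since $V$ is a $k$-basis of $R$.

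For part (ii), the Hilbert function counts $\HF_M(i)=\dim_k(M_{\leq i})=|B_i|$, and since $B_i$ is indexed by pairs $(v,u)$ with $v\in V$ and $u\in\Irr(G)\cap\NNN_{\leq i}$, I need only verify that distinct such pairs give distinct basis elements, i.e.\ that $|B_i|=d\cdot|\Irr(G)\cap\NNN_{\leq i}|$. This reduces to showing the map $u\mapsto\overline u$ is injective on $\Irr(G)\cap\NNN_{\leq i}$: if $u\neq v$ lie in $\Irr(G)$ with $\overline u=\overline v$, say $u>v$, then $0\neq u-v\in N$ forces $u=\lm(u-v)\notin\Irr(G)$ by the \gsb\ property, a contradiction.

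The argument is essentially routine, so I do not expect a genuine obstacle; the only point requiring care is the module-theoretic analogue of Lemma \ref{lemma_representation_of_f}, which the excerpt supplies only through the division statement Lemma \ref{lemma_DivisionAlgorithm_A^p} rather than as a standalone representation lemma. I would either invoke that division lemma directly and track leading monomials by hand, or remark that the representation lemma for $A^p$ is proved identically to Lemma \ref{lemma_representation_of_f} by induction on $\lm(f)$. Everything else --- linear independence, the total degree control, and the counting bijection --- transfers word for word from the cyclic case, so the proof can simply point to Proposition \ref{prop_Hilbert_function} and indicate the substitutions $\MM\leadsto\NNN$, $A/I\leadsto A^p/N$, and Theorem \ref{Thm_GB-linear+basis}$\leadsto$Theorem \ref{thm_R-basis_A^p/N}.
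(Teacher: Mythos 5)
Your proposal is correct and takes essentially the same approach the paper intends: the paper states this proposition with no separate proof, remarking only that it holds ``similar to Proposition \ref{prop_Hilbert_function}'', and your argument is precisely that transfer, with the substitutions $\MM\leadsto\NNN$, $A/I\leadsto A^p/N$, and Theorem \ref{Thm_GB-linear+basis}$\leadsto$Theorem \ref{thm_R-basis_A^p/N}. Your one point of care --- that the module analogue of Lemma \ref{lemma_representation_of_f} must be supplied, either via Lemma \ref{lemma_DivisionAlgorithm_A^p} or by repeating the induction on $\lm(f)$ --- is exactly the only step the paper leaves implicit.
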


For a monomial $X^{\alpha}\vect e_i$ and $q\in\NN$, define $\topp_q(X^{\alpha}\vect e_i)=\topp_q(X^{\alpha})$ and $\sh_q(X^{\alpha}\vect e_i)=\sh_q(u)\vect e_i$.

The following theorem gives a relation between the Gelfand-Kirillov dimension and a finite \gsb\ for a \fg\ module over a \dd\ algebra.

\begin{theorem}\label{thm_GK=degHilbertPoly-A^p}
Let $R$ be a finite field extension of $k$ and $A=R[\s,\D;\sigma,\delta]$ be a \dd\ algebra.
Let $N$ be a submodule of the free $A$-module $A^p$ ($p\geq 1$), and $G$ be a left \gsb\ for $N$ with respect to a total degree DD-monomial ordering.
Denote the left $A$-module $A^p/N$ by $M$.
Set $$q=\max\{{\tdeg_{x_i}}(\lm(g)):g\in G, 1\leq i\leq l\}.$$
Then the following hold:

(i) There exists a unique polynomial $h\in\QQ[x]$ such that the Hilbert function $\HF_{M}$ of $M$ satisfies $\HF_{M}(t)=h(t)$ for all $t\geq lq$.

(ii) The Gelfand-Kirillov dimension of  $M$ is equal to the degree of $h$, which is given by
$$
\GK(M)=\deg h=\max\{|\topp_q(u)|:u\in \Irr(G)\cap \NNN_{\leq t}, \sh_q(u)=u\}, \ t\geq lq.
$$
\end{theorem}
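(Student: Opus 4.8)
The plan is to follow, almost verbatim, the argument that established the cyclic case in Theorem~\ref{thm_GK=degHilbertPoly}, replacing the monomial set $\MM$ by the set $\NNN$ of module monomials and the Hilbert-function formula of Proposition~\ref{prop_Hilbert_function} by its module analog Proposition~\ref{prop_Hilbert_function_A^p}. Fix $t\in\NN$ with $t\geq lq$ and set $W=\Irr(G)\cap\NNN_{\leq t}$. By Proposition~\ref{prop_Hilbert_function_A^p}(ii) we have $\HF_M(t)=d\cdot|W|$, so the whole problem reduces to counting $W$ and exhibiting the count as a polynomial in $t$ of the claimed degree.

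First I would transport the shaving and topping machinery of Lemma~\ref{lemma_equi_relation_on_W} from $\MM$ to $\NNN$. Because $\topp_q$ and $\sh_q$ act only on the exponent vector of a module monomial $X^{\alpha}\vect e_i$ and leave the position index $i$ untouched, and because right divisibility in $A^p$ requires equality of position indices, the relation $\sim_q$ defined by $u\sim_q v$ iff $\sh_q(u)=\sh_q(v)$ is again an equivalence relation on $W$, and $W_q=\{u\in W:\sh_q(u)=u\}=\{X^{\alpha}\vect e_i\in W:\alpha_s\leq q,\ 1\leq s\leq l\}$ is a finite system of normal forms. The essential point to verify is that $W$ is closed under $\sh_q$ and that, for each normal form $u=X^{\alpha}\vect e_i\in W_q$, the class $[u]_q$ equals the full set of $X^{\beta}\vect e_i$ with $\alpha\leq\beta$, $\beta_s=\alpha_s$ for $s\notin\topp_q(u)$, and $\tdeg(X^{\beta}\vect e_i)\leq t$. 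This rests on a single divisibility fact: since $q$ bounds every $\tdeg_{x_s}(\lm(g))$ and since $\alpha_s=q$ for $s\in\topp_q(u)$, any $\lm(g)=X^{\gamma}\vect e_j$ right-dividing such an $X^{\beta}\vect e_i$ forces $j=i$ and $\gamma\leq\alpha$, hence already right-divides $u$; so raising exponents on the topped coordinates (keeping the same $\vect e_i$) preserves irreducibility. Granting this, each class is counted by $|[u]_q|=\binom{t-\tdeg(u)+|\topp_q(u)|}{|\topp_q(u)|}$, which by Lemma~\ref{lemma_RationalPoly} is a rational polynomial in $t$ of degree $|\topp_q(u)|$ with positive leading coefficient.

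Summing over the finitely many normal forms gives $|W|=\sum_{u\in W_q}\binom{t-\tdeg(u)+|\topp_q(u)|}{|\topp_q(u)|}$, so
$$
h(x)=d\cdot\sum_{u\in W_q}\binom{x-\tdeg(u)+|\topp_q(u)|}{|\topp_q(u)|}
$$
is a rational polynomial with $\HF_M(t)=h(t)$ for all $t\geq lq$; the hypothesis $t\geq lq$ guarantees that every normal form (whose total degree is at most $lq$) already lies in $\NNN_{\leq t}$, so that $W_q$, and hence $h$, is independent of $t$. Uniqueness of $h$ is immediate, since two polynomials agreeing at infinitely many integers coincide. This proves (i). For (ii), the positivity of the leading coefficients means the top-degree terms cannot cancel, so $\deg h=\max\{|\topp_q(u)|:u\in W_q\}$; since $h(t)\sim c\,t^{\deg h}$ we obtain $\overline{\lim_{t\to\infty}}\log_t\HF_M(t)=\deg h$, and combining with Lemma~\ref{lemma_Hilbet_GK} yields $\GK(M)=\deg h$ as stated.

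The main obstacle I anticipate is not the counting, which is routine once set up, but the bookkeeping needed to confirm that the position component $\vect e_i$ behaves inertly throughout---under divisibility, shaving, topping, and the irreducibility-preservation step---so that the module computation genuinely decouples into $p$ copies of the cyclic computation indexed by $\vect e_1,\dots,\vect e_p$. Once this is verified, the argument is formally identical to that of Theorem~\ref{thm_GK=degHilbertPoly}.
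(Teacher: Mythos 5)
Your proof is correct and follows essentially the same route as the paper's own argument, which simply carries the cyclic-case proof of Theorem \ref{thm_GK=degHilbertPoly} over to $\NNN$ exactly as you do (shaving/topping transported to module monomials, classes counted by $\binom{t-\tdeg(u)+|\topp_q(u)|}{|\topp_q(u)|}$, and $t\geq lq$ ensuring the set of normal forms is independent of $t$). One remark in your favor: your constant $d$ in $h(x)=d\cdot\sum_{u}\binom{x-\tdeg(u)+|\topp_q(u)|}{|\topp_q(u)|}$ is the correct one, since the normal forms $u$ range over module monomials in $\Irr(G)$ that already carry their position index $\vect e_i$; the factor $pd$ in the paper's displayed formula double-counts by $p$ (though this discrepancy affects neither $\deg h$ nor the conclusion $\GK(M)=\deg h$).
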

\begin{proof}
(i) Let $t\in \NN$ and $W_p=\{u\in \Irr(G):\tdeg(u)\leq t, \sh_q(u)=u\}$.
Similar to Theorem \ref{thm_GK=degHilbertPoly}, one can show that
\[
\HF_{M}(t)=h(t)=pd\cdot \sum_{u\in W_p}{{t-\tdeg(u)+|\topp_q(u)|\choose {|\topp_q(u)|}}},
\]
which is a rational polynomial of degree $\max\{|\topp_q(u)|:u\in W_p\}$.

(ii) It follows from (i).
\end{proof}

Now we can easily write an algorithm from Theorem \ref{thm_GSB_criterion-A^p} and Theorem \ref{thm_GK=degHilbertPoly-A^p}.

\begin{algorithm}[GK-dimension of f.g. Modules over Differential Difference Algebras]
\upshape\

Input: $F=\{f_1,\ldots,f_s\}\subset A^p$, $s\in\NN$, each $f_i\neq 0$.

Output: $\GK(A^p/N)$, where $N$ is the left submodule of $A^p$ generated by $F$.

Initialization: $G:=F$, $P:=\{\{p,q\}:p\neq q, p,q\in F\}$.

WHILE $P\neq\emptyset$ DO

\hspace{12mm} Choose any pair $\{p,q\}\in P$, $P:=P\setminus \{\{p,q\}\}$

\hspace{12mm} $r:=$ a remainder of $\Svect(p,q)$ modulo $G$

\hspace{12mm} IF $r\neq 0$ THEN

\hspace{12mm} $G:=G\cup \{r\}$, $P:=P\cup \{\{g,r\}: g\in G\}$

END DO

$q:=\max\{{\tdeg_{x_i}}(\lm(g)):g\in G, 1\leq i\leq l\}$

Return $\max\{|\topp_q(u)|:u\in \Irr(G)\cap \NNN_{\leq t}, \sh_q(u)=u\}$

\end{algorithm}

\section*{Acknowledgements.} This work is supported in part by  the National Sciences and Engineering
Research Council (NSERC) of Canada and URGP from University of Manitoba.


\begin{thebibliography}{99}

\bibitem{adams1994introduction}
W.~W. Adams and P.~Loustaunau.
\newblock {\em An introduction to Gr{\"o}bner bases},   {\em Graduate
  studies in mathematics, Vol. 3}.
\newblock AMS, 1994.

\bibitem{Becker-Weispfenning_1993}
T.~Becker and V.~Weispfenning.
\newblock {\em {G}r\"obner bases: a computational approach to commutative
  algebra},  {\em Graduate texts in mathematics, Vol. 141}.
\newblock Springer-Verlag, 1993.

\bibitem{bell1988uniform}
A.~D. Bell and K.~R. Goodearl.
\newblock Uniform rank over differential operator rings and
  {P}oincar{\'e}-{B}irkhoff-{W}itt extensions.
\newblock {\em Pacific J. Math}, \textbf{131}(1988), 13--37.

\bibitem{bergman1978diamond}
G.~M. Bergman.
\newblock The diamond lemma for ring theory.
\newblock {\em Adv. Math}, \textbf{29}(1978), 178--218.

\bibitem{bokut1976embeddings}
L.~A. Bokut.
\newblock Embeddings into simple associative algebras.
\newblock {\em Algebra and Logic}, \textbf{15}(1976), 73--90.

\bibitem{BokutChenHuang2013Lalgebra}
L.~A. Bokut, Yuqun Chen, and Jiapeng Huang.
\newblock Gr{\"o}bner-{S}hirshov bases for {L}-algebras.
\newblock {\em International Journal of Algebra and Computation}, \textbf{23}(2013), 547--571.

\bibitem{BokutChenLiu2010Dialgebra}
L.~A. Bokut, Yuqun Chen, and Cihua Liu.
\newblock {G}r{\"o}bner-{S}hirshov bases for dialgebras.
\newblock {\em International Journal of Algebra and Computation}, \textbf{20}(2010), 391--415.

\bibitem{BokutChenMo2013Semiring}
L.~A. Bokut, Yuqun Chen, and Qiuhui Mo.
\newblock Gr{\"o}bner-{S}hirshov bases for semirings.
\newblock {\em Journal of Algebra}, \textbf{385}(2013), 47--63.



\bibitem{BokutChenShumSurvey2010}
L.~A. Bokut, Y.~Chen and K.~P. Shum.
\newblock Some new results on Groebner-Shirshov bases.
In: \newblock {\em Proceedings of international conference on algebra 2010--Advances in algebraic structures}, 2012, 53--102. 


\bibitem{Bueso2001computing}
J.~L. Bueso, J.~G{\'o}mez-Torrecillas, F.~J. Lobillo, et~al.
\newblock Computing the {G}elfand-{K}irillov dimension {II}.
\newblock {\em Ring theory and algebraic geometry (Le\'on, 1999), Lecture notes in pure and applied mathematics, vol. 221},
  Dekker, New York, 2001, 33--58.

\bibitem{Bueso+Jimenez+Jara_1997}
J.~L. Bueso, F.~J. Jim{\'e}nez, and P.~Jara.
\newblock Effective computation of the {G}elfand-{K}irillov dimension.
In: \newblock {\em Proceedings of the Edinburgh mathematical society, Series 2},
  \textbf{40}(1997), 111--117.

\bibitem{ChenChen2012Matabelian}
Yongshan Chen and Yuqun Chen.
\newblock {G}r{\"o}bner-{S}hirshov bases for matabelian {L}ie algebras.
\newblock {\em Journal of Algebra}, \textbf{358}(2012), 143--161.

\bibitem{cox2007ideals}
D.~A. Cox, J.~Little, and D.~O'Shea.
\newblock {\em Ideals, varieties, and algorithms: an introduction to
  computational algebraic geometry and commutative algebra}.
\newblock Springer, 2007.

\bibitem{torrecillas1999gelfand}
J.~G{\'o}mez-Torrecillas.
\newblock {G}elfand-{K}irillov dimension of multi-filtered algebras.
\newblock In: {\em Proceedings of the Edinburgh mathematical society}, {\bf 42}(1999), 155--168.


\bibitem{Weispfenning1990non}
A.~Kandri-Rody and V.~Weispfenning.
\newblock Non-commutative {G}r{\"o}bner bases in algebras of solvable type.
\newblock {\em Journal of Symbolic Computation}, \textbf{9}(1990), 1--26.

\bibitem{kassel1995quantum}
C.~Kassel.
\newblock {\em Quantum groups},  {\em Graduate texts in
  mathematics, Vol. 155}.
\newblock Springer-Verlag, 1995.

\bibitem{Krause-Lenagan_2000}
G.~Krause and T.~Lenagan.
\newblock {\em Growth of algebras and {G}elfand-{K}irillov dimension},
   {\em Graduate studies in mathematics, Vol. 22}.
\newblock AMS, 2000.


\bibitem{levandovskyy2003plural}
V.~Levandovskyy and H.~Sch{\"o}nemann, {{P}lural: a computer algebra
  system for noncommutative polynomial algebras}. In: {\it Proceedings of the 2003
  international symposium on symbolic and algebraic computation}, ACM, 2003,
  176--183.


\bibitem{li2002noncommutative}
H.~Li.
\newblock {\em Noncommutative {G}r{\"o}bner bases and filtered-graded
  transfer},  {\em Lecture notes in mathematics, Vol. 1795}.
\newblock Springer, 2002.


\bibitem{mansfield2003elimination}
E.~L. Mansfield and A.~Szanto.
\newblock Elimination theory for differential difference polynomials.
\newblock In: {\em Proceedings of the 2003 international symposium on symbolic
  and algebraic computation}, ACM, 2003, 191--198.

\bibitem{McConnell-Robson2001}
J.~C. McConnell, J.~C. Robson, and L.~W. Small.
\newblock {\em Noncommutative {N}oetherian rings},   {\em Graduate
  studies in mathematics, Vol. 30}.
\newblock AMS, 2001.

\bibitem{rowen1988ring}
L.~H. Rowen.
\newblock {\em Ring theory},  {\em Pure and applied mathematics, Vol. 127}.
\newblock Academic Press, Inc., 1988.


\end{thebibliography}

\end{document}